\newcommand\cyr
\renewcommand\rmdefault{wncyr}
\renewcommand\sfdefault{wncyss}
\renewcommand\encodingdefault{OT2}
\DeclareTextFontCommand{\textcyr}{\cyr}
\newlength{\tmpl}
\newcommand{\thick}{1.5pt}
\newcommand{\dd}[1]{d#1}
\DeclareSymbolFont{yhlargesymbols}{OMX}{yhex}{m}{n}
\DeclareMathAccent{\widetriangle}{\mathord}{yhlargesymbols}{"E6}
\numberwithin{equation}{section}
\newcommand{\onetwo}[1]{\hat{1}_2^{#1}} 
\DeclareMathOperator{\diag}{\mathrm{diag}}
\DeclareMathOperator{\Tr}{\mathrm{Tr}}
\let\Im\undefined
\DeclareMathOperator{\Im}{\mathrm{Im}}
\newcommand{\Krewl}[1]{{#1}^c}  
\newcommand{\harpleftsign}{\scriptstyle\leftharpoonup}
\newcommand{\harpleft}[2]{%
  \ifx\displaystyle#1\doalign{$\harpleftsign$}{#1#2}\fi
  \ifx\textstyle#1\doalign{$\harpleftsign$}{#1#2}\fi
  \ifx\scriptstyle#1\doalign{\scalebox{.6}[.9]{$\harpleftsign$}}{#1#2}\fi
  \ifx\scriptscriptstyle#1\doalign{\scalebox{.5}[.8]{$\harpleftsign$}}{#1#2}\fi
}
\newcommand{\harprightsign}{\scriptstyle\rightharpoonup}
\newcommand{\harpright}[2]{%
  \ifx\displaystyle#1\doalign{$\harprightsign$}{#1#2}\fi
  \ifx\textstyle#1\doalign{$\harprightsign$}{#1#2}\fi
  \ifx\scriptstyle#1\doalign{\scalebox{.6}[.9]{$\harprightsign$}}{#1#2}\fi
  \ifx\scriptscriptstyle#1\doalign{\scalebox{.5}[.8]{$\harprightsign$}}{#1#2}\fi
}
\newcommand{\doalign}[2]{%
 {\vbox{\offinterlineskip\ialign{\hfil##\hfil\cr#1\cr$#2$\cr}}}%
}
\newcommand{\ED}[1][{}]{E^{\mathcal D}_{#1}}
\newcommand{\EB}[1][{}]{E^{\odot}_{#1}}
\def\R{{\mathbb R}}
\def\C{{\mathbb C}}
\def\IR{{\mathbb R}}
\def\IZ{{\mathbb Z}}
\def\IN{{\mathbb N}}
\def\N{{\mathbb N}}
\def\<{\langle}
\def\>{\rangle}
\def\P{{ P_n}}
\def\state{\varphi} 
\def\Q{{\mathnormal Q}}
\def\J{J_n}
\def\X{{\mathnormal X}}
\def\Y{{\mathnormal Y}}
\def\A{\mathcal{A}}
\def\BL{{{\leftthreetimes}}_\pi}
\def\BR{{{\rightthreetimes}}_\pi}
\newcommand{\abs}[1]{
  \left\lvert
    #1
  \right\rvert}
\newcommand{\bigabs}[1]{
  \bigl\lvert
    #1
  \bigr\rvert}
\DeclareMathOperator{\SG}{\mathfrak{S}} 
\DeclareMathOperator{\NC}{\mathcal{I}} %
\DeclareMathOperator{\ICop}{\mathcal{IC}}  
\DeclareMathOperator{\Sing}{\normalfont{Sing}} %
\newtheorem{th-def}{Theorem-Definition}[section]
\newtheorem{theo}{Theorem}[section]
\newtheorem{lemm}[theo]{Lemma}
\newtheorem{prop}[theo]{Proposition}
\newtheorem{cor}[theo]{Corollary}
\theoremstyle{definition}
\newtheorem{exam}[theo]{Example}
\newtheorem{Rem}[theo]{Remark}
\def\cvput#1[#2]{\pnode(#1,1){#1} \pscircle*(#1,1){.1} \rput(#1,.5){$#2$}}
\title{The Boolean quadratic forms and tangent law}
\author[Wiktor Ejsmont]{Wiktor Ejsmont}
\author[Patrycja H\k{e}{\'c}ka]{Patrycja H\k{e}{\'c}ka}
\address [Wiktor Ejsmont]{ 
Department of Telecommunications and Teleinformatics, Wroclaw University
of Science and Technology\\
Wybrze\.ze Wyspia\'nskiego 27, 50-370 Wroc\l aw, Poland}
\email{wiktor.ejsmont@gmail.com}
\address [Patrycja H\k{e}{\'c}ka]{Department of Telecommunications and Teleinformatics, Wroclaw University
of Science and Technology\\
Wybrze\.ze Wyspia\'nskiego 27, 50-370 Wroc\l aw, Poland}
\email{patrycja.hecka@gmail.com}
\subjclass[2010]{Primary: 46L54. Secondary: 62E10.}
\thanks{ Wiktor
Ejsmont This research was funded in part by Narodowe Centrum Nauki, Poland WEAVE-UNISONO grant 2022/04/Y/ST1/00008.}
\keywords{ Boolean infinite divisibility, central limit theorem,
   tangent numbers, Euler numbers, zigzag numbers, cotangent sums}
\begin{document}
\begin{abstract}   
In  \cite{EjsmontLehner:2020:tangent} we study the limit sums of free commutators and anticommutators and show that the generalized tangent function 
$$
\frac{\tan z}{1-x\tan z}
$$
describes the limit distribution. 
This is the generating function
of the higher order tangent  numbers of
Carlitz and Scoville 
\cite[(1.6)]{CarlitzScoville:1972}
which arose in connection with the enumeration of certain permutations. 
In the present paper we continue to study the limit of weighted  sums of Boolean
commutators and  anticommutators and we  
 show that the shifted generalized tangent  function
appears in  a limit theorem. In order to do this, we shall provide an arbitrary  cumulants formula of the quadratic form. We also apply this result to obtain several results in   a Boolean probability theory. 

\end{abstract}

\date{
  Rev.\SVNRevision,
  \today}

\setlength{\parindent}{0pt}

\maketitle


\section{Introduction}

Free probability was introduced by Voiculescu 30 years ago
\cite{Voiculescu:1986,VoiculescuDykemaNica:1992} in order to solve
some problems in von Neumann algebras of free groups. It has developed into a
whole new field with numerous connections to 
different branches of mathematics such as classical probability,
combinatorics and analysis, in particular random matrices
\cite{Voiculescu:1991}, noncrossing partitions
\cite{NicaSpeicher:2006} and operator algebras. Ejsmont and Lehner ~\cite{EjsmontLehner:2020:tangent}  presented  general free limit theorems   and that article was the starting point
for the present paper.

Speicher and  Woroudi \cite{SpeicherWorudi1995}  introduced another concept of probability, namely Boolean probability.   
In recent years, a number of papers \cite{SpeicherWorudi1995,Lehner2002,Anshelevich2009,Franz2009,Liu2015,Liu2015b, PopaHao2019} have investigated 
theorems for the Boolean convolution of probability measures.  
The key concept of this definition is the notion of noncommutative
Boolean  independence.
As in classical probability where the concept of independence gives rise to classical convolution, the concept of Boolean independence leads to another operation on the
measures on the real line. 
In this paper we show that the limit of mixed sums of commutators and  anti-commutators \begin{align}
\label{eq:sumcommanticomm}
  \frac{\sum_{k<l} a(X_kX_l+X_lX_k) + ib(X_kX_l-X_lX_k)}{n} 
\end{align} in  Boolean probability can be described by the function $$\frac{1}{z}\frac{\tan(bz)}{b-a\tan(bz)}-1.$$
This will follow from a general  limit theorem for arbitrary quadratic forms.
In the way of proof we derive an explicit formula for the Boolean  cumulants of quadratic forms Proposition \ref{prop:CykliczneVariancja},
 which exhibits an interesting
connection to the isomorphism between  interval partitions of $r+1$ elements, with special kind of interval partitions of $2r$ points.

We apply this result to obtain several theorems about quadratic forms  in  Boolean random variables.  In particular, we establish a new proof of the cancellation phenomenon for sample variance and a Boolean version of the $\chi^2$-conjecture. 
Also, among other applications we establish a Boolean analogue of classical theorem of Kagan and Shalaevski \cite{KaganShalaevski}.

It is worth to mention that in Theorem  \ref{lem:equvalencecancellation}, we characterize all quadratic forms with cancellation phenomenon property for i.i.d. random variables. In the solution of this problem, the  zero row sum matrices appear. Such matrices appear 
in mathematics from time to time, for example when studying the Markov chains as $T-I$   where $T$ is the transition matrix for
a finite-state Markov chain. Just as importantly, they appear as the generators for continuous-time Markov
chains  \cite[Ch. 8]{Helms}.
In graph theory, as the Laplacian matrices, they arise in the tree theorems based on the approaches of Kirchoff and Tutte for counting spanning trees, \cite{Bapat,Evans}.

\section{Preliminaries}
\label{sec:prelim}

\subsection{Basic Notation and Terminology}
A  noncommutative probability space is a pair $(\mathcal{A},\state)$
where $\mathcal{A}$ is  a unital $\ast$-algebra, and  $\state:\mathcal{A} \to
\mathrm{\C}$ is a normal state on $\mathcal{A} $.

The elements $\X\in{\mathcal{A}}$
are called (noncommutative) random variables. In the present paper
all random variables are assumed to be self-adjoint.
Given a noncommutative random variable $\X\in{\mathcal{A}}_{sa}$,
the spectral theorem provides a unique probability measure $\mu_X$ on 
$\R$ which encodes the distribution of $\X$ in the state $\state$,
i.e., 
 $\state( f(\X))=\int_\R f(\lambda)\,\dd{\mu_\X(\lambda)}$
for any bounded Borel function $f$ on $\R$.

\subsection{Boolean Independence}
A family of algebras $\left(\mathcal{A}_i\right)_{i\in I}$ 
of $\mathcal{A}$ 
is called \emph{Boolean} independent
 if $\state(\X_{1} \dots \X_{n} ) = \state(\X_{1}) \dots \state(\X_{n} )$, whenever  $\X_{j} \in \mathcal{A}_{i(j)}$ with $i(1)\neq i(2)\neq \dots \neq i(n)$.
Random variables $\X_{1},\dots ,\X_{n} $  are Boolean independent  if the
subalgebras they generate are Boolean independent. 
This kind of product can be traced back to the work of Bo\.zejko \cite{Bozejko1986}. 
For more details about the Boolean probability theory 
see the standard references
\cite{BozejkoLeinertSpeicher,SpeicherWorudi1995,Lehner2002,Anshelevich2009,Franz2009,Liu2015,Liu2015b, PopaHao2019}.

\subsection{Boolean Convolution and the Cauchy-Stieltjes Transform}
It can be shown that the joint distribution of Boolean random variables $X_i$
is uniquely determined by the distributions of the individual random variables
$X_i$ and therefore the operation of \emph{Boolean convolution} is well defined.
Let $\mu$ and $\nu$ be probability measures on $\R$, and
$\X,\Y$ self-adjoint Boolean random variables with respective distributions
$\mu$ and $\nu$.
The distribution of $\X+\Y$ is called the Boolean additive convolution of $\mu$
and $\nu$ and it is denoted by $\mu \uplus \nu$.
The analytic approach to Boolean  convolution is based on the Cauchy transform
\begin{align}
\label{eq:hm:ball}
G_\mu(z)=\int_{\R}\frac{1}{z-y}\,\dd{\mu(y)}
\end{align}
of a probability measure $\mu$. The Cauchy transform is analytic on the upper half
plane $\C^+=\{z |\Im z>0\}$ 
and takes values from the closed lower half plane
 $\C^-=\{z |\Im z<0\}$. 
For measures with compact support, the Cauchy transform is analytic at infinity
and related to the ordinary  moment generating function $M_{\X}$ as follows:
\begin{align}\label{mgf}
M_{\X}(z)=\sum_{n=0}^{\infty}\,\state(\X^n)\,z^n = \frac{1}{z}\,G_\X (1/z).
\end{align}

Moreover, the  moment generating transform 
can be written as
$$
M_\mu(z) = \frac{1}{1-H_\mu(z)}  ,
$$
where $H_\mu(z)$ is analytic in a neighbourhood of zero.
The coefficients of its series expansion
\begin{align} \label{rtr}
H_{\X}(z)=\sum_{n=1}^{\infty}\,K_{n}(\X)z^n
\end{align} 
are called \emph{Boolean  cumulants} of the random variable $\X$. 
Using this, the Boolean convolution can be computed via the identity
\begin{align} \label{Booleanconv}
  H_{\mu\uplus\nu}(z)=H_\mu(z)+H_\nu(z)
  ,
\end{align}
see~\cite{SpeicherWorudi1995}.

\subsection{Boolean infinite divisibility}
\label{ssec:FID}
In analogy with classical probability,
a probability measure $\mu$ on $\R$ is said to be
\emph{Boolean infinitely divisible} (or BID for short)
if for each $n \in \{1, 2, 3, \dots \}$ there exists a probability measure
$\mu_n$ such that
$\mu=
 \mu_n\uplus\mu_n\uplus\dots\uplus\mu_n
$ ($n$-fold Boolean convolution).
Boolean infinite divisibility of a measure $\mu$ is characterized
by the property that its \emph{self-energy transform} $\phi_\mu(z)=zH_{\mu}(1/z)$ has
a Nevanlinna-Pick representation
\cite{SpeicherWorudi1995}
\begin{equation}
 \phi_{\mu}(z)=
  \gamma+\int_{\R}\frac{1+xz}{z-x}\,\dd\rho({x})
\end{equation}
for some $\gamma \in \R$ and some nonnegative finite measure $\rho$.
This  $\rho$ is called the \emph{Boolean L\'evy measure} of $\mu$.
It is worth to mention that all probability measures on $\R$ are Boolean infinitely divisible.

\subsection{Interval Partitions}
We recall some facts about interval partitions. 

Let $S$ be an ordered set. Then $\pi = \{ V_1,\dots, V_p\}$ is a partition of $S$, if the $V_i \neq \emptyset$ are
ordered and disjoint sets $V_i=(v_1,\dots,v_k)$, where $v_1<\dots<v_k$, whose union is $S$. We write $V \in \pi$ if $V$ is a class of $\pi$ and we say that $V$ is a \emph{block of $\pi$}.
 Any partition $\pi$ defines an equivalence relation on $S$,
denoted by $\sim_\pi$, such that the equivalence classes are the blocks $\pi$. 
That is, $i\sim_\pi j$ if $i$ and $j$ belong to the same block of $\pi$.
A block $V$  of a partition $\pi$ is called an \emph{interval} block if $V$ is of the form $V=(k, k+1,\dots, k+l)$ for $k\geq 1$ and $0 \leq l \leq n -k$. 
 A block of $\pi$ is called a \emph{singleton} if it consists of one element.  
 Let $\Sing(\pi)$   denote the set of all singletons of $\pi$.
 
A partition $\pi$ is called \emph{interval partition} if its every block is an interval.
The set of interval partitions of $S$ is denoted by $\NC(S)$,
in the case where $S=[n]:=\{1, \dots , n\}$ we write
$\NC(n):=\NC([n])$. 
$\NC(n)$ is a lattice under refinement order, where we say $\pi\leq \rho$ if
every block of $\pi$ is contained in a block of $\rho$.

The maximal element of $\NC(n)$ under this order is the partition consisting
of only one block and it is denoted by  $\hat{1}_{n}$.
On the other hand, the minimal element $\hat{0}_n$ 
is the unique partition whose every block is a singleton.
Sometimes it is convenient to visualize partitions as diagrams, for example
$\hat{1}_n=
\begin{picture}(25,6.5)(1,0)
  \put(1,0){\line(0,1){7.5}}
  \put(6,0){\line(0,1){7.5}}
  \put(8,0){$\cdots$}
  \put(25,0){\line(0,1){7.5}}
  \put(1,7.5){\line(1,0){24}}
\end{picture}
$
,  $\hat{0}_n=
\begin{picture}(26,3.5)(1,0)
  \put(2,0){\line(0,1){4.5}}
  \put(8,0){\line(0,1){4.5}}
\put(10,0){$\cdots$}
  \put(26,0){\line(0,1){4.5}}
  \put(2,4.5){\line(1,0){0}}
  \put(8,4.5){\line(1,0){0}}
  \put(14,4.5){\line(1,0){0}}
  \put(20,4.5){\line(1,0){0}}
  \put(26,4.5){\line(1,0){0}}
\end{picture}$. Two specific  partitions will play a particularly important role,
namely the \emph{standard matching}
$\onetwo{n}=
\begin{picture}(44,3.5)(1,0)
  \put(2,0){\line(0,1){4.5}}
  \put(8,0){\line(0,1){4.5}}
  \put(14,0){\line(0,1){4.5}}
  \put(20,0){\line(0,1){4.5}}
\put(21.5,0){$\cdots$}
  \put(38,0){\line(0,1){4.5}}
  \put(44,0){\line(0,1){4.5}}
  \put(2,4.5){\line(1,0){6}}
  \put(14,4.5){\line(1,0){6}}
  \put(38,4.5){\line(1,0){6}}
\end{picture}\in\NC(2n)$ 
and  $$
\begin{picture}(56,6.5)(1,0)
  \put(2,0){\line(0,1){4.5}}
  \put(8,0){\line(0,1){4.5}}
  \put(14,0){\line(0,1){4.5}}
  \put(20,0){\line(0,1){4.5}}
  \put(26,0){\line(0,1){4.5}}
  \put(44,0){\line(0,1){4.5}}
  \put(50,0){\line(0,1){4.5}}
  \put(56,0){\line(0,1){4.5}}
\put(28.0,0){$\cdots$}
  \put(8,4.5){\line(1,0){6}}
  \put(20,4.5){\line(1,0){6}}
  \put(44,4.5){\line(1,0){6}}
\end{picture}=\{(1),(2,3),(4,5),\dots, (2n-2,2n-1) ,(2n)\}\in\NC(2n).$$
A partition $\pi$ is called \emph{noncrossing} if different blocks do not interlace, i.e., there is no quadruple of elements $i<j<k<l$ such that $i\sim_\pi k$ and $j\sim_\pi l$  but $i\not\sim_\pi j$. 
The set of non-crossing partitions of $n$ is denoted by $NC(n)$.

Given an interval partition $\pi$ of $\{1,2,\dots,n\}$,
an \emph{interval complement $\Krewl\pi$  of $\pi\in \NC(n)$}  is the maximal
noncrossing partition of the ordered set $\{\bar{1},\bar{2},\dots,\bar{n}\}$
such that $\pi\cup\Krewl\pi$ is a noncrossing partition of
the interlaced set
$\{\bar{1},1,\bar{2},2,\dots,\bar{n},n\}$. 
 The set of interval complement partitions of $\NC(n)$ is denoted by $\ICop(n)$ and the map $\Krewl {} : \NC(n) \to \ICop(n)$ is called the complement map. 
 Let us observe that   $i\sim_{\pi^c} k$, then $1\sim_{\pi^c} i$  and $1\sim_{\pi^c} k$, which 
means that only the first block may be bigger than one  and all other blocks are singletons; see  Figure \ref{fig:examplepartition}.

For $\pi^c \in \ICop(n) $ we denote by $\overline {\pi^{c}}$ its noncrossing
closure, that is the largest noncrossing partition such that  
this  partition is
obtained from $\pi^c$ by taking unions of all singletons 
except the first one 
in the graphical representation, as
in the example shown in Figure \ref{fig:examplepartition} (a similar definition appeared in \cite[Definition 2.3]{AHLV2015}).
\begin{Rem}
Kreweras \cite{Kreweras:1972} discovered an interesting antiisomorphism
of the noncrossing partition, now called the \emph{Kreweras complementation map}. The above definition of interval complement   was adopted from Kreweras \cite{Kreweras:1972}.   
\end{Rem}
\begin{figure}[h]
\begin{equation*}
  \begin{array}{ccccc}
\begin{picture}(32,6.5)(1,0)
   \put(0,8){\line(0,1){4.5}}
  \put(6,8){\line(0,1){4.5}}
  \put(12,8){\line(0,1){4.5}}
  \put(18,8){\line(0,1){4.5}}
  \put(24,8){\line(0,1){4.5}}
  \put(30,8){\line(0,1){4.5}}
  \put(18,12.5){\line(1,0){12}}
  \put(0,12.5){\line(1,0){12}}
\end{picture} &    \to & 
\begin{picture}(32,6.5)(1,0)
   \put(0,8){\line(0,1){8}}
  \put(6,8){\line(0,1){4.5}}
  \put(12,8){\line(0,1){4.5}}
  \put(18,8){\line(0,1){8}}
  \put(24,8){\line(0,1){4.5}}
  \put(30,8){\line(0,1){4.5}}
  \put(0,16){\line(1,0){18}}
\end{picture}
 & \to &
\begin{picture}(32,6.5)(1,0)
   \put(0,8){\line(0,1){8}}
  \put(6,8){\line(0,1){4.5}}
  \put(12,8){\line(0,1){4.5}}
  \put(18,8){\line(0,1){8}}
  \put(24,8){\line(0,1){4.5}}
  \put(30,8){\line(0,1){4.5}}
  \put(0,16){\line(1,0){18}}
   \put(6,12.5){\line(1,0){6}}
    \put(24,12.5){\line(1,0){6}}
\end{picture}
\\
   \substack{\{(1,2,3),(4,5,6)\}\\ \text{interval}}  &&  \substack{ \{(1,4),(2),(3),(5),(6)\}\\ \text{complement} } 
   && \substack{\{(1,4),(2,3),(5,6)\}\\ \text{closure}} 
  \end{array}
\end{equation*}

\begin{equation*}
  \begin{array}{ccccc}
\begin{picture}(32,6.5)(1,0)
   \put(0,8){\line(0,1){4.5}}
  \put(6,8){\line(0,1){4.5}}
  \put(12,8){\line(0,1){4.5}}
  \put(18,8){\line(0,1){4.5}}
  \put(24,8){\line(0,1){4.5}}
  \put(30,8){\line(0,1){4.5}}
  \put(0,12.5){\line(1,0){30}}
\end{picture} &    \to & 
\begin{picture}(32,6.5)(1,0)
   \put(0,8){\line(0,1){4.5}}
  \put(6,8){\line(0,1){4.5}}
  \put(12,8){\line(0,1){4.5}}
  \put(18,8){\line(0,1){4.5}}
  \put(24,8){\line(0,1){4.5}}
  \put(30,8){\line(0,1){4.5}}
\end{picture}
 & \to &
\begin{picture}(32,6.5)(1,0)
   \put(0,8){\line(0,1){4.5}}
  \put(6,8){\line(0,1){4.5}}
  \put(12,8){\line(0,1){4.5}}
  \put(18,8){\line(0,1){4.5}}
  \put(24,8){\line(0,1){4.5}}
  \put(30,8){\line(0,1){4.5}}
  \put(6,12.5){\line(1,0){24}}
\end{picture}
\\
   \substack{\{(1,2,3,4,5,6)\}\\ \text{interval}}  &&  \substack{ \{(1),(2),(3),(4),(5),(6)\}\\ \text{complement} } 
   && \substack{\{(1),(2,3,4,5,6)\}\\ \text{closure}} 
  \end{array}
\end{equation*}
\caption{Examples of Kreweras complementation and closure partitions}
\label{fig:examplepartition}
\end{figure}
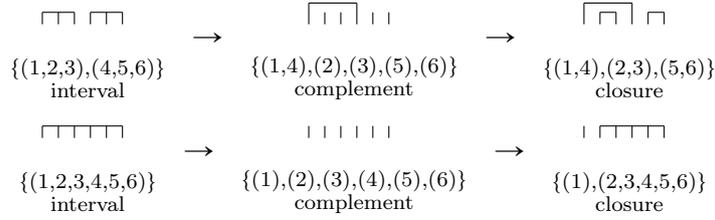

\subsection{Boolean cumulants}
\label{ssec:Booleancumulants}
Given  a  noncommutative  probability space $(\A,\state)$,
 the \emph{Boolean   cumulants} are multilinear functionals $K_n : \mathcal{A}^n\to\C$ 
defined  implicitly in terms of the mixed moments by the relation
\begin{align}
\state(\X_{1}\X_{2}\dots \X_{n}) = \sum_{\pi \in \NC(n)}K_{\pi}(\X_{1},\X_{2},\dots ,\X_{n}),\label{eq:DefinicjaKumulant}
\end{align}
where 
\begin{align}
K_{\pi}(\X_{1},\X_{2},\dots ,\X_{n}):=\Pi_{B \in \pi}K_{\abs{B}}(\X_{i}:i \in B). \label{eq:DefinicjaProduktuKumulant}
\end{align}
 Sometimes we will abbreviate the univariate cumulants as $K_{n}(\X)=K_{n}(\X,\dots ,\X )$.

Boolean cumulants provide a powerful technical tool to investigate
Boolean random variables. 
This is due to the basic property of \emph{vanishing of mixed cumulants}. 
By this we mean the property that 
$$
K_n(X_1,X_2,\dots,X_n)=0
$$
for any family of random variables $X_1,X_2,\dots,X_n$ which can be partitioned
into two mutually Boolean nontrivial subsets. 
For Boolean sequences this can be reformulated as follows.
Let $(X_i)_{i\in\N}$ be a sequence of Boolean random variables and let
$h:[r]\to\N$ be a map. We denote by $\ker h$ the set partition
which is induced by the equivalence relation 
$$
k\sim_{\ker h} l
\ 
\iff
\ 
h(k)=h(l)
.
$$
Similarly, for a multiindex $\underline{i}=i_1i_2\dots i_r$ we denote its kernel $\ker \underline{i}$ by the relation $k \sim l$ if $i_k=i_l$.

In this notation, vanishing of mixed cumulants implies that
\begin{equation}
  \label{eq:kerh>=pi}
  K_\pi(X_{h(1)},X_{h(2)},\dots,X_{h(r)})=0
  \text{ unless $\ker h\geq \pi$.}
\end{equation}

Our main technical tool is the Boolean version,
due to Lehner~\cite{Lehner:2004},   of the classical formula of 
James and Leonov/Shiryaev \cite{James:1958,LeonovShiryaev:1959} which
expresses the cumulants of products in terms of individual cumulants.
\begin{theo}
\label{thm:krawczyk}
Let
$r,n \in \N$ and $ i_1 < i_2 < \dots < i_r = n$ be given and let
$$\rho=\{(1,2,\dots,i_1),(i_1+1,i_1+2,\dots,i_2),\dots,(i_{r-1}+1,i_{r-1}+2,\dots,i_r)\}\in \NC(n)$$ 
be the induced interval partition.
Consider now random variables $\X_1,\dots,\X_n\in\A$.
Then the Boolean cumulants of the products can be expanded
as follows:
\begin{align} 
\label{twr:produktargumentow}
K_r(\X_1\dots \X_{i_1},\dots,\X_{i_{r-1}+1}\dots\X_n)=\sum_{\substack{\pi\in \NC(n) \\ \pi\vee\rho=\hat{1}_{n}} }K_\pi({\X_1,\dots,\X_n}). 
\end{align} 
\end{theo}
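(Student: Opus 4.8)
The plan is to evaluate the left-hand side by Möbius inversion over the lattice of interval partitions and then transport the result to $\NC(n)$ through a dilation isomorphism. Put $i_0=0$ and $\Y_j=\X_{i_{j-1}+1}\cdots\X_{i_j}$, so that the left-hand side is $K_r(\Y_1,\dots,\Y_r)$ and $B_j=(i_{j-1}+1,\dots,i_j)$ is the $j$-th block of $\rho$. For an interval partition $\pi\in\NC(m)$ and elements $Z_1,\dots,Z_m\in\A$, write $\state_\pi(Z_1,\dots,Z_m)=\prod_{B\in\pi}\state\bigl(\prod_{i\in B}Z_i\bigr)$, the inner products taken in increasing order of indices, which is unambiguous since every block of an interval partition is a run of consecutive integers. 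Applying the defining relation \eqref{eq:DefinicjaKumulant} inside each block and multiplying over blocks gives $\state_\pi=\sum_{\sigma\in\NC(m),\ \sigma\le\pi}K_\sigma$, because the interval partitions $\sigma\le\pi$ are exactly the products of interval partitions of the individual blocks of $\pi$. Möbius inversion in the finite lattice $\NC(m)$ then yields $K_\pi=\sum_{\sigma\le\pi}\mu(\sigma,\pi)\,\state_\sigma$; specializing to $m=r$, $\pi=\hat{1}_r$ and $Z_j=\Y_j$ we obtain
\[
K_r(\Y_1,\dots,\Y_r)=\sum_{\tau\in\NC(r)}\mu(\tau,\hat{1}_r)\,\state_\tau(\Y_1,\dots,\Y_r).
\]

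Next I would introduce the dilation map $\Phi\colon\NC(r)\to\NC(n)$ sending a $\tau$-block $(c,c+1,\dots,d)$ to the interval $(i_{c-1}+1,\dots,i_d)$; equivalently $\Phi(\tau)$ is obtained from $\rho$ by amalgamating its blocks along $\tau$. Since a union of $\rho$-blocks is an interval precisely when those blocks are consecutive, $\Phi$ is an order isomorphism of $\NC(r)$ onto the upper interval $[\rho,\hat{1}_n]$ of $\NC(n)$, and for each $\tau$ it restricts to an isomorphism of $[\tau,\hat{1}_r]$ onto $[\Phi(\tau),\hat{1}_n]$; as the Möbius function of an interval is invariant under isomorphism, $\mu(\tau,\hat{1}_r)=\mu(\Phi(\tau),\hat{1}_n)$. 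Moreover, because each block $(c,\dots,d)$ of $\tau$ makes $\Y_c\cdots\Y_d=\X_{i_{c-1}+1}\cdots\X_{i_d}$ a consecutive run of the $\X$'s, it follows straight from the definitions that $\state_\tau(\Y_1,\dots,\Y_r)=\state_{\Phi(\tau)}(\X_1,\dots,\X_n)$. Substituting into the previous display and re-indexing by $\sigma=\Phi(\tau)$ gives
\[
K_r(\Y_1,\dots,\Y_r)=\sum_{\sigma\in\NC(n),\ \sigma\ge\rho}\mu(\sigma,\hat{1}_n)\,\state_\sigma(\X_1,\dots,\X_n).
\]

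To finish I would expand $\state_\sigma=\sum_{\pi\le\sigma}K_\pi$ in this last sum and interchange the two summations, obtaining $K_r(\Y_1,\dots,\Y_r)=\sum_{\pi\in\NC(n)}K_\pi(\X_1,\dots,\X_n)\sum_{\sigma\ge\pi\vee\rho}\mu(\sigma,\hat{1}_n)$, where I have used that $\{\sigma:\sigma\ge\rho,\ \sigma\ge\pi\}=[\pi\vee\rho,\hat{1}_n]$. By the defining property of the Möbius function, $\sum_{\sigma\in[\alpha,\hat{1}_n]}\mu(\sigma,\hat{1}_n)$ equals $1$ if $\alpha=\hat{1}_n$ and $0$ otherwise, so only the terms with $\pi\vee\rho=\hat{1}_n$ survive, which is exactly formula \eqref{twr:produktargumentow}. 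The main obstacle is purely organizational: checking carefully that $\Phi$ is a lattice isomorphism onto $[\rho,\hat{1}_n]$ and that $\state_\tau(\Y)=\state_{\Phi(\tau)}(\X)$, so that the two Möbius functions genuinely match; once this dictionary between $\NC(r)$ and the interval $[\rho,\hat{1}_n]\subseteq\NC(n)$ is in place, everything reduces to standard Möbius inversion on the Boolean-type lattices $\NC(m)\cong 2^{[m-1]}$.
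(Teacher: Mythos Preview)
Your argument is correct. The paper does not actually prove Theorem~\ref{thm:krawczyk}; it quotes it as a known result due to Lehner (2004) and uses it as a black box throughout. What you have supplied is a self-contained proof via M\"obius inversion on the lattice $\NC(n)$, and every step checks out: the dilation map $\Phi$ is indeed an order isomorphism $\NC(r)\to[\rho,\hat{1}_n]$ (both sides are Boolean lattices, identified with subsets of break points), the identity $\state_\tau(\Y)=\state_{\Phi(\tau)}(\X)$ is immediate from associativity of the product in $\A$, and the final collapse $\sum_{\sigma\ge\pi\vee\rho}\mu(\sigma,\hat{1}_n)=\delta_{\pi\vee\rho,\hat{1}_n}$ is the standard M\"obius identity. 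This is in fact essentially the argument Lehner gives in the cited reference, specialized to the interval-partition lattice; the only thing you might add for completeness is an explicit remark that $\NC(n)$ is isomorphic to the Boolean lattice $2^{[n-1]}$, so that all the lattice-theoretic machinery (existence of joins, M\"obius function) is available without further justification.
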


Next, \cite[Proposition 3.3]{Popa2009} investigates the properties of Boolean cumulants with
scalars among their entries. 
This result is unexpected, because the identity operator is not Boolean independent of another operator.  
\begin{prop} \label{niezaleznoscPopa}
If $n,m\geq 1$ and $X\in \mathcal{A}^n$, $Y\in \mathcal{A}^m$, then 
\begin{enumerate}
    \item $K_{m+1}(1,Y)=0$,
    \item $K_{m+1}(X,1)=0$,
    \item $K_{n+m+1}(X,1,Y)=K_{n+m}(X,Y)$.
\end{enumerate}

\end{prop}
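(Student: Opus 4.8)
The plan is to prove all three items together by a single induction on the total number of arguments, using nothing but the recursive form of the defining relation \eqref{eq:DefinicjaKumulant} together with the facts that $\state(1)=1$ (hence $K_1(1)=1$) and $1\cdot X=X$. First I would record two elementary recursions. Every interval partition of $[N]$ has a unique first block $(1,\dots,k)$, and deleting it leaves an interval partition of $\{k+1,\dots,N\}$; grouping the partitions in \eqref{eq:DefinicjaKumulant} according to the size $k$ of this first block gives
\[
K_N(Z_1,\dots,Z_N)=\state(Z_1\cdots Z_N)-\sum_{k=1}^{N-1}K_k(Z_1,\dots,Z_k)\,\state(Z_{k+1}\cdots Z_N).
\]
The mirror computation, grouping according to the last block $(k,\dots,N)$, gives $K_N(Z_1,\dots,Z_N)=\state(Z_1\cdots Z_N)-\sum_{k=2}^{N}\state(Z_1\cdots Z_{k-1})\,K_{N-k+1}(Z_k,\dots,Z_N)$.

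For item (1) I would induct on $m$. Feeding $1,Y_1,\dots,Y_m$ into the first recursion, the $k=1$ summand is $K_1(1)\,\state(Y_1\cdots Y_m)=\state(Y_1\cdots Y_m)$, which exactly cancels the moment $\state(1\cdot Y_1\cdots Y_m)=\state(Y_1\cdots Y_m)$, while every summand with $2\le k\le m$ contains the factor $K_k(1,Y_1,\dots,Y_{k-1})$, which vanishes by the inductive hypothesis; the base case is $K_2(1,Y_1)=\state(Y_1)-\state(Y_1)=0$. Item (2) is the mirror statement, proved the same way from the last-block recursion, the cancelling term now being the $k=N$ summand $\state(X_1\cdots X_m)\,K_1(1)$.

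For item (3) I would induct on $n+m$ with $n,m\ge1$, applying the first recursion to $K_{n+m+1}(X_1,\dots,X_n,1,Y_1,\dots,Y_m)$ and splitting the sum over $k$ at the slot $n+1$ occupied by the scalar. For $1\le k\le n$ the summand is $K_k(X_1,\dots,X_k)\,\state(X_{k+1}\cdots X_nY_1\cdots Y_m)$; the single summand $k=n+1$ equals $K_{n+1}(X_1,\dots,X_n,1)\,\state(Y_1\cdots Y_m)=0$ by item (2) — and here is where $n,m\ge1$ enters, guaranteeing that $n+1$ lies strictly inside $\{1,\dots,n+m+1\}$; and for $n+2\le k\le n+m$ the inductive hypothesis rewrites $K_k(X_1,\dots,X_n,1,Y_1,\dots,Y_{k-n-1})$ as $K_{k-1}(X_1,\dots,X_n,Y_1,\dots,Y_{k-n-1})$, so that after setting $j=k-1$ the summand becomes $K_j(X_1,\dots,X_n,Y_1,\dots,Y_{j-n})\,\state(Y_{j-n+1}\cdots Y_m)$. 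Since $\state(X_1\cdots X_n\cdot 1\cdot Y_1\cdots Y_m)=\state(X_1\cdots X_nY_1\cdots Y_m)$, what remains is exactly the first-recursion expansion of $K_{n+m}(X_1,\dots,X_n,Y_1,\dots,Y_m)$, split at $j=n$; this proves (3), the base case $n=m=1$ being $K_3(X_1,1,Y_1)=\state(X_1Y_1)-\state(X_1)\state(Y_1)=K_2(X_1,Y_1)$.

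Nothing here is deep; the one spot demanding care is the index bookkeeping in the last step — verifying that every cumulant occurring on the right has strictly fewer arguments (so the induction is well founded) and that the two broken-up sums coincide term by term after the shift $j=k-1$. A more conceptual but combinatorially more delicate alternative would use the explicit inversion $K_N(Z_1,\dots,Z_N)=\sum_{\pi\in\NC(N)}(-1)^{\abs{\pi}-1}\prod_{B\in\pi}\state\bigl(\prod_{i\in B}Z_i\bigr)$ and a sign-reversing, moment-preserving involution on $\NC(N)$ that either splits off or absorbs the singleton sitting at the scalar's slot; I would keep the recursive argument above as the main line and mention this only as an alternative.
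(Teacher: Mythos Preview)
Your proof is correct. The inductive argument via the first-block (and last-block) recursion is clean, and the index bookkeeping in item (3) checks out: for $n+2\le k\le n+m$ the cumulant $K_k(X_1,\dots,X_n,1,Y_1,\dots,Y_{k-n-1})$ has $n'=n\ge1$, $m'=k-n-1\ge1$, and $n'+m'=k-1<n+m$, so the hypothesis applies; after the shift $j=k-1$ the two sums match term by term. One small wording fix: in item (2) you wrote ``$\state(X_1\cdots X_m)$'' where the index should be $n$ (or just say ``symmetric to (1)'').

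As for comparison with the paper: there is nothing to compare. The paper does not prove this proposition; it is simply quoted from \cite[Proposition~3.3]{Popa2009}. Your argument is therefore a self-contained replacement for that citation. The alternative you sketch at the end---using the explicit M\"obius inversion $K_N=\sum_{\pi}(-1)^{|\pi|-1}\prod_B\state(\cdot)$ together with a sign-reversing involution at the scalar slot---is also valid and is in fact closer in spirit to how such identities are often proved in the noncrossing/free setting, but for interval partitions the recursive proof you gave is both shorter and more transparent.
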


\subsection{Some probability distributions}
Let us now recall basic properties of some specific probability distributions
which play prominent roles in the present paper.
\subsubsection{Boolean Gaussian distribution}
 A non-commutative random variable $X$ is said to be  \emph{Boolean Gaussian or  Boolean normal }  if $K_r(X)=0$ for $r>2$.
 The reason for the latter is the fact that its Boolean cumulants $K_r(X)=0$ for $r>2$
and it appears in the Boolean version of the central limit theorem.
 The Boolean Gaussian  law with mean zero and variance $a^2$ has distribution
$$\frac{1}{2}\delta_{-a}+\frac{1}{2}\delta_{a}. $$
 Its 
Cauchy-Stieltjes transform
is given by the formula
$$
G_{\mu}(z)=\frac{1}{1-a^2/z} .
$$

For the purpose of this article we say that a family $X_i $ $i\in [n]$ is a \emph{Boolean standard normal family} if $K_1(X_i)=K_2(X_i)=1$, $K_r(X_i)=0$ for $r>2$ and  $X_i $ are Boolean independent.

\begin{Rem}
(1). We assume that the Boolean standard normal family have mean one, because as one can see later, the cumulants of quadratic forms are zero whenewer mean is zero. 

(2). In contrast to the classical convolution, it is not true that, for arbitrary $a\in \R$, the convolution $\mu \uplus \delta_a$ is equal to the shift of measure $\mu$ by the amount $a$.
 This fact is a consequence of Proposition \eqref{niezaleznoscPopa}.    For example, one has
$$
(\frac{1}{2}\delta_{-a}+\frac{1}{2}\delta_{a})\uplus \delta_c= \frac{1}{2}\left(\Big(1+\frac{c}{\sqrt{4a^2+c^2}}\Big)\delta_{(c+\sqrt{4a^2+c^2})/2}+\Big(1-\frac{c}{\sqrt{4a^2+c^2}}\Big)\delta_{(c-\sqrt{4a^2+c^2})/2} \right).
$$
\end{Rem}
\subsubsection{Boolean Poisson distribution}
A non-commutative random variable $\X$ is said to be Boolean Poisson variable if it has  distribution $\nu=\nu(\lambda,\alpha)$ defined by the formula
\begin{align} \label{MPdist}
\nu=\frac{1}{\lambda+1}(\delta_0+ \lambda\delta_{\alpha(\lambda+1)}),
\end{align}
where $\alpha,\lambda\ge 0$. 
The parameters $\lambda$ and $\alpha$ are called the rate and the jump size, respectively.
It is easy to see that if $X$ is Boolean Poisson, $\nu(\lambda,\alpha)$, then $K_n(\X)=\alpha^n\lambda$. Therefore its
$H$-transform has the form
$$H(z)=\frac{\lambda\alpha z}{1-\alpha z}.$$

\subsubsection{Even Poisson distribution}
 We call an element $\X\in \mathcal{A}$ \emph{even Poisson distribution} 
 if its
 even Boolean  cumulants are equal $K_{2n}(\X)=1. $ 
The basic example of such a law is Boolean Poisson distribution with $\lambda=\alpha=1$.

\subsubsection{Identically distributed random variables}
In this article, by identically distributed  random variables we mean random variables with the same cumulant, which are uniformly bounded in the sample size.  
Precisely, we  call that the elements $X_1,\dots,X_n\in \A$ are \emph{identically distributed } if  their all cumulants   are  equal $K_r(\X_1)=\dots =K_r(\X_n)$ for all $r\in\N$ (they can depend on $n$), and 
$\abs{K_r(\X_i)}< C_r$, where $C_r$ is a universal  constant independent of $n$.  This definition play an important role in the section about the limit theorems  of quadratic forms.

\subsection{Special matrices and lemmas }
Let $M_n(\C)$ and $M_n^{sa}(\C)$ denote the set of a scalar and  self-adjoint  matrices.  
For scalars $a,b, c\in\C$  we denote by  $\left[\begin{smallmatrix}
    c &a\\
    b& c
    \end{smallmatrix}\right]_n
\in M_n(\C)
$
the matrix whose diagonal elements are equal to $c$, whose  upper-triangular entries are equal to $a$ and
whose lower-triangular elements are equal to $b$, respectively. 
  For simplicity of notation, we use the  letter $J_n$, $\P$ and $B_n$ to denote the matrices $$J_n=\left[\begin{smallmatrix}
    1 &1\\
    1& 1
    \end{smallmatrix}\right]_n, \text{ } P_n=\frac{1}{n}J_n \text{ and }B_n=i \frac{1}{n}\left[\begin{smallmatrix}
    0 &1\\
    -1 &0
    \end{smallmatrix}\right]_n.$$ 
    In mathematics, a stochastic matrix is a square matrix used to describe the transitions of a Markov chain. Each of its entries is a nonnegative real number representing a probability, i.e., each row summing to $1$. For our purpose, we  introduce  a \emph{zero sum matrix} which is a square matrix in $M_n(\C)$, with each row summing to $0$.    For our next result we will use the following lemmas about matrices.
    
\begin{lemm} \label{lemm:macierzediagonalne}
 Let 
   $A=[a_{i,j}]_{i,j=1}^n\in M_n^{sa}(\C)$  and $\Lambda=\diag(A)=\diag(a_{1,1},\dots,a_{n,n})$ be its diagonal matrix. If  $$\Tr(J_n \Lambda^k A \Lambda^k)=0 \text{ for all even } k\in \N, $$
then  $A$ is a constant diagonal matrix (= multiple of an identity matrix). 
\end{lemm}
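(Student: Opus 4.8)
The plan is to read off the hypothesis entrywise. First I would write $\Lambda = \diag(a_{1,1},\dots,a_{n,n})$ and compute the $(i,i)$ entry of $\Lambda^k A \Lambda^k$: it is $a_{i,i}^k\, a_{i,j}\, a_{j,j}^k$ in position $(i,j)$, so $(J_n \Lambda^k A \Lambda^k)$ has $(i,\ell)$ entry $\sum_j a_{i,i}^{?}\dots$ — more simply, since $J_n$ has all entries $1$, I would observe
\[
\Tr(J_n \Lambda^k A \Lambda^k) = \sum_{i,j} (\Lambda^k A \Lambda^k)_{i,j} = \sum_{i,j} a_{i,i}^k\, a_{i,j}\, a_{j,j}^k .
\]
Thus the hypothesis says $\sum_{i,j} a_{i,i}^k a_{j,j}^k a_{i,j} = 0$ for every even $k$. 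Introducing the vector $v^{(k)} = (a_{1,1}^k, \dots, a_{n,n}^k)$, this is precisely the quadratic form identity $\langle v^{(k)}, A v^{(k)}\rangle = 0$ for all even $k$ (here I use that $A$ is self-adjoint and the entries $a_{i,i}$ are real, so $v^{(k)}$ is a real vector and there is no conjugation subtlety).

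Next I would exploit that $A$ is self-adjoint, hence diagonalizable with real eigenvalues and an orthonormal eigenbasis; write $A = \sum_m \lambda_m P_m$ with $P_m$ the spectral projections. Then $\langle v^{(k)}, A v^{(k)}\rangle = \sum_m \lambda_m \|P_m v^{(k)}\|^2 = 0$. The idea is to let $k\to\infty$ through even values and track which coordinates of $v^{(k)}$ dominate. Let $\mu = \max_i |a_{i,i}|$; after rescaling $A$ we may assume $\mu = 1$ (if $\mu = 0$ then $\Lambda = 0$, and one still has to argue $A$ is a constant multiple of the identity — see the obstacle below). Grouping the indices by the value of $a_{i,i}$, as $k\to\infty$ along even integers the normalized vectors $v^{(k)}/\|v^{(k)}\|$ converge (along subsequences) to vectors supported on the set where $|a_{i,i}|$ is maximal. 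Dividing the identity $\langle v^{(k)}, A v^{(k)}\rangle = 0$ by $\|v^{(k)}\|^2$ and passing to the limit forces $\langle w, A w\rangle = 0$ for the limit vector $w$ supported on the extremal coordinate set $E = \{i : |a_{i,i}| = 1\}$. Iterating this reasoning — peel off the top layer, restrict attention to the principal submatrix $A_E$, and repeat with the next-largest value of $|a_{i,i}|$ — I would show that on each "level set" of the diagonal the corresponding principal submatrix has a zero diagonal entry of its quadratic form along the relevant vectors; combined with self-adjointness this should pin down the off-diagonal structure.

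The main obstacle, and where I expect the real work to be, is turning these "limit vector" relations into the conclusion that the \emph{entire} matrix is a scalar. The clean way is probably to proceed by induction on $n$ together with a careful bookkeeping of the distinct values taken by the diagonal. Concretely: if all $a_{i,i}$ are equal, say to $c$, then $v^{(k)} = c^k(1,\dots,1)$ and for $c \neq 0$ the hypothesis collapses to $\sum_{i,j} a_{i,j} = \One^{\!\top} A \One = 0$ for a single value of $k$ — which alone is far too weak, so one genuinely needs a finer argument even in this case; here I would instead observe that the hypothesis must in fact be extracted more carefully, perhaps by also using odd twisted moments or by noting the statement as given may implicitly rely on the ambient positivity/normalization from the application. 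If the diagonal is non-constant, I would split $[n]$ into the blocks $E_1, E_2, \dots$ on which $|a_{i,i}|$ takes its successive values $t_1 > t_2 > \dots > 0$ (plus possibly a block where $a_{i,i}=0$), factor each $v^{(k)}$ accordingly, and use the even-$k$ asymptotics $t_\ell^{-k} (v^{(k)})|_{E_\ell \cup \dots} \to$ (indicator-type vector) to conclude successively that $A$ restricted to $E_1$ is a scalar, then that the coupling blocks vanish, and descend. The delicate point throughout is handling coordinates where $a_{i,i} < 0$: for even $k$ their contributions behave exactly like $|a_{i,i}|^k$, so the $\pm$ signs of the diagonal are invisible to the test, which is exactly why "even $k$" (rather than all $k$) is the natural and sharp hypothesis, and why the self-adjointness of $A$ (giving real, controllable eigenvalues) is indispensable for converting the scalar identities back into a statement about the matrix itself.
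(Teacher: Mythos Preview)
Your opening computation $\Tr(J_n\Lambda^k A\Lambda^k)=\sum_{i,j}(a_{i,i}a_{j,j})^k a_{i,j}$ and the idea of sending $k\to\infty$ through even integers is precisely the paper's approach. The obstacles you flag, however, are not merely ``where the real work is'': they are fatal, because the lemma as stated is false.

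Take $n=2$ and $A=\left(\begin{smallmatrix}1&-1\\-1&1\end{smallmatrix}\right)$. Here $\Lambda=I$, so for every $k$ the hypothesis reduces to the single equation $\Tr(J_2A)=\sum_{i,j}a_{i,j}=0$, which holds; yet $A$ is not a multiple of the identity. This is exactly your observation that when the diagonal is constant the hypothesis ``collapses to $\One^{\!\top}A\One=0$ \dots\ which alone is far too weak''. Even the weaker conclusion ``the diagonal of $A$ is constant'' fails: for $A=\diag(1,-1)$ one has $\sum_{i,j}(a_{i,i}a_{j,j})^k a_{i,j}=\sum_i a_{i,i}^{2k+1}=1-1=0$ for every $k$, yet the diagonal is not constant. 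Your remark that for even $k$ the $\pm$ signs of the $a_{i,i}$ are ``invisible to the test'' is precisely the mechanism behind this second example.

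The paper's own argument passes from $\sum_{i,j}(a_{i,i}a_{j,j}/C^2)^k a_{i,j}=0$ to $\sum_{i\in\max_\Lambda}a_{i,i}=\#\max_\Lambda\cdot C$ in the limit $k\to\infty$, silently discarding the off-diagonal contributions $a_{i,j}$ with $i,j\in\max_\Lambda$ and silently assuming every extremal diagonal entry equals $+C$ rather than $\pm C$; the two counterexamples above show neither step can be justified. In the paper's only application (the proof of Theorem~\ref{lem:equvalencecancellation}) the lemma is actually invoked with all $k\ge 0$, not only even $k$, and after $A$ has already been shown to be a zero-sum matrix; moreover what is used there is only that the diagonal of $A$ is constant. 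So a corrected statement under those additional hypotheses may well be what is intended, but the lemma as written cannot be proved, and your difficulty in closing the argument is entirely warranted.
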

\begin{proof}
By direct calculation we have 
$$\Tr(J_n \Lambda^k A \Lambda^k)=\sum_{i,j}(a_{i,i} a_{j,j})^k a_{i,j}=0. $$
The proof is by contradiction, so assume that $A$ is not a constant diagonal matrix.  
Let $C=\max_{i}\abs{a_{i,i}}$ and $max_\Lambda=\{i|i\in [n] \text{ and }\abs{a_{i,i}}=C\}$ be the set of indexes of maximal diagonal elements, then 
$$\sum_{i,j}(a_{i,i} a_{j,j}/C^2)^k a_{i,j}=0.$$
Next, let $k$ go to infinity, then in limit, we
get    $$\sum_{i\in max_\Lambda}a_{i,i}=\#max_\Lambda \times  C= 0,$$ which implies $C=0$ for all $i\in [n]$, which  contradicts the assumption. 
\end{proof}
\begin{lemm}\label{lemma:equvalenttwoforall} Let    $A=[a_{i,j}]_{i,j=1}^n\in M_n^{sa}(\C)$, then 
 $$ \Tr(\J A^{k})=0 \text{ for all $k\in \N$} \Longleftrightarrow  \Tr(\J A^{2})=0 \Longleftrightarrow A\text{ is zero sum matrix}.$$ 
 \end{lemm}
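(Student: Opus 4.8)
\textbf{Proof plan for Lemma \ref{lemma:equvalenttwoforall}.}

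The plan is to prove the cycle of implications, with the non-trivial direction being ``$\Tr(\J A^2)=0 \Rightarrow A$ is a zero sum matrix'', and then closing the loop cheaply. First I would record that $\J = J_n = [1]_{i,j=1}^n$ is the all-ones matrix, which factors as $J_n = \mathbf{e}\mathbf{e}^{T}$ where $\mathbf{e}=(1,\dots,1)^{T}$. Hence for any matrix $M$ we have the key identity $\Tr(J_n M)=\mathbf{e}^{T}M\mathbf{e}=\sum_{i,j}M_{i,j}$, i.e.\ $\Tr(J_n M)$ is just the sum of all entries of $M$. In particular $\Tr(\J A^{k}) = \mathbf{e}^{T}A^{k}\mathbf{e}$ for every $k$, and $\Tr(\J A)=\mathbf{e}^{T}A\mathbf{e}=\sum_{i,j}a_{i,j}$.

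For the implication ``$\Tr(\J A^2)=0 \Rightarrow A$ is zero sum'': since $A=A^{*}$ is self-adjoint (Hermitian), write $A^2 = A^{*}A$, so $\mathbf{e}^{T}A^2\mathbf{e} = \mathbf{e}^{T}A^{*}A\mathbf{e} = (A\mathbf{e})^{*}(A\mathbf{e}) = \norm{A\mathbf{e}}^2$. Thus $\Tr(\J A^2)=0$ forces $A\mathbf{e}=0$, which says precisely that every row of $A$ sums to zero, i.e.\ $A$ is a zero sum matrix. (Here I should be slightly careful about whether $\mathbf{e}^{T}$ or $\mathbf{e}^{*}$ appears, since entries of $A$ may be complex; but $\mathbf{e}$ has real entries so $\mathbf{e}^{T}=\mathbf{e}^{*}$, and the Hermitian hypothesis $A^{*}=A$ is exactly what lets me turn the quadratic form into a squared norm.) Conversely, if $A$ is zero sum then $A\mathbf{e}=0$, hence $A^{k}\mathbf{e}=0$ for all $k\geq 1$, so $\Tr(\J A^{k})=\mathbf{e}^{T}A^{k}\mathbf{e}=0$ for all $k\in\N$. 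This already gives ``$A$ zero sum $\Rightarrow$ $\Tr(\J A^{k})=0$ for all $k$'', and trivially ``$\Tr(\J A^{k})=0$ for all $k$ $\Rightarrow$ $\Tr(\J A^2)=0$'' by taking $k=2$. Chaining these three implications closes the equivalence.

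The only place requiring genuine attention — the ``main obstacle'', though it is mild — is the step $\Tr(\J A^2)=\norm{A\mathbf{e}}^2$, which is where self-adjointness of $A$ is essential: without it $\Tr(\J A^2)=0$ need not imply $A\mathbf{e}=0$ (one only gets that $A\mathbf{e}$ is isotropic for the bilinear form $\mathbf{e}^{T}(\cdot)$, which is weaker). I would state explicitly that this is the role of the hypothesis $A\in M_n^{sa}(\C)$. Everything else is the elementary bookkeeping that $\Tr(J_n M)$ equals the total entry sum and that $A\mathbf{e}=0$ is synonymous with ``all row sums vanish'', so I would keep that part terse.
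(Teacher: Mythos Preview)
Your proof is correct. The step $\Tr(\J A^{2})=\mathbf{e}^{*}A^{*}A\mathbf{e}=\norm{A\mathbf{e}}^{2}$ is exactly the computation the paper carries out for the equivalence of the last two conditions, written in coordinates there as $\Tr(\J A^{2})=\sum_{i}\bigl|\sum_{j}a_{i,j}\bigr|^{2}$.

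Where you differ is in the passage from $k=2$ to all $k$. The paper diagonalises $A$ via the spectral theorem, writing $\Tr(\J A^{k})=\sum_{i}\abs{\sigma(U_{i})}^{2}\lambda_{i}^{k}$ in terms of eigenvalues $\lambda_{i}$ and the coordinate sums $\sigma(U_{i})$ of an orthonormal eigenbasis, and then argues that the $k=2$ case forces $\sigma(U_{i})\lambda_{i}=0$ for each $i$, hence all $k$ vanish. Your route is more direct: once $A\mathbf{e}=0$ you immediately get $A^{k}\mathbf{e}=0$, bypassing any appeal to the spectral decomposition. This is cleaner for the lemma in isolation. The trade-off is that the paper's proof introduces the quantities $\lambda_{i}$ and $\sigma(U_{i})$ which are then reused verbatim in the statement and proof of the subsequent proposition on the $H$-transform of quadratic forms in Boolean standard normal variables; your argument does not set up that notation, so if you adopt it you would need to reintroduce the spectral data separately when it is next needed.
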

 \begin{proof}We will first prove the equivalence of first two conditions;  then we show equivalence  of last two conditions. 
We use the notation 
\begin{itemize}
\item $\{\lambda_1,\dots,\lambda_n\}$ is the spectrum of $A$, the eigenvalues are real;
\item $\{U_1,\dots,U_n\}$ is the orthonormal basis (or  unitary basis) of corresponding eigenvectors;
\item if $x$ is  an $n$-vector it will be  convenient to denote by $\sigma(x)$ the sum of the coordinates of $x$.
\end{itemize}  
Let $\mathbf{1}$ denote the  vector  $(1,1,\dots,1)$, 
then 
 \begin{align*}
\Tr(\J A^{k})&=\mathbf{1} A^k \mathbf{1}^\ast 
\\&=( \sigma(U_1),\dots, \sigma(U_n)) \diag(\lambda_1^k,\dots,\lambda_n^k)( \sigma(U_1),\dots, \sigma(U_n))^\ast
\\&=\abs {\sigma(U_1)}^2 \lambda_1^k+\dots+ \abs {\sigma(U_n)}^2 \lambda_n^k=0.
\end{align*}
The above equality holds for $k=2$ if and only if  $\sigma(U_i)\lambda_i=0 \Longleftrightarrow \sigma(U_i)=0 \vee \lambda_i=0 $ for all $i\in [n],$ which  means that $\Tr(\J A^{k})=0 $    
 for all $k\in \N$. 
 Now let us observe that
$$\Tr(\J A^{2})=\sum_i \big(\sum_j a_{i,j}{\sum_j  a_{j,i}}\big)=
\sum_i \big(\sum_j a_{i,j}\overline{\sum_j  a_{i,j}}\big)=0 \Longleftrightarrow \sum_j a_{i,j}=0 \text{ for all $i\in[n]$}. $$
\end{proof}
\
\subsection{Convergence in distribution }
\label{sssec:CLT}
In noncommutative probability
we say that a sequence $X_n$ of random variables \emph{converges in
distribution} towards $X$ as $n\to \infty$, denoted by
 $$\X_n
  \xrightarrow{d} X,$$
  if we have for all $m\in\N$
   $$\lim_{n\to \infty}\state(\X_n^m)=\state( X^m)\text{ or equivalently}
   \lim_{n\to \infty}K_m(\X_n)=K_m(X)
   .$$
\subsection{Convergence in state}
In this section we introduce the convergence in state
$\omega$
with density matrix $P_n$, that is, 
\begin{equation*}
\omega(C):=\Tr(P_nC)=\frac{1}{n}\sum_{i,j}^n c_{ij}=\xi^TC\xi, 
\end{equation*}
where, as above, by $\xi$ we denote the unit vector
$\xi=\frac{1}{\sqrt{n}}(1,1,\dots,1)^T$ and $C=[c_{i,j}]_{i,j=1}^n\in M_n(\C)$.
We say that a sequence of $N\times N$ 
deterministic 
matrices $A_N$ have limit distribution $\mu$ with respect to the
state $\omega$ if for every $m\in\IN$ the moments satisfy
$$
\lim_{N\to\infty}\Tr(P_N A_N^m)=\lim_{N\to\infty}\omega(A_N^m)=\int t^{m} \dd{\mu(t)}
.
$$
Note that in this case $\mu$ is not necessarily a probability measure.

\subsection{Combinatorics of tangent numbers}
\label{ssec:tangentnumbers}
The \emph{tangent numbers}
\begin{equation}
  \label{eq:tangentnumbers}
  T_{2k-1}=(-1)^{k+1}\frac{4^{k}(4^{k}-1){B_{2k}}}{2k}
\end{equation}
for $k\in\N $ are the Taylor coefficients of the tangent function
$$
\tan z = \sum_{n=1}^\infty T_n\frac{z^n}{n!} = z + \frac{2}{3!} z^3 +  \frac{16}{5!}z^5 +\frac{272}{7!}z^7 + \dotsm,
$$
see \cite[Page 287]{GrahamKnuthPatashnik:1994}.
The tangent numbers are complemented by the \emph{secant numbers}.  
Together they form
the sequence of \emph{Euler zigzag numbers} $E_n$ 
function
$$
\tan z +\sec z =\sum_{n=0}^\infty
\frac{E_{n}}{n!}z^{n}.
$$ 
These numbers are also called 
\emph{Springer numbers} or
\emph{up-down numbers} \cite{Carlitz:1975:permutations} or
\emph{snake numbers}  \cite{Arnold:1992:snakes,Hoffman:1999:derivative}
and  appear in several different contexts,
see for example \cite{Flajolet1980,Arnold:1991,StanleyVol2,Stanley2010} or  Andr\'e's theorem \cite{Andre1880}.
The \emph{higher order tangent numbers} $T_n^{(k)}$ were introduced by Carlitz and Scoville
\cite{CarlitzScoville:1972} as the coefficients of the Taylor series
$$\tan^{k+1} z =\sum_{n=k+1}^\infty T_n^{(k+1)}\frac{z^n}{n!}.$$
The generating function of the tangent polynomials $T_n(x) = \sum_{k=0}^{n-1}T_n^{(k+1)}x^k$ 
were computed by Carlitz and Scoville's
\cite[Equation (1.6)]{CarlitzScoville:1972} and we have
\begin{equation}
  \label{eq:genfunTnx}
\begin{aligned}
  T(x,z)
  &= \frac{\tan z}{1-x\tan z}\\
  &= \sum_{n=1}^\infty \frac{T_n(x)}{n!} z^n. 
\end{aligned}
\end{equation}

\section{Cumulants of quadratic forms }\label{sec:cumquadrform}
In this subsection we express the cumulants of quadratic forms of  random variables
in terms of the conditional expectations of the system matrix
onto the diagonal matrices. We start with a  lemma, which establishes an isomorphism of interval partition of $r+1$ elements, with special kind of interval partitions of $2r$ points (the Boolean analogue of \cite[Lemma 2.14]{EjsmontLehner:2017}). In some sense the following lemma appears in   literature (see for example \cite{FevrierMastnakNicaSzpojankowski2020}) but no one has systematized it in the following form.

\begin{lemm} 
  \label{lemm:singelton}
  Let $r\in\mathbb{N}$ and $\pi \in \NC(2r)$,
  then $\pi \vee \onetwo{r}=\hat{1}_{2r}$ if and only if $\pi\geq 
\begin{picture}(56,6.5)(1,0)
  \put(2,0){\line(0,1){4.5}}
  \put(8,0){\line(0,1){4.5}}
  \put(14,0){\line(0,1){4.5}}
  \put(20,0){\line(0,1){4.5}}
  \put(26,0){\line(0,1){4.5}}
  \put(44,0){\line(0,1){4.5}}
  \put(50,0){\line(0,1){4.5}}
  \put(56,0){\line(0,1){4.5}}
\put(28.0,0){$\cdots$}
  \put(8,4.5){\line(1,0){6}}
  \put(20,4.5){\line(1,0){6}}
  \put(44,4.5){\line(1,0){6}}
\end{picture}$,
  i.e.,  the elements $2i$ and $2i+1$
   lie in the same block of $\pi$ for $i\in[r-1]$.  
  Consequently
  $$
  \{\pi : \pi \vee \onetwo{r}=\hat{1}_{2r}\}
  = [
\begin{picture}(56,6.5)(1,0)
  \put(2,0){\line(0,1){4.5}}
  \put(8,0){\line(0,1){4.5}}
  \put(14,0){\line(0,1){4.5}}
  \put(20,0){\line(0,1){4.5}}
  \put(26,0){\line(0,1){4.5}}
  \put(44,0){\line(0,1){4.5}}
  \put(50,0){\line(0,1){4.5}}
  \put(56,0){\line(0,1){4.5}}
\put(28.0,0){$\cdots$}
  \put(8,4.5){\line(1,0){6}}
  \put(20,4.5){\line(1,0){6}}
  \put(44,4.5){\line(1,0){6}}
\end{picture}, \hat{1}_{2r}  ] ,
  $$
  is a lattice isomorphic to $\NC(r+1)$.  
\end{lemm}
\begin{cor}\label{wniosekdolematu}
 The above lemma implies that 
the structure of blocks is 
\end{cor}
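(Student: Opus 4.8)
The plan is to obtain the block structure as a direct unwinding of Lemma~\ref{lemm:singelton}, with essentially no new ideas beyond keeping careful track of the two extreme singletons. By the lemma the admissible partitions are exactly those of the interval $[\pispecial{r},\hat 1_{2r}]$, that is, the interval partitions $\pi\in\NC(2r)$ with $\pi\geq\pispecial{r}$. Recall that $\pispecial{r}$ has the $r+1$ blocks
$$
B_0=\{1\},\qquad B_i=\{2i,2i+1\}\ \ (1\leq i\leq r-1),\qquad B_r=\{2r\},
$$
so that its only singletons are the two extreme blocks $B_0$ and $B_r$, while all intermediate blocks are consecutive pairs.

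First I would argue that every block of $\pi$ is a union of \emph{consecutive} blocks $B_j\cup B_{j+1}\cup\dots\cup B_k$ of $\pispecial{r}$. Indeed, since $\pi\geq\pispecial{r}$ each $B_i$ is contained in a single block of $\pi$; and since both $\pi$ and $\pispecial{r}$ are interval partitions, coarsening can only merge neighbouring intervals, and the $B_i$ are already listed in increasing order. Next I would read off the shape and parity of such a union. A block of $\pi$ containing neither $1$ nor $2r$ is of the form $B_j\cup\dots\cup B_k=\{2j,2j+1,\dots,2k+1\}$ with $1\leq j\leq k\leq r-1$, hence an interval of \emph{even} cardinality $2(k-j+1)$ whose left endpoint is even and whose right endpoint is odd. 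The block containing $1$ (when it does not also contain $2r$) is $B_0\cup\dots\cup B_k=\{1,2,\dots,2k+1\}$, of \emph{odd} cardinality $2k+1$; symmetrically the block containing $2r$ is $B_j\cup\dots\cup B_r=\{2j,\dots,2r\}$, again of odd cardinality. Finally, if $1$ and $2r$ lie in one block then $\pi=\hat 1_{2r}$ and that block is all of $\{1,\dots,2r\}$, of even cardinality $2r$. This yields the asserted description: every block is an interval that splits no pair $\{2i,2i+1\}$, the only blocks of odd size are the (at most two) outer blocks meeting the endpoints $1$ and $2r$, and all interior blocks have even size.

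The step requiring the most care — though it is not a genuine obstacle — is the correct treatment of the two singleton endpoints $B_0,B_r$, since they break the uniform ``even pair'' pattern and are exactly what produces the odd-cardinality outer blocks; the degenerate case $\pi=\hat 1_{2r}$ must also be separated off, as there the outer blocks coincide and the cardinality is even. As a consistency check I would note that a cut of $\pi$ may occur only after the odd positions $1,3,\dots,2r-1$, i.e.\ in the $r$ gaps separating consecutive $B_i$; choosing any subset of these gaps gives exactly $2^r$ partitions $\pi$, matching $\#\NC(r+1)=2^r$ and confirming the lattice isomorphism of Lemma~\ref{lemm:singelton}. Under that isomorphism the block $B_j\cup\dots\cup B_k$ of $\pi$ corresponds to the block $\{j+1,\dots,k+1\}$ of the associated partition in $\NC(r+1)$, which makes the translation between the two descriptions explicit.
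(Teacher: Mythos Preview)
Your argument is correct and follows essentially the same route as the paper: the block structure is read off from the condition $\pi\geq\pispecial{r}$ by noting that each block of $\pi$ is a union of consecutive blocks $B_0,\dots,B_r$ of $\pispecial{r}$, whence interior blocks have even size while the (at most two) blocks touching the endpoints $1$ and $2r$ have odd size, with $\pi=\hat 1_{2r}$ as the degenerate case. The paper establishes this same structure inside the proof of Lemma~\ref{lemm:singelton} itself, arguing by elimination (an odd interior block, or an even block starting at an odd position, would disconnect $\pi\vee\onetwo{r}$); your version instead takes the lemma's conclusion $\pi\geq\pispecial{r}$ as input and unwinds it constructively, which is a cleaner way to present the corollary and also makes the bijection with $\NC(r+1)$ explicit at the block level.
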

\setlength\parindent{110pt}$\pi=$  \text{or } $\pi$ = \begin{picture}(32,6.5)(1,0) 
\put(0,10){\text{Odd}}
\put(60,10){\text{Even inside}}
\put(140,10){\text{Odd}}
 \put(0,-10){\line(0,1){4.5}}
  \put(6,-10){\line(0,1){4.5}}
  \put(12,-10){\line(0,1){4.5}}
  \put(18,-10){\line(0,1){4.5}}
  \put(36,-10){\line(0,1){4.5}}
  \put(42,-10){\line(0,1){4.5}}
\put(20,-10){$\cdots$}
  \put(0,-5.5){\line(1,0){6}}
  \put(12,-5.5){\line(1,0){6}}
  \put(36,-5.5){\line(1,0){6}}
  \put(44,-10){$\cdots$}

 
   \put(0,0){\line(0,1){4.5}}
  \put(6,0){\line(0,1){4.5}}
  \put(12,0){\line(0,1){4.5}}
  \put(18,0){\line(0,1){4.5}}
  \put(36,0){\line(0,1){4.5}}
  \put(0,4.5){\line(1,0){36}}

  \put(60,-10){\line(0,1){4.5}}
  \put(66,-10){\line(0,1){4.5}}
  \put(60,-5.5){\line(1,0){6}}
  \put(68,-10){$\cdots$}
   \put(84,-10){\line(0,1){4.5}}
  \put(90,-10){\line(0,1){4.5}}

   \put(84,-5.5){\line(1,0){6}}
  \put(108,-5.5){\line(1,0){6}}
   \put(92,-10){$\cdots$}
     \put(108,-10){\line(0,1){4.5}}
  \put(114,-10){\line(0,1){4.5}}
\put(116,-10){$\cdots$}
 
   \put(108,0){\line(0,1){4.5}}
  \put(90,0){\line(0,1){4.5}}
  \put(84,0){\line(0,1){4.5}}
  \put(66,0){\line(0,1){4.5}}
  \put(66,4.5){\line(1,0){42}}

  \put(132,-10){\line(0,1){4.5}}
  \put(138,-10){\line(0,1){4.5}}
  \put(132,-5.5){\line(1,0){6}}
  \put(140,-10){$\cdots$}
   \put(156,-10){\line(0,1){4.5}}
  \put(162,-10){\line(0,1){4.5}}
    \put(168,-10){\line(0,1){4.5}}
  \put(174,-10){\line(0,1){4.5}}
   \put(156,-5.5){\line(1,0){6}}
  \put(168,-5.5){\line(1,0){6}}
 
 
   \put(174,0){\line(0,1){4.5}}
  \put(168,0){\line(0,1){4.5}}
  \put(162,0){\line(0,1){4.5}}
  \put(156,0){\line(0,1){4.5}}
  \put(138,0){\line(0,1){4.5}}
  \put(138,4.5){\line(1,0){36}}
  \put(176,-4){$\cdot$}
\end{picture}

\indent
\begin{proof}[Proof of Lemma \ref{lemm:singelton}] 
If there is an odd interval block  inside of the set $\{2,\dots,2r-1\}$, then 
it  splits 
\begin{picture}(44,3.5)(1,0)
  \put(2,0){\line(0,1){4.5}}
  \put(8,0){\line(0,1){4.5}}
  \put(14,0){\line(0,1){4.5}}
  \put(20,0){\line(0,1){4.5}}
\put(21.5,0){$\cdots$}
  \put(38,0){\line(0,1){4.5}}
  \put(44,0){\line(0,1){4.5}}
  \put(2,4.5){\line(1,0){6}}
  \put(14,4.5){\line(1,0){6}}
  \put(38,4.5){\line(1,0){6}}
\end{picture} in two disjoint parts A and B, which cannot be connected by interval partitions; so that cannot give
us $\hat{1}_{2r}$ 
see Figure \ref{fig:splitsingelton} (a), (b). We have a similar
situation, if an even block starts with an odd element, 
see Figure \ref{fig:splitsingelton}  (c). 
\begin{figure}[h]
\begin{subfigure}[t]{0.3\textwidth}
 
\hspace{7em}  (a)

\vspace{1em} 

 \centering 
 
\begin{picture}(44,3.5)(1,0)
  \put(2,0){\line(0,1){4.5}}
  \put(8,0){\line(0,1){4.5}}
  \put(14,0){\line(0,1){4.5}}
  \put(20,0){\line(0,1){4.5}}
\put(21.5,0){$\cdots$}
  \put(38,0){\line(0,1){4.5}}
  \put(44,0){\line(0,1){4.5}}
  \put(50,0){\line(0,1){4.5}}
  \put(56,0){\line(0,1){4.5}}
    \put(62,0){\line(0,1){4.5}}
  \put(68,0){\line(0,1){4.5}}
  \put(70,0){$\cdots$}
    \put(85,0){\line(0,1){4.5}}
  \put(91,0){\line(0,1){4.5}}
  
  \put(2,4.5){\line(1,0){6}}
  \put(14,4.5){\line(1,0){6}}
  \put(38,4.5){\line(1,0){6}}
 \put(50,4.5){\line(1,0){6}}
  \put(62,4.5){\line(1,0){6}}
  \put(85,4.5){\line(1,0){6}}

     \put(2,8){\line(0,1){10}}
   \put(44,8){\line(0,1){10}}
   \put(2,18){\line(1,0){42}}
   \put(2,8){\line(1,0){42}}
   \put(21.5,9){A}
   
   \put(50,8){\line(0,1){4.5}}
   \put(56,8){\line(0,1){10}}
   \put(91,8){\line(0,1){10}}
   \put(56,18){\line(1,0){35}}
   \put(56,8){\line(1,0){35}}
   \put(70,9){B}
\end{picture}

 \end{subfigure}
~
  \begin{subfigure}[t]{0.3\textwidth}
\hspace{7em}  (b)
 
\vspace{1em}  
 \centering
\begin{picture}(44,3.5)(1,0)
  \put(2,0){\line(0,1){4.5}}
  \put(8,0){\line(0,1){4.5}}
  \put(14,0){\line(0,1){4.5}}
  \put(20,0){\line(0,1){4.5}}
\put(21.5,0){$\cdots$}
  \put(38,0){\line(0,1){4.5}}
  \put(44,0){\line(0,1){4.5}}
  \put(50,0){\line(0,1){4.5}}
  \put(56,0){\line(0,1){4.5}}
  
  \put(2,4.5){\line(1,0){6}}
  \put(14,4.5){\line(1,0){6}}
  \put(38,4.5){\line(1,0){6}}
 \put(50,4.5){\line(1,0){6}}

     \put(2,8){\line(0,1){10}}
   \put(44,8){\line(0,1){10}}
   \put(2,18){\line(1,0){42}}
   \put(2,8){\line(1,0){42}}
   \put(21.5,9){A}
   
   \put(50,8){\line(0,1){4.5}}
   \put(56,8){\line(0,1){4.5}}
   \put(73,8){\line(0,1){4.5}}
   \put(79,8){\line(0,1){4.5}}
    \put(85,8){\line(0,1){4.5}}
   \put(50,13){\line(1,0){35}}
   \put(57.5,0){$\cdots$}
     \put(73,0){\line(0,1){4.5}}
  \put(79,0){\line(0,1){4.5}}
    \put(73,4.5){\line(1,0){6}}
       \put(85,0){\line(0,1){4.5}}
  \put(91,0){\line(0,1){4.5}}
    \put(85,4.5){\line(1,0){6}}
\put(92.5,0){$\cdots$}
    \put(108,0){\line(0,1){4.5}}
  \put(114,0){\line(0,1){4.5}}
      \put(108,4.5){\line(1,0){6}}

        \put(91,8){\line(0,1){10}}
   \put(114,8){\line(0,1){10}}
   \put(91,18){\line(1,0){23}}
   \put(91,8){\line(1,0){23}}
   \put(100,9){B}
\end{picture}

 \end{subfigure}
 ~
  \begin{subfigure}[t]{0.3\textwidth}
\hspace{7em}  (c)
 
\vspace{1em}  
 \centering
\begin{picture}(44,3.5)(1,0)
  \put(2,0){\line(0,1){4.5}}
  \put(8,0){\line(0,1){4.5}}
  \put(14,0){\line(0,1){4.5}}
  \put(20,0){\line(0,1){4.5}}
\put(21.5,0){$\cdots$}
  \put(38,0){\line(0,1){4.5}}
  \put(44,0){\line(0,1){4.5}}
  \put(50,0){\line(0,1){4.5}}
  \put(56,0){\line(0,1){4.5}}
  
  \put(2,4.5){\line(1,0){6}}
  \put(14,4.5){\line(1,0){6}}
  \put(38,4.5){\line(1,0){6}}
 \put(50,4.5){\line(1,0){6}}

     \put(2,8){\line(0,1){10}}
   \put(44,8){\line(0,1){10}}
   \put(2,18){\line(1,0){42}}
   \put(2,8){\line(1,0){42}}
   \put(21.5,9){A}
   
   \put(50,8){\line(0,1){4.5}}
   \put(56,8){\line(0,1){4.5}}
   \put(73,8){\line(0,1){4.5}}
   \put(79,8){\line(0,1){4.5}}
   \put(50,13){\line(1,0){29.5}}
   \put(57.5,0){$\cdots$}
     \put(73,0){\line(0,1){4.5}}
  \put(79,0){\line(0,1){4.5}}
    \put(73,4.5){\line(1,0){6}}
       \put(85,0){\line(0,1){4.5}}
  \put(91,0){\line(0,1){4.5}}
    \put(85,4.5){\line(1,0){6}}
\put(92.5,0){$\cdots$}
    \put(108,0){\line(0,1){4.5}}
  \put(114,0){\line(0,1){4.5}}
      \put(108,4.5){\line(1,0){6}}

        \put(85,8){\line(0,1){10}}
   \put(114,8){\line(0,1){10}}
   \put(85,18){\line(1,0){29}}
   \put(85,8){\line(1,0){29}}
   \put(95,9){B}
\end{picture}

 \end{subfigure}
\caption{Some blocks inside of $\onetwo{n}$}
\label{fig:splitsingelton}
\end{figure}
Hence the condition $\pi \vee \onetwo{r}=\hat{1}_{2r}$
forces $2i$ and $2i + 1$ to lie in the same block. 
This  implies that $\pi$ consists the maximal partition $\pi=$ or $\pi$  has 
exactly two odd blocks  on the edges $O_1,O_2$ and  even intervals $E$,  starting with even elements. 
More precisely the blocks have the structure  in Figure  \ref{fig:typeIIbad} or equivalently, $\pi\geq 
\begin{picture}(56,6.5)(1,0)
  \put(2,0){\line(0,1){4.5}}
  \put(8,0){\line(0,1){4.5}}
  \put(14,0){\line(0,1){4.5}}
  \put(20,0){\line(0,1){4.5}}
  \put(26,0){\line(0,1){4.5}}
  \put(44,0){\line(0,1){4.5}}
  \put(50,0){\line(0,1){4.5}}
  \put(56,0){\line(0,1){4.5}}
\put(28.0,0){$\cdots$}
  \put(8,4.5){\line(1,0){6}}
  \put(20,4.5){\line(1,0){6}}
  \put(44,4.5){\line(1,0){6}}
\end{picture}$. 
This proves our claim and hence the assertion is shown.
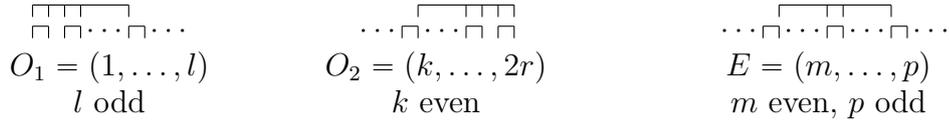
\begin{figure}[h]
\begin{align*}
  \begin{array}{ccccc}
\begin{picture}(32,6.5)(1,0)
  \put(0,0){\line(0,1){4.5}}
  \put(6,0){\line(0,1){4.5}}
  \put(12,0){\line(0,1){4.5}}
  \put(18,0){\line(0,1){4.5}}
  \put(36,0){\line(0,1){4.5}}
  \put(42,0){\line(0,1){4.5}}
\put(20,0){$\cdots$}
  \put(0,4.5){\line(1,0){6}}
  \put(12,4.5){\line(1,0){6}}
  \put(36,4.5){\line(1,0){6}}
  \put(44,0){$\cdots$}

 
   \put(0,8){\line(0,1){4.5}}
  \put(6,8){\line(0,1){4.5}}
  \put(12,8){\line(0,1){4.5}}
  \put(18,8){\line(0,1){4.5}}
  \put(36,8){\line(0,1){4.5}}
  \put(0,12.5){\line(1,0){36}}
\end{picture} &     & 
\begin{picture}(32,3.5)(1,0)
 \put(0,0){$\cdots$}
  \put(16,0){\line(0,1){4.5}}
  \put(22,0){\line(0,1){4.5}}
  \put(16,4.5){\line(1,0){6}}
  \put(24,0){$\cdots$}
   \put(40,0){\line(0,1){4.5}}
  \put(46,0){\line(0,1){4.5}}
    \put(52,0){\line(0,1){4.5}}
  \put(58,0){\line(0,1){4.5}}
   \put(40,4.5){\line(1,0){6}}
  \put(52,4.5){\line(1,0){6}}
 
 
   \put(58,8){\line(0,1){4.5}}
  \put(52,8){\line(0,1){4.5}}
  \put(46,8){\line(0,1){4.5}}
  \put(40,8){\line(0,1){4.5}}
  \put(22,8){\line(0,1){4.5}}
  \put(22,12.5){\line(1,0){36}}
\end{picture}
 & &
\begin{picture}(32,6.5)(1,0) 
 \put(0,0){$\cdots$}
  \put(16,0){\line(0,1){4.5}}
  \put(22,0){\line(0,1){4.5}}
  \put(16,4.5){\line(1,0){6}}
  \put(24,0){$\cdots$}
   \put(40,0){\line(0,1){4.5}}
  \put(46,0){\line(0,1){4.5}}

   \put(64,4.5){\line(1,0){6}}
  \put(40,4.5){\line(1,0){6}}
   \put(48,0){$\cdots$}
     \put(64,0){\line(0,1){4.5}}
  \put(70,0){\line(0,1){4.5}}
\put(72,0){$\cdots$}
 
   \put(64,8){\line(0,1){4.5}}
  \put(46,8){\line(0,1){4.5}}
  \put(40,8){\line(0,1){4.5}}
  \put(22,8){\line(0,1){4.5}}
  \put(22,12.5){\line(1,0){42}}
\end{picture}
\\
   \hspace{2em} O_1=(1,\dots, l) &&  \hspace{2em} O_2=(k,\dots,2r) &&  \hspace{4em}E= (m,\dots,p)
   \\
   \hspace{2em} l \text{ odd} && \hspace{2em} k \text{ even}  &&  \hspace{4em} m \text{ even, } p \text{ odd}
  \end{array}
\end{align*}
\caption{The main structure of blocks which satisfy  $\pi\vee \onetwo{n}=\hat{1}_{2r}$}
\label{fig:typeIIbad}
\end{figure}

\end{proof}

\begin{cor}   \label{eq:poisson}
The Boolean analogue of \cite[Proposition 11.25]{NicaSpeicher:2006}. 
Let  $\hat\pi$ be the image of $\pi\in\NC(r+1)$ under the bijection introduced in Lemma~\ref{lemm:singelton}. 
 
  Let  $X\in \A$, with cumulants
sequence $(\beta_{n})_{n\geq 1}$. Then the cumulants of $X^2$ are given as follows
  $$
    K_r(X^2) = \sum_{ \pi\in \NC(r+1)}\prod_{B\in \hat \pi}\beta_{\abs B}.
$$
In particular if $X$ is a  Boolean Poisson random variable, with  rate $\lambda$ and the jump size $\alpha$, then 
$$
    K_r(X^2) = \alpha^{2r}\sum_{ \pi\in \NC(r+1)}\lambda^{\abs{ \hat{\pi}}}= \alpha^{2r}\sum_{ \pi\in \NC(r+1)}\lambda^{\abs \pi}=\alpha^{2r}\lambda(\lambda+1)^{r},
$$
\i.e. $X^2$ has the Boolean Poisson distribution, with  rate $\lambda$ and the jump size $\alpha^2(\lambda+1)$.

\end{cor}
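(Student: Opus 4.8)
The plan is to read off $K_r(X^2)$ from the Boolean cumulant-of-products formula, Theorem~\ref{thm:krawczyk}, applied to the $r$-fold product $X^2=X\cdot X$, and then to transport the resulting sum over $\NC(2r)$ to a sum over $\NC(r+1)$ via the isomorphism of Lemma~\ref{lemm:singelton}.

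Concretely, I would set $n=2r$ and $X_1=\dots=X_{2r}=X$; writing out $X^2=X\cdot X$ in each of the $r$ slots, the interval partition attached to this grouping in Theorem~\ref{thm:krawczyk} is exactly the standard matching $\rho=\onetwo{r}=\{(1,2),(3,4),\dots,(2r-1,2r)\}$. Since all arguments coincide, $K_\pi(X,\dots,X)=\prod_{B\in\pi}K_{\abs{B}}(X)=\prod_{B\in\pi}\beta_{\abs{B}}$ by \eqref{eq:DefinicjaProduktuKumulant}, so Theorem~\ref{thm:krawczyk} yields
$$
K_r(X^2)=\sum_{\substack{\pi\in\NC(2r)\\ \pi\vee\onetwo{r}=\hat{1}_{2r}}}K_\pi(X,\dots,X)=\sum_{\substack{\pi\in\NC(2r)\\ \pi\vee\onetwo{r}=\hat{1}_{2r}}}\ \prod_{B\in\pi}\beta_{\abs{B}}.
$$
By Lemma~\ref{lemm:singelton} the index set $\{\pi\in\NC(2r):\pi\vee\onetwo{r}=\hat{1}_{2r}\}$ is precisely $\{\hat\pi:\pi\in\NC(r+1)\}$, and re-indexing the last sum along $\pi\mapsto\hat\pi$ gives at once the general formula $K_r(X^2)=\sum_{\pi\in\NC(r+1)}\prod_{B\in\hat\pi}\beta_{\abs{B}}$.

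For the Boolean Poisson specialization I would substitute $\beta_n=K_n(X)=\alpha^n\lambda$ (recalled earlier for the Boolean Poisson law). Since every $\hat\pi$ is a partition of $[2r]$, this gives $\prod_{B\in\hat\pi}\beta_{\abs{B}}=\alpha^{\sum_{B\in\hat\pi}\abs{B}}\lambda^{\abs{\hat\pi}}=\alpha^{2r}\lambda^{\abs{\hat\pi}}$. The isomorphism of Lemma~\ref{lemm:singelton} identifies $\NC(r+1)$ with these $\pi$ by fusing the $2r$ points into the $r+1$ consecutive groups $\{1\},\{2,3\},\{4,5\},\dots,\{2r-2,2r-1\},\{2r\}$, compatibly with the block structure displayed in Corollary~\ref{wniosekdolematu}; in particular it carries blocks to blocks bijectively, so $\abs{\hat\pi}=\abs{\pi}$, and therefore $K_r(X^2)=\alpha^{2r}\sum_{\pi\in\NC(r+1)}\lambda^{\abs{\pi}}$. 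To finish I would count interval partitions of $\{1,\dots,r+1\}$ by block number: those with $k$ blocks are the compositions of $r+1$ into $k$ positive parts, so there are $\binom{r}{k-1}$ of them, whence $\sum_{\pi\in\NC(r+1)}\lambda^{\abs{\pi}}=\sum_{k\ge1}\binom{r}{k-1}\lambda^k=\lambda(1+\lambda)^r$. This gives $K_r(X^2)=\lambda\bigl(\alpha^2(\lambda+1)\bigr)^r$, which for every $r$ coincides with the $r$-th Boolean cumulant $(\alpha')^r\lambda$ of the Boolean Poisson law $\nu(\lambda,\alpha')$ with $\alpha'=\alpha^2(\lambda+1)$; since Boolean cumulants determine the distribution, $X^2\sim\nu(\lambda,\alpha^2(\lambda+1))$.

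I do not expect a genuine obstacle: the substantive combinatorics — pinning down which $\pi\in\NC(2r)$ survive the condition $\pi\vee\onetwo{r}=\hat{1}_{2r}$ and describing their block structure — has already been carried out in Lemma~\ref{lemm:singelton} and Corollary~\ref{wniosekdolematu}. The care needed here is only bookkeeping: tracking block sizes through $\pi\mapsto\hat\pi$ for the general identity, and, for the Poisson case, the observation that this bijection preserves the number of blocks together with the elementary composition count $\sum_{\pi\in\NC(r+1)}\lambda^{\abs{\pi}}=\lambda(1+\lambda)^r$.
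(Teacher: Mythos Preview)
Your argument is correct and is exactly the intended one: the paper states this result as an immediate corollary of Lemma~\ref{lemm:singelton} without writing out a proof, and what you have done---apply Theorem~\ref{thm:krawczyk} with $\rho=\onetwo{r}$, re-index via the bijection of Lemma~\ref{lemm:singelton}, observe that this bijection preserves the number of blocks, and finish with the composition count $\sum_{\pi\in\NC(r+1)}\lambda^{\abs\pi}=\lambda(1+\lambda)^r$---is precisely the routine verification the paper leaves to the reader.
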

\noindent
Let $\ED$ be the conditional expectation in  the diagonal algebra
\begin{equation}
  \label{eq:ED}
\begin{aligned}
  \ED:M_n(\mathcal{A})&\to M_n(\C)\\
  A &\mapsto \sum_{i=1}^n E_i\state^{(n)}(A)E_i,
\end{aligned}
\end{equation}
where by $E_i$ we denote the projection matrix onto the $i$-th unit vector
and $\state^{(n)}(A)_{ij}=\state(a_{i,j})$ is the entry-wise state. We also use notation $\ED[ \pi]$, where arguments of $\ED$
 are distributed according to the blocks of partition $\pi$, but the cumulants
are nested inside each other according to the nestings of the blocks of noncrossing $\pi$. For example, if $\pi=\{(1,4),(2,3),(5,6)\}$, then 
$$\ED[ \pi](A_1,\dots,A_6)=\ED(A_1\ED(A_2A_3) A_4)\ED(A_5A_6),$$ 
for more details  see Speicher's \cite{Speicher:1998} theory of operator-valued free probability.

\noindent
The following lemma connects interval partition  with conditional expectation $\ED$  and is the key to
the main result.

\begin{lemm}[\cite{EjsmontLehner:2017}, Lemma 4.2] \label{diagonalnawartoscoczekiwana}
  \label{lem:kreweras}
  For scalar matrices $A\in M_n(\C)$ we have
  \begin{enumerate}[(i)]
   \item \label{it:kreweras1}
    $$
      \sum_{i=1}^n E_iA_1E_iA_2\dotsm E_iA_rE_i
    = \ED(A_1)\ED(A_2)\dotsm \ED(A_r).
    $$
   \item  \label{it:kreweras2}
    Let $\pi\in\NC(r)$, then
    $$
    \sum_{\ker\underline{i}\geq\pi} \ED (A_1E_{i_1}A_2E_{i_2}\dotsm A_rE_{i_r}A_{r+1})
    =\ED[\pi^c] (A_1,A_2,\dots,A_{r+1}).
    $$
  \end{enumerate}
\end{lemm}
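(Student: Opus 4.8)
The plan is to handle the two identities separately. Part~(i) is a direct telescoping computation with the rank-one projections $E_i$, and part~(ii) I would prove by induction on the number of blocks of the noncrossing partition $\pi$, peeling off an innermost interval \emph{block} and using part~(i) to collapse the repeated projections that block carries; the Kreweras complement $\pi^c$ does the bookkeeping. Note that since $A$ is a scalar matrix, $\ED(A)=\sum_i E_i A E_i=\diag(A)$ simply extracts the diagonal.

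For part~(i), write $E_i=e_ie_i^{\ast}$, so that $E_iAE_i=(A)_{ii}E_i$ for every scalar matrix $A$. Telescoping gives $E_iA_1E_iA_2\cdots A_rE_i=\bigl(\prod_{k=1}^r(A_k)_{ii}\bigr)E_i$, and summing over $i$ produces the diagonal matrix with $(i,i)$-entry $\prod_{k=1}^r(A_k)_{ii}$. Since diagonal matrices multiply entrywise, this is exactly $\ED(A_1)\cdots\ED(A_r)$.

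For part~(ii), since $\ED$ is linear I would first pull it out of the sum and work with $S_\pi:=\sum_{\ker\underline i\ge\pi}A_1E_{i_1}\cdots A_rE_{i_r}A_{r+1}$, whose diagonal is the left-hand side. The constraint $\ker\underline i\ge\pi$ says $i_t$ is constant on each block of $\pi$ while the labels of distinct blocks range freely and independently over $[n]$. In the base case $\pi=\hat 1_r$ all $i_t$ equal a common $j$, and part~(i) gives $E_jA_2E_j\cdots A_rE_j=\bigl(\prod_{s=2}^r(A_s)_{jj}\bigr)E_j$; summing over $j$ and applying $\ED$ yields $\ED\bigl(A_1\,\ED(A_2)\cdots\ED(A_r)\,A_{r+1}\bigr)$, which is exactly $\ED[\pi^c]$ for $\pi^c=\{(1,r+1),(2),\dots,(r)\}$. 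For the inductive step, a noncrossing partition $\pi\ne\hat 1_r$ has a proper interval block $B=\{k,k+1,\dots,m\}\subsetneq[r]$ of consecutive indices (a deepest block in the nesting order is necessarily an interval). Because $B$ is an interval the projections it carries are contiguous and all equal to a common label $j$, so part~(i) collapses $E_jA_{k+1}E_j\cdots A_mE_j=\bigl(\prod_{s=k+1}^m(A_s)_{jj}\bigr)E_j$, and summing over the free label $j$ gives $\sum_j\bigl(\prod_{s=k+1}^m(A_s)_{jj}\bigr)A_kE_jA_{m+1}=A_k\,\ED(A_{k+1})\cdots\ED(A_m)\,A_{m+1}=:\hat A$, using $\sum_j D_{jj}A_kE_jA_{m+1}=A_kDA_{m+1}$ for diagonal $D$. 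Hence $S_\pi$ equals the analogous sum $S_{\pi'}$ for $\pi':=\pi\setminus B\in\NC(r')$ (one block fewer) once $A_k,\dots,A_{m+1}$ are replaced by the single matrix $\hat A$, and the induction hypothesis applies to $S_{\pi'}$.

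The main obstacle is checking that the induction closes on the Kreweras side, i.e.\ that $\ED[(\pi')^c](\dots,\hat A,\dots)=\ED[\pi^c](A_1,\dots,A_{r+1})$. Using the interlacing $\bar1,1,\bar2,2,\dots,\bar r,r,\overline{r+1}$ that defines $\pi^c$, so that argument $A_t$ sits at the dual point $\bar t$, the dual points $\overline{k+1},\dots,\overline m$ lie strictly inside the span of the interval block $B$; joining any two of them would enclose an element of $B$ and cross $\pi$, so each is a singleton of $\pi^c$ contributing the factors $\ED(A_{k+1}),\dots,\ED(A_m)$. The flanking dual points $\bar k$ and $\overline{m+1}$ are joined in $\pi^c$ and are consecutive in their common block, with exactly these singletons nested between them; deleting $B$ from $\pi$ therefore contracts $\pi^c$ by removing the trapped singletons and fusing $\bar k,\overline{m+1}$ into a single dual point carrying $A_k\,\ED(A_{k+1})\cdots\ED(A_m)\,A_{m+1}=\hat A$. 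Establishing this local compatibility of the Kreweras complement with interval-block removal — the step where noncrossingness is essential — identifies $(\pi')^c$ with the contraction of $\pi^c$ and makes the two nested $\ED$-expressions coincide, completing the induction.
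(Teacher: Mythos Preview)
The paper does not give its own proof of this lemma: it is quoted verbatim from \cite{EjsmontLehner:2017}, Lemma~4.2, with no argument supplied. So there is nothing in the present paper to compare your proof against.

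Your argument itself is sound. Part~(i) is the obvious telescoping with $E_iAE_i=(A)_{ii}E_i$, and for part~(ii) your induction on the number of blocks---peeling off an innermost interval block, collapsing it with part~(i), and checking that the Kreweras complement contracts compatibly---is the standard route and is carried out correctly. One remark on notation: in this paper $\NC(r)$ denotes \emph{interval} partitions, not general noncrossing partitions, so every block of $\pi$ is already an interval and there is no nesting; your more general argument (written as if $\pi$ were an arbitrary noncrossing partition, which is the setting of the cited source) of course specialises to that case. In the interval setting the complement $\pi^c$ has only one non-singleton block (the one containing $\bar 1$), which makes the ``Kreweras bookkeeping'' step you flagged essentially trivial: removing any block $B=\{k,\dots,m\}$ of $\pi$ deletes the singletons $\overline{k+1},\dots,\overline{m}$ of $\pi^c$ and merges the adjacent positions $\bar k,\overline{m+1}$, both of which lie in that unique large block, so the contracted complement is manifestly $(\pi')^c$.
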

\noindent
For $\pi^c \in \ICop(n)$ we define by $\EB[k]:M_n^{ k}(\mathcal{A})\to M_n(\C)$
the corresponding (operator-valued) \emph{Hadamard cumulants} on the closure of $\pi^c$. 
Let $B=(i_1,\dots, i_k)\in \overline {\pi^c}$, then  

\begin{align*}
\begin{aligned}
  \EB[\abs B](A_{i_1}, \dots , A_{i_k}) &= 
  \left\{ \begin{array}{ll}
\ED(A_{i_1} \dots  A_{i_k}) & \textrm{if  $ B=(1,\dots, i_k)$ i.e. contains $1$}\\
\ED(A_{i_1}\odot \dots \odot A_{i_k}) & \textrm{if $B$ doesn't contain $1$ }
\end{array} \right.
\end{aligned}
\end{align*}
where  $\odot$ is the Hadamard product of matrices.
\begin{exam} \label{lemm:condisingelton} 
(1). If $\pi=\{(1,2,3),(4,5,6)\}$, then  $\overline {\pi^c}=\{(1,4),(2,3),(5,6)\}$, and 
$$\EB[\overline {\pi^c}](A_1,\dots,A_6)=\ED(A_1\ED(A_2\odot A_3) A_4)\ED(A_5\odot A_6).$$ 
\noindent
(2). For the singleton partition and scalar matrices $A_i\in M_n(\C)$ we have 
\begin{align*}
\ED[] ({A_1,\dots,A_r})
&=\ED (A_1)\dots \ED ( A_r)
\\&=
\ED ({A_1\odot\dots\odot A_r}).
\end{align*}
\end{exam}
\noindent
We will use the following result as the main technical tool   to express the cumulants of quadratic forms of Boolean
random variables in terms of the diagonal map of matrices.
\begin{prop} \label{prop:CykliczneVariancja}
  Let $\X_1, \X_2,\dots, \X_n\in \A$ be a family of  Boolean independent random variables,
   $A=[a_{i,j}]_{i,j=1}^n\in M_n^{sa}(\C)$ 
  and $T_n=\sum_{i,j} a_{i,j}X_iX_j$ a quadratic form. 
    The cumulants of $T_n$ are given by
  \begin{enumerate}[(i)]
   \item \label{it:cyclic3}
\begin{align}  \label{eq:kumulantsamplevariancenotiid}
&K_r(T_n)=\sum_{i_0, i_1,\dots,i_r\in[n]} 
            \Tr(\J E_{i_0} AE_{i_1}\dots AE_{i_r})\,
            \sum_{\substack{ \pi\in \NC(2r)\\ 
                \pi \vee \onetwo{r}=\hat{1}_{2r}}}
            K_\pi(X_{i_0},X_{i_1},X_{i_1},\dots,X_{i_{r-1}},X_{i_{r-1}},X_{i_r}).
    \end{align}
    \item If we assume in addition that $X_i$ are identically distributed, then 
    \begin{align}  \label{eq:kumulantsamplevariance}
      K_r(T_n)&=\sum_{ \pi\in \NC(r+1)}
     \sum_{\ker\underline{i}\geq \pi}\Tr(\J E_{i_0} AE_{i_1}\dots AE_{i_r})
 K_{\hat\pi}(X),
\intertext{which can be expressed as a  convolution with Hadamard cumulants} 
 &=\sum_{ \pi\in \NC(r+1)}\Tr (\EB[\overline{\Krewl \pi } ] (\underbrace{\J,A,\dots,A}_{r+1}))K_{\hat\pi}(X), \label{eq:kumulantsamplevariance2}
    \end{align} 
 where   $\hat\pi$ is the image of $\pi\in\NC(r+1)$ under the bijection introduced in Lemma~\ref{lemm:singelton}. 
  \end{enumerate}
\end{prop}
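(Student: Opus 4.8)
The plan is to compute $K_r(T_n)$ by first expanding the quadratic form $T_n=\sum_{i,j}a_{i,j}X_iX_j$ and then applying the product formula of Theorem \ref{thm:krawczyk}. Write $T_n=\sum a_{i,j}X_iX_j$, so $T_n$ is a sum of two-letter words $X_iX_j$; then by multilinearity $K_r(T_n)=\sum a_{i_1,j_1}\cdots a_{i_r,j_r}K_r(X_{i_1}X_{j_1},\dots,X_{i_r}X_{j_r})$, where the interval partition $\rho$ relevant to Theorem \ref{thm:krawczyk} is $\rho=\{(1,2),(3,4),\dots,(2r-1,2r)\}=\onetwo{r}$ acting on $2r$ letters. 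Hence $K_r(X_{i_1}X_{j_1},\dots)=\sum_{\pi\in\NC(2r),\ \pi\vee\onetwo{r}=\hat 1_{2r}}K_\pi(X_{i_1},X_{j_1},\dots,X_{i_r},X_{j_r})$. The first step, then, is a careful bookkeeping of the index sums: one must reorganize $\sum_{i_1,j_1,\dots}a_{i_1,j_1}\cdots a_{i_r,j_r}$ so that the matrix part becomes a trace. By the vanishing of mixed cumulants \eqref{eq:kerh>=pi}, only terms with $\ker(\text{the index word})\ge\pi$ survive, and since $\pi\vee\onetwo{r}=\hat1_{2r}$ forces $j_k=i_{k+1}$ (by Lemma \ref{lemm:singelton}, the blocks of $\pi$ link position $2k$ with position $2k+1$), the surviving contributions have $j_k=i_{k+1}$. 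Renaming $i_0:=i_1$ and the common values $i_k:=j_k=i_{k+1}$, the matrix coefficient becomes $\prod_k a_{i_{k-1},i_k}$ with a cyclic wraparound that is exactly $\Tr(\J E_{i_0}AE_{i_1}\cdots AE_{i_r})$ (here $\J=J_n$ supplies the outer "closing" of the word, reflecting that the first and last letters $X_{i_0}$ and $X_{i_r}$ play a distinguished, non-paired role). This yields part \eqref{it:cyclic3}, formula \eqref{eq:kumulantsamplevariancenotiid}.

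For part (ii), the assumption that the $X_i$ are identically distributed means every univariate cumulant $K_{\abs B}(X_{i}:i\in B)$ depends only on $\abs B$, so $K_\pi$ in \eqref{eq:kumulantsamplevariancenotiid} equals $\prod_{B\in\pi}K_{\abs B}(X)=K_{\hat\pi}(X)$ once we use the bijection $\pi\leftrightarrow\hat\pi$ of Lemma \ref{lemm:singelton} that identifies $\{\pi\in\NC(2r):\pi\vee\onetwo{r}=\hat1_{2r}\}$ with $\NC(r+1)$ (Corollary \ref{wniosekdolematu} describes the block structure: one odd end-block, interior even-to-odd intervals, one odd end-block, which under the bijection become the blocks of an interval partition of $r+1$). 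The point is that the internal cumulant structure of $\pi$ — which reflects the nesting geometry — is precisely what the product $K_{\hat\pi}(X)$ records. Thus \eqref{eq:kumulantsamplevariancenotiid} collapses to $\sum_{\pi\in\NC(r+1)}K_{\hat\pi}(X)\sum_{\ker\underline i\ge\pi}\Tr(\J E_{i_0}AE_{i_1}\cdots AE_{i_r})$, which is \eqref{eq:kumulantsamplevariance}; here the inner index sum is taken over multiindices $\underline i=i_0i_1\dots i_r$ whose kernel refines the interval partition $\pi$ viewed on $r+1$ points.

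Finally, \eqref{eq:kumulantsamplevariance2} follows by identifying the inner sum $\sum_{\ker\underline i\ge\pi}\Tr(\J E_{i_0}AE_{i_1}\cdots AE_{i_r})$ with a trace of Hadamard cumulants. This is where Lemma \ref{lem:kreweras} does the work: part \eqref{it:kreweras1} converts the diagonal insertions $E_{i}A_1E_i\cdots A_rE_i$ into products of $\ED$'s, and part \eqref{it:kreweras2} shows that summing $\ED(A_1E_{i_1}\cdots A_rE_{i_r}A_{r+1})$ over $\ker\underline i\ge\pi$ gives $\ED[\pi^c](A_1,\dots,A_{r+1})$; applying this with the arguments $(\J,A,\dots,A)$ and translating $\ED[\pi^c]$ into the Hadamard-cumulant map $\EB[\overline{\Krewl\pi}]$ (recalling from the definition of $\EB$ that blocks not containing $1$ contribute Hadamard products $\odot$, exactly as produced by collapsing repeated indices) yields $\Tr(\EB[\overline{\Krewl\pi}](\J,A,\dots,A))$.

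The main obstacle I anticipate is the careful index/combinatorial translation in the first step: one must verify precisely which identifications among the $i$'s and $j$'s are forced by $\pi\vee\onetwo{r}=\hat1_{2r}$ together with $\ker\ge\pi$, and confirm that the resulting coefficient assembles into the cyclic trace $\Tr(\J E_{i_0}AE_{i_1}\cdots AE_{i_r})$ with the correct placement of $\J$ (rather than, say, a plain $\Tr$ or a shifted product) — in particular that the two "end" letters $X_{i_0}$ and $X_{i_r}$ are the ones left unpaired by $\onetwo{r}$. Matching this against Lemma \ref{lem:kreweras}\eqref{it:kreweras2}, whose left side has the specific pattern $A_1E_{i_1}A_2\cdots A_rE_{i_r}A_{r+1}$ with $r$ projections and $r+1$ matrices, is the delicate point; once the dictionary $\pi\in\NC(r+1)\leftrightarrow\hat\pi$ and the $\Krewl{}$/closure bookkeeping of Figure \ref{fig:examplepartition} are lined up correctly, the three formulas follow in sequence.
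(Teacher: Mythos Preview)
Your proposal is correct and follows essentially the same route as the paper: expand $T_n$ by multilinearity, apply the product formula (Theorem~\ref{thm:krawczyk}) with $\rho=\onetwo{r}$, use Lemma~\ref{lemm:singelton} together with vanishing of mixed cumulants to force $j_k=i_{k+1}$ and thereby obtain the trace $\Tr(\J E_{i_0}AE_{i_1}\cdots AE_{i_r})$, then in the i.i.d.\ case invoke the bijection $\hat\pi\leftrightarrow\pi\in\NC(r+1)$ and Lemma~\ref{lem:kreweras} to pass to the Hadamard-cumulant form. The only differences are cosmetic (the paper restricts the index sum \emph{before} unfolding the product formula, you do it after), and one small slip of language: where you write ``whose kernel refines $\pi$'' you mean $\ker\underline{i}\ge\pi$, i.e.\ $\pi$ refines $\ker\underline{i}$.
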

\begin{proof}

Let $Z_{i,j}=a_{i,j}X_i X_j $, then from the definition of $T_n$ we see that 
\begin{align*}
 K_r(T_n)
 &= \sum_{i_1,i_2,\dots,i_{2r}\in[n]}
      K_r(Z_{i_{1},i_{2}},
         Z_{i_{3},i_{4}},
         \dots,
         Z_{i_{2r-1},i_{2r}}).
\intertext{
  We can apply Lemma~\ref{lemm:singelton} in the reverse direction and obtain}
 &= \sum_{\substack{
     i_1,i_2,\dots,i_{2r}  \in[n]   \\
     \ker \underline{i}\geq 
\begin{picture}(56,6.5)(1,0)
  \put(2,0){\line(0,1){4.5}}
  \put(8,0){\line(0,1){4.5}}
  \put(14,0){\line(0,1){4.5}}
  \put(20,0){\line(0,1){4.5}}
  \put(26,0){\line(0,1){4.5}}
  \put(44,0){\line(0,1){4.5}}
  \put(50,0){\line(0,1){4.5}}
  \put(56,0){\line(0,1){4.5}}
\put(28.0,0){$\cdots$}
  \put(8,4.5){\line(1,0){6}}
  \put(20,4.5){\line(1,0){6}}
  \put(44,4.5){\line(1,0){6}}
\end{picture} }}
      K_r(Z_{i_{1},i_{2}},
         Z_{i_{3},i_{4}},
         \dots,
         Z_{i_{2r-1},i_{2r}})
\\
 &= \sum_{i_0,i_1,i_2,\dots,i_r\in [n]}
      K_r(Z_{i_0,i_1},Z_{i_1,i_2},Z_{i_2,i_3}\dots ,Z_{i_{r-2},i_{r-1}},Z_{i_{r-1},i_r}).
\intertext{
  We now expand further and obtain}
 &= \sum_{i_0,i_1,i_2,\dots,i_r\in[n]}
      a_{i_0,i_1} a_{i_1,i_2} \dotsm a_{i_{r-1,i_r}}
      K_r(X_{i_0}X_{i_1},X_{i_1}X_{i_2},\dots,X_{i_{r-1}}X_{i_r}).
\intertext{
  Let us remind that $\J=\left[\begin{smallmatrix}
    1 &1\\
    1& 1
    \end{smallmatrix}\right]_n=[b_{i,j}]_{i,j=1}^n$, then }
&= \sum_{i_0,i_1,i_2,\dots,i_r\in[n]}b_{i_r,i_0}
      a_{i_0,i_1} a_{i_1,i_2} \dotsm a_{i_{r-1,i_r}}
      K_r(X_{i_0}X_{i_1},X_{i_1}X_{i_2},\dots,X_{i_{r-1}}X_{i_r})
       \\
 &=\sum_{i_0,i_1,\dots,i_r\in[n]} 
            \Tr(\J E_{i_0} AE_{i_1}AE_{i_2}\dots AE_{i_r})\, K_r(X_{i_0}X_{i_1},X_{i_1}X_{i_2},\dots,X_{i_{r-1}}X_{i_r})
      \\
 &=\sum_{i_0,i_1,\dots,i_r\in[n]} 
            \Tr(\J E_{i_0} AE_{i_1}AE_{i_2}\dots AE_{i_r})\,
            \sum_{\substack{ \pi\in \NC(2r)\\ 
                \pi \vee \onetwo{r}=\hat{1}_{2r}}}
            K_\pi(X_{i_0},X_{i_1},X_{i_1},X_{i_2},\dots,X_{i_{r-1}},X_{i_r}),
 \\
\intertext{which yields \eqref{eq:kumulantsamplevariancenotiid}. Now denoting by $\hat\pi$ the image of $\pi\in\NC(r+1)$ under the bijection introduced in Lemma~\ref{lemm:singelton}, we can rewrite this as}
 &=\sum_{ \pi\in \NC(r+1)}
 \sum_{\ker\underline{i}\geq \pi}\Tr(\J E_{i_0} AE_{i_1}AE_{i_2}\dots AE_{i_r})
 K_{\hat\pi}(X)
 \intertext{which yields  \eqref{eq:kumulantsamplevariance}. Now we use   Lemma \ref{diagonalnawartoscoczekiwana} and obtain  }
 &=\sum_{ \pi\in \NC(r+1)}\Tr (\ED[\Krewl \pi] (\underbrace{\J,A,\dots,A}_{r+1}))
 K_{\hat\pi}(X).
\intertext {We further use the special structure of $\Krewl \pi$, namely that the first block decomposes $\Krewl \pi$ into disjoint  segments,  each consisting of a singleton. On every  segment  we can apply  Remark \ref{lemm:condisingelton} and obtain \eqref{eq:kumulantsamplevariance2}.}
\end{align*}

\end{proof}
\begin{cor}   \label{eq:semisirgular}
1. In the case of the Boolean  normal distribution, with  $K_1=c$, $K_2=1$ and $K_r=0$ for $r\geq 3$, formula 
  \eqref{eq:kumulantsamplevariance} has only one contributing term $\pi=$ and 
  takes the particular  form
  \begin{equation}
    \label{eq:cumQnsemi}
    K_r(T_n) = c^2\Tr(\J A^r).
  \end{equation}
\noindent  
 In fact, we have proved the following slightly more general statement.
  Let $X_i$ be a  Boolean independent standard normal family  of  random variables
  and let $A_1,A_2,\dots,A_r \in M_n(\C)$ be
  arbitrary scalar matrices. Then  the joint cumulant of $T_k=\sum_{i,j}a^{(k)}_{i,j}X_iX_j$ is
  $$
  K_r(T_1,T_2,\dots,T_r)=
      \Tr(\J A_1 A_2 \dots A_r).
  $$

\end{cor}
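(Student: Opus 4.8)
\textbf{Proof proposal for Corollary \ref{eq:semisirgular}.}

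The plan is to specialize the general cumulant formula \eqref{eq:kumulantsamplevariance} from Proposition \ref{prop:CykliczneVariancja} to the Boolean standard normal family and identify which partitions $\pi\in\NC(r+1)$ survive. Since $K_1=c$, $K_2=1$ and $K_r=0$ for $r\geq 3$, the product of cumulants $K_{\hat\pi}(X)=\prod_{B\in\hat\pi}\beta_{\abs B}$ vanishes as soon as $\hat\pi$ has a block of size $\geq 3$. Recalling the explicit block structure of $\hat\pi$ from Lemma \ref{lemm:singelton} and Corollary \ref{wniosekdolematu} — namely that $\hat\pi$ consists of two odd ``edge'' blocks $O_1,O_2$ together with even-starting interval blocks inside — one checks that all blocks of $\hat\pi$ have size $\leq 2$ precisely when $\pi=\hat 0_{r+1}$ (the minimal partition), in which case $\hat\pi=\onetwo{r}$ is the standard matching whose blocks are the pairs $(2i,2i+1)$ together with the two singletons $\{1\}$ and $\{2r\}$. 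For that unique $\pi$, the factor $K_{\hat\pi}(X)$ equals $c^2$ (two singletons contributing $\beta_1=c$ each, and $r-1$ pairs contributing $\beta_2=1$ each). It then remains to evaluate $\sum_{\ker\underline i\geq\hat 0_{r+1}}\Tr(\J E_{i_0}AE_{i_1}\dots AE_{i_r})$; but $\ker\underline i\geq\hat 0_{r+1}$ is no constraint at all, so the sum over all $i_0,\dots,i_r\in[n]$ of $\Tr(\J E_{i_0}AE_{i_1}\dots AE_{i_r})$ telescopes via $\sum_{i}E_i=I$ to $\Tr(\J A^r)$, giving \eqref{eq:cumQnsemi}.

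For the ``slightly more general'' multilinear statement, I would run the same argument through the multilinear version of Proposition \ref{prop:CykliczneVariancja}: one re-reads the proof of that proposition with $Z^{(s)}_{i,j}=a^{(s)}_{i,j}X_iX_j$ in the $s$-th slot, so that after applying Theorem \ref{thm:krawczyk} and Lemma \ref{lemm:singelton} one arrives at
\begin{align*}
K_r(T_1,\dots,T_r)
&=\sum_{i_0,i_1,\dots,i_r\in[n]}
\Tr(\J E_{i_0}A_1E_{i_1}A_2E_{i_2}\dots A_rE_{i_r})\,
\sum_{\substack{\pi\in\NC(2r)\\ \pi\vee\onetwo{r}=\hat 1_{2r}}}
K_\pi(X_{i_0},X_{i_1},X_{i_1},\dots,X_{i_{r-1}},X_{i_{r-1}},X_{i_r}).
\end{align*}
The matrices $A_1,\dots,A_r$ only enter the trace factor and play no role in which partitions $\pi$ contribute; the partition analysis is identical, so again only the matching $\hat\pi=\onetwo{r}$ survives with weight $c^2$, and summing the trace factor over all multiindices collapses to $\Tr(\J A_1A_2\dots A_r)$.

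The main obstacle — really the only nontrivial point — is the partition bookkeeping: verifying carefully that among all $\pi\in\NC(r+1)$, the image $\hat\pi$ under the Lemma \ref{lemm:singelton} bijection has all blocks of cardinality at most $2$ \emph{only} for $\pi=\hat 0_{r+1}$, and that for this $\pi$ the two edge blocks $O_1,O_2$ are exactly the singletons $\{1\}$ and $\{2r\}$ (so they contribute $c$ rather than $1$), while the remaining $r-1$ blocks are the adjacent even-odd pairs. This is where the structural description in Figure \ref{fig:typeIIbad} is used: any non-trivial block of $\pi$ forces, under the bijection, either an edge block of size $\geq 2$ that still has odd length $\geq 3$, or an interior even-starting interval of length $\geq 3$, and in every such case a cumulant $\beta_{\abs B}$ with $\abs B\geq 3$ appears and kills the term. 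Once this is pinned down, the rest is the elementary telescoping $\sum_i E_i=I$ inside the trace.
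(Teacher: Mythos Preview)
Your argument is correct and is precisely the (implicit) route the paper takes: Corollary~\ref{eq:semisirgular} is stated without proof as a direct specialization of Proposition~\ref{prop:CykliczneVariancja}, and your partition bookkeeping plus the telescoping $\sum_i E_i=I$ is exactly what is intended.

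One notational slip worth fixing: the surviving $\hat\pi$ is the partition $\{(1),(2,3),(4,5),\dots,(2r-2,2r-1),(2r)\}$ (the minimal element of the interval in Lemma~\ref{lemm:singelton}), \emph{not} $\onetwo{r}=\{(1,2),(3,4),\dots,(2r-1,2r)\}$; your verbal description of the blocks is right, only the symbol is wrong. Also, your dichotomy ``edge block of odd length $\geq 3$ or interior block of length $\geq 3$'' tacitly skips the case $\hat\pi=\hat 1_{2r}$; this is harmless for $r\geq 2$ since then $|\hat 1_{2r}|=2r\geq 4$, but for $r=1$ the block $\{1,2\}$ survives and contributes an extra $\Tr(A)$ term, so the stated formula is literally valid only for $r\geq 2$ (the paper glosses over this as well).
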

\section{The Boolean quadratic forms}
\noindent
In this section we consider the Boolean analogue of some   characterization problems (associated with quadratic form) from classical and free probability. Our main technical tool  are the formulas introduced in the  previous paragraph.  
 The first problem is the Boolean analogue of \cite[Proposition 2.2]{HiwatashiKurodaNagisaYoshida:1999a} and \cite[Theorem 3.4]{HiwatashiKurodaNagisaYoshida:1999a} which can be formulated as follows.
 \begin{prop}
Let $\X_1, \X_2,\dots, \X_n\in \A$ be a Boolean standard normal family,  
   $A=[a_{i,j}]_{i,j=1}^n\in M_n^{sa}(\C)$ and $\lambda_1,\dots,\lambda_n$ and $\sigma(U_1),\dots,\sigma(U_n)$ are as in the proof of Lemma \ref{lemma:equvalenttwoforall}.   Then 
\begin{enumerate}
\item  the $H$-transform of the
distribution of the quadratic form 
 can be written in the
form $$H_{T_n}(z)=\sum_{j=1}^n \frac{\abs{\sigma(U_i)}^2\lambda_i z}{1-\lambda_iz} ,$$
\item  the random variable
$T_n$
has  the Boolean Poisson  distribution $\nu(\lambda,\alpha)$ if and only if the matrix $A $ has 
\begin{enumerate}
\item at least one eigenvalue  equal to $\lambda$ and   $$\sum_{i \text{ such that }\lambda_i=\lambda}\abs{\sigma(U_i)}^2=\alpha;$$
\item all eigenvalues different than $0$ and $\lambda$ have corresponding eigenvectors with the sum of the coordinates equal to zero.
\end{enumerate} 
\end{enumerate}
 \end{prop}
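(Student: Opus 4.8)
The plan is to reduce the whole proposition to the cumulant formula $K_r(T_n)=\Tr(\J A^r)$ together with the spectral identity recorded in the proof of Lemma~\ref{lemma:equvalenttwoforall}. Since $\X_1,\dots,\X_n$ form a Boolean standard normal family, one has $K_1(X_i)=1$, so the scalar $c$ in Corollary~\ref{eq:semisirgular} equals $1$ and that corollary gives $K_r(T_n)=\Tr(\J A^r)$ for every $r\ge 1$; and, exactly as computed in the proof of Lemma~\ref{lemma:equvalenttwoforall}, $\Tr(\J A^r)=\sum_{i=1}^n\abs{\sigma(U_i)}^2\lambda_i^r$. For part~(1) I would substitute this into the defining series $H_{T_n}(z)=\sum_{r\ge 1}K_r(T_n)z^r$ from \eqref{rtr}, interchange the finite sum over $i$ with the summation over $r$ (harmless, and analytically legitimate for $\abs{z}<1/\max_i\abs{\lambda_i}$), and sum the geometric series, obtaining $H_{T_n}(z)=\sum_{i=1}^n\abs{\sigma(U_i)}^2\lambda_i z/(1-\lambda_i z)$, which is the asserted expression.

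For the sufficiency half of part~(2) I would start from the formula just obtained and note that the summand indexed by $i$ vanishes whenever $\lambda_i=0$ (its numerator is $0$) and whenever $\lambda_i\notin\{0,\lambda\}$ (there $\sigma(U_i)=0$ by~(b)). Only the indices with $\lambda_i=\lambda$ survive, and by~(a) they contribute $\alpha\,\lambda z/(1-\lambda z)$; reading off the rate and jump-size parameters, this is precisely the $H$-transform of a Boolean Poisson law recalled in Section~\ref{sec:prelim}. Since a probability measure is determined by its $H$-transform, $T_n$ is then Boolean Poisson.

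For necessity I would assume $T_n$ is Boolean Poisson, so $H_{T_n}(z)=c\mu z/(1-\mu z)$ for some $c,\mu\ge 0$. Grouping the eigenvalues into their distinct values $\mu_1,\dots,\mu_p$ and setting $w_j=\sum_{i:\lambda_i=\mu_j}\abs{\sigma(U_i)}^2\ge 0$, part~(1) becomes $\sum_{j:\mu_j\neq 0}w_j\mu_j z/(1-\mu_j z)=c\mu z/(1-\mu z)$. On the left the pole locations $1/\mu_j$ with $\mu_j\neq0$ are pairwise distinct, so the $j$-th term contributes a genuine pole exactly when $w_j\neq 0$; matching with the single pole on the right forces at most one such $j$, and then necessarily $\mu_j=\mu$ and, comparing residues, $w_j=c$. (Equivalently one may compare $K_r(T_n)=\sum_j w_j\mu_j^r=c\mu^r$ for $r=1,2,\dots$ and invoke invertibility of a Vandermonde matrix on the distinct nonzero $\mu_j$.) Renaming $\mu$ as $\lambda$ this yields $\sum_{i:\lambda_i=\lambda}\abs{\sigma(U_i)}^2=c$, which is~(a) after identifying $c$ with $\alpha$; and for every $\mu_j\notin\{0,\lambda\}$ one has $w_j=0$, whence $\abs{\sigma(U_i)}^2=0$ for each $i$ with $\lambda_i=\mu_j$ by non-negativity, which is~(b).

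I expect the only real subtlety to lie in the bookkeeping of the necessity direction: one must discard the eigenvalue $0$, which is invisible to the $H$-transform; use the sign condition $\abs{\sigma(U_i)}^2\ge 0$ to pass from the block-sums $w_j$ back to the individual eigenvectors; and dispose of the degenerate cases $\mu=0$ or $c=0$ (where all cumulants of $T_n$ vanish, so $A$ is a zero-sum matrix and hence automatically has $0$ as an eigenvalue) by hand. The remaining point is purely cosmetic, namely aligning the labels $\lambda,\alpha$ in conditions~(a)--(b) with the rate/jump-size convention for $\nu(\lambda,\alpha)$ fixed in Section~\ref{sec:prelim}.
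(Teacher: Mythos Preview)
Your argument is correct. Part~(1) and the sufficiency direction of part~(2) are essentially identical to what the paper does: both rely on $K_r(T_n)=\Tr(\J A^r)=\sum_i\abs{\sigma(U_i)}^2\lambda_i^r$ and then either sum the geometric series or discard the terms with $\lambda_i=0$ or $\sigma(U_i)=0$.

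The genuine difference is in the necessity direction of part~(2). You argue via uniqueness of partial-fraction decompositions (or equivalently Vandermonde invertibility on the distinct nonzero $\mu_j$): since the distinct nonzero eigenvalues give rise to poles of $H_{T_n}$ at pairwise distinct locations $1/\mu_j$, matching with the single pole of the Boolean--Poisson $H$-transform forces all but one of the weights $w_j$ to vanish, and non-negativity pushes this down to the individual $\abs{\sigma(U_i)}^2$. The paper instead works directly with the cumulant identity $\sum_i\abs{\sigma(U_i)}^2(\lambda_i/\lambda)^k=\alpha$ and takes $k\to\infty$ along even integers: the dominant term isolates the eigenvalues of maximal modulus, which forces first $\sigma(U_i)=0$ whenever $\abs{\lambda_i}>\lambda$, then (after reducing to $\abs{\lambda_i}\le\lambda$) that $\Lambda=\{i:\lambda_i=\lambda\}$ is nonempty with $\sum_{i\in\Lambda}\abs{\sigma(U_i)}^2=\alpha$, and finally that the remaining nonzero eigenvalues have $\sigma(U_i)=0$. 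Your route is shorter and uses the analytic structure of $H$ more directly; the paper's route is more elementary in that it never leaves the moment/cumulant level and avoids any discussion of poles. Both handle the same degenerate cases you flag (eigenvalue $0$ invisible to $H$, and the trivial all-cumulants-zero situation).
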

\begin{proof}(1)
From  the proof of Lemma \ref{lemma:equvalenttwoforall}, we obtain 
$$H_{T_n}(z)=\sum_{k=1}\Tr(\J A^{k})z^k=\sum_{k=1}({\abs {\sigma(U_1)}}^2 \lambda_1^k+\dots+ \abs {\sigma(U_n)}^2\lambda_n^k)z^k=\sum_{j=1}^n \frac{\abs{\sigma(U_i)}^2\lambda_i z}{1-\lambda_iz}.$$
(2) Again by Lemma  \ref{lemma:equvalenttwoforall}, we have 
 \begin{align}
\Tr(\J A^{k})&=\abs {\sigma(U_1)}^2 \lambda_1^k+\dots+ \abs {\sigma(U_n)}^2 \lambda_n^k=\alpha\lambda^k \nonumber
\intertext{or equivalently}
\Tr(\J (A/\lambda)^{k})&=\abs {\sigma(U_1)}^2 (\lambda_1/\lambda)^k+\dots+ \abs {\sigma(U_n)}^2 (\lambda_n/\lambda)^k=\alpha. \label{rownoscmomentow}
\end{align}
Therefore, if $A$ satisfies    (a) and (b),   then $T_n$
has  the Boolean Poisson   distribution $\nu(\lambda,\alpha)$. 
To   verify the sufficient condition, first we compute the limit of $$\lim_{k\to \infty}\Tr(\J (A/\lambda)^{2k})=\begin{cases}
0& \text{ if }\lambda_i<\lambda \text{ for all $i\in [n]$},\\
\infty& \text{ if }\lambda_i>\lambda \text{ and } \sigma(U_i)>0 \text{ at least for one $i\in [n]$},
\end{cases}$$ which is contradiction with $\alpha>0$.  This  means that if $\lambda_i> \lambda$, then $\sigma(U_i)=0$. Now, without loss of generality, we can assume that $\lambda_i\leq \lambda$.  From the last analysis we also conclude that  $\Lambda:=\{i|i\in [n] \text{ and }\lambda_i=\lambda\}$ is not empty and let $\Lambda^c:=\{i|i\in [n] \text{ and }\lambda_i<\lambda\}$. We thus can rewrite the equation \eqref{rownoscmomentow} as    
$$\sum_{i\in\Lambda^c}\abs {\sigma(U_i)}^2 (\lambda_i/\lambda)^k
=\alpha-\sum_{i\in\Lambda}\abs {\sigma(U_i)}^2.$$
The limit of the left-hand side is zero as $k\to \infty$ and  we find that $\alpha=\sum_{i\in\Lambda}\abs {\sigma(U_i)}^2$. 
 In particular, we prove that
 $$\sum_{i\in\Lambda^c}\abs {\sigma(U_i)}^2 \lambda_i^k
=0 \quad \text{ for $k$ even,}$$
which means that $\sigma(U_i)=0$, whenever   $\lambda_i\neq 0$ for $i\in\Lambda^c. $
\end{proof}
\noindent The Boolean analogue of  \cite[Theorem 2.3]{HiwatashiKurodaNagisaYoshida:1999a} can be formulated as follows.  
\begin{prop} \label{propozycjaniezaleznosc}
 Let $\X_1, \X_2,\dots, \X_n \in \A_{sa}$ be a Boolean standard normal family  and let    $A=[a_{i,j}]_{i,j=1}^n,B=[b_{i,j}]_{i,j=1}^n \in M_n^{sa}(\C)$. 
Then  quadratic forms  $\Q_1=\sum_{i=1}^na_{i,j}\X_i\X_j$ and $\Q_2=\sum_{i=1}^n b_{i,j}\X_i\X_j$  are Boolean independent iff 
\begin{enumerate}
   \item $BA^k$  and  $AB^k$  are zero sums matrices  for all  $k\in \N$ if $n\geq 3$;
    \item $BA$ and  $AB$  are zero sums matrices  if   $n=2$,
\end{enumerate}
\label{twr:HiwatashiKurodaNagisaYoshida} 
\end{prop}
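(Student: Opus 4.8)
The plan is to characterize Boolean independence of $\Q_1$ and $\Q_2$ via vanishing of mixed Boolean cumulants, and then to translate each such mixed-cumulant condition into a statement about traces of the form $\Tr(\J \cdots)$ using the multivariate version of Corollary~\ref{eq:semisirgular}. The key point is that $\Q_1,\Q_2$ are Boolean independent if and only if every mixed Boolean cumulant $K_m(Q_{\epsilon_1},Q_{\epsilon_2},\dots,Q_{\epsilon_m})=0$ whenever the string $\epsilon_1\epsilon_2\cdots\epsilon_m\in\{1,2\}^m$ is \emph{not} constant. Since each $\Q_j$ is a quadratic form in the common standard normal family, Corollary~\ref{eq:semisirgular} gives
$$
K_m(Q_{\epsilon_1},\dots,Q_{\epsilon_m})=c^2\,\Tr(\J\,C_{\epsilon_1}C_{\epsilon_2}\cdots C_{\epsilon_m}),
\qquad C_1=A,\ C_2=B,
$$
so Boolean independence is equivalent to $\Tr(\J\, C_{\epsilon_1}\cdots C_{\epsilon_m})=0$ for every non-constant word in $A,B$.

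First I would reduce the family of words one has to test. Because $\Tr(\J\,M)=\mathbf 1 M\mathbf 1^\ast$ (notation as in the proof of Lemma~\ref{lemma:equvalenttwoforall}), we have $\Tr(\J\, M_1\cdots M_p)=\sigma(\mathbf 1 M_1\cdots M_{p-1})\cdot\overline{\sigma(\mathbf 1 \overline{M_p}^{\,\ast}\cdots)}$-type bilinear expressions; the cleanest route is to note that $\Tr(\J\,M)=0$ for all relevant $M$ is governed by the row/column sum vectors of the constituent matrices. I would first observe that the conditions "$BA^k$ is a zero sum matrix for all $k$" and "$AB^k$ is a zero sum matrix for all $k$" are, by Lemma~\ref{lemma:equvalenttwoforall} applied appropriately, equivalent to $\Tr(\J\,BA^k B\cdots)=0$-type relations and, more directly, to $A^k B$ having zero row sums, i.e. $A^k B\mathbf 1^\ast=0$ for all $k$, equivalently (Cayley–Hamilton) $B\mathbf 1^\ast$ lies in the kernel of $A$ restricted to the cyclic subspace it generates — which forces $AB\mathbf1^\ast = 0$ too, and symmetrically $BA\mathbf1^\ast=0$. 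The heart of the argument is then a bookkeeping lemma: every non-constant word $C_{\epsilon_1}\cdots C_{\epsilon_m}$ contains an adjacent pair $\cdots C_i C_j\cdots$ with $i\ne j$, hence a factor $AB$ or $BA$; pushing the corresponding vector $\mathbf 1^\ast$ through from the right, the first time the letter changes we hit $A B\mathbf1^\ast$ or $BA\mathbf1^\ast$, provided no powers intervene — and the conditions in (1) are exactly what is needed to kill the words where powers \emph{do} intervene before the first switch.

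Then I would handle the converse: assuming all the mixed traces vanish, I want to recover (1) (resp.\ (2)). Testing the specific words $\J\,\underbrace{A\cdots A}_{k}B$, i.e. computing $K_{k+1}(Q_1,\dots,Q_1,Q_2)=c^2\Tr(\J A^k B)$, shows $\Tr(\J A^k B)=0$ for all $k\ge 1$; but $\Tr(\J A^k B)=\mathbf 1 A^k B\mathbf 1^\ast=\sum_i (A^k)_{\text{row-sum at }i}\,(B\mathbf 1^\ast)_i$ — more precisely, writing $v=B\mathbf 1^\ast$ and using $\mathbf 1 A^k=(A^k\mathbf 1^\ast)^\ast$ by self-adjointness of $A$, this equals $\langle A^k\mathbf1^\ast, v\rangle$. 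Vanishing for all $k\ge1$, combined with a diagonalization of $A$ as in Lemma~\ref{lemma:equvalenttwoforall}, yields that the component of $v$ along each eigenvector $U_i$ with $\lambda_i\ne 0$ and $\sigma(U_i)\ne 0$ is zero; a short linear-algebra step upgrades this to $A^k v\mathbf1^\ast$-type zero-row-sum statements, i.e. $BA^k$ (which has row sums $(BA^k)\mathbf1^\ast = B(A^k\mathbf1^\ast)$, and one checks column sums via $\mathbf1 BA^k$) is a zero sum matrix. The role of the dimension hypothesis $n\ge 3$ versus $n=2$ enters here: for $n=2$ the only available matrices $A,B$ are $2\times2$ self-adjoint, Cayley–Hamilton ($A^2=(\tr A)A-(\det A)I$) collapses the infinite list of conditions "for all $k$" to just $k=1$, so only $BA,AB$ zero sum is needed and, conversely, that is all one can extract.

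\textbf{The main obstacle} I expect is the combinatorial lemma at the core of the forward direction: showing that the finitely many explicit conditions "$BA^k, AB^k$ zero sum for all $k\in\N$" (which by Cayley–Hamilton is really finitely many) suffice to force \emph{all} mixed traces $\Tr(\J\,C_{\epsilon_1}\cdots C_{\epsilon_m})$, over arbitrarily long non-constant words, to vanish. The clean way is to set $u=A\mathbf1^\ast$, $v=B\mathbf1^\ast$ and analyze the subspace $\mathcal V=\operatorname{span}\{A^a B A^b\cdots \mathbf1^\ast\}$ generated by pushing $\mathbf 1^\ast$ through words from the right; the hypotheses say $AB\mathbf1^\ast=BA\mathbf1^\ast=0$ and more, which should propagate to show that after the first letter-switch every such vector is killed, so the only surviving words are the constant ones $\J A^m$ and $\J B^m$ — exactly the Boolean cumulants of $\Q_1$ alone and of $\Q_2$ alone — and hence the mixed cumulants all vanish. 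Verifying that the propagation actually goes through in both the $\cdots AB\cdots$ and $\cdots BA\cdots$ cases, and carefully matching it against precisely the stated conditions (1), is where the real work lies; everything else is the spectral bookkeeping already deployed in Lemma~\ref{lemma:equvalenttwoforall}.
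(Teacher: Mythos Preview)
Your overall setup is the same as the paper's: the joint Boolean cumulant formula
\[
K_{i_1+\dots+j_k}(\underbrace{Q_1,\dots}_{i_1},\underbrace{Q_2,\dots}_{j_1},\dots)=\Tr(\J A^{i_1}B^{j_1}\cdots A^{i_k}B^{j_k})
\]
reduces Boolean independence to the vanishing of $\Tr(\J W)$ for every nonconstant word $W$ in $A,B$. Your argument that the zero-sum hypotheses kill all mixed words (push $\mathbf 1$ through from one side until the first letter-switch) is exactly the paper's converse for part (1).

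There is, however, a genuine gap in the direction ``independence $\Rightarrow$ matrix conditions''. Testing only the words $A^kB$ gives you the \emph{scalar} conditions $\mathbf 1^\ast A^kB\mathbf 1=\langle A^k\mathbf 1,\,B\mathbf 1\rangle=0$, and your proposed spectral ``short linear-algebra step'' cannot upgrade these to the \emph{vector} equations $BA^k\mathbf 1=0$: knowing that $B\mathbf 1$ is orthogonal to the cyclic $A$-orbit of $\mathbf 1$ says nothing about $B(A^k\mathbf 1)$ itself. The paper instead tests palindromic words and uses positivity: from the mixed cumulant associated with $A^kB^2A^k$ one gets $\Tr(\J A^kB^2A^k)=\|BA^k\mathbf 1\|^2=0$, hence $BA^k\mathbf 1=0$ directly. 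This is the missing idea.

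Your treatment of $n=2$ via Cayley--Hamilton also does not go through: from $A^2=(\tr A)A-(\det A)I$ one gets $BA^2\mathbf 1=(\tr A)\,BA\mathbf 1-(\det A)\,B\mathbf 1$, and the second term need not vanish, so ``$BA$ zero sum'' does not by itself imply ``$BA^k$ zero sum for all $k$''. The paper's argument is structural: for $2\times2$ self-adjoint $A,B$, the conditions $J_2AB=J_2BA=0$ force the row \emph{and} column sums of $AB$ to vanish, and a direct computation of the diagonal of a product of two self-adjoint $2\times2$ matrices then shows $AB$ is real symmetric, hence $AB=BA$. Commutativity collapses every mixed word to $A^iB^j$ with $i,j\ge1$, and $\mathbf 1^\ast AB=0$ finishes.
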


\begin{Rem}
\begin{enumerate}[1.]
    \item Proposition \ref{propozycjaniezaleznosc} implies that there is a gap in \cite[Proposition 4.6]{Lehner:2003}. It is a consequence that  the author does not assume the fact that identity operator is not Boolean independent with the sample $\X_1, \X_2,\dots, \X_n$. 
    \item The following example show that we cannot formulate the part (1) of Proposition \ref{propozycjaniezaleznosc}, without parameter $k$. We can define the matrices $A$ and $B$ for $n \geq 3$ which satisfy $J_nAB=0$, $J_nBA=0$ and $J_nA^kB \neq 0$, $J_nB^kA \neq 0$ for some $k \in \N$.
    
    \noindent
    Computer calculations show that it is possible to find many different such matrices. For example, for $n=3$
    $$A=\left[\begin{smallmatrix}
    -15 & 6 & 1 \\
    6& 9 & -8  \\
    1 & -8 & 5
    \end{smallmatrix}\right]\text{ and } B=\left[\begin{smallmatrix}
    -1 & 16 & 60 \\
    16 & 44 & 90  \\
    60 & 90 & 75
    \end{smallmatrix}\right]. $$ 
Then $J_3AB=0$, $J_3BA=0$ and $J_3A^2B \neq 0$, $J_3B^2A \neq 0$.

\noindent
For $n=4$ we find matrices
    $$A=\left[\begin{smallmatrix}
    1 & 2 & 3 & 4 \\
    2 & -5 & 7 & 16  \\
    3 & 7 & 10 & 10 \\
    4 & 16 & 10 & 10
    \end{smallmatrix}\right]\text{ and } B=\left[\begin{smallmatrix}
    81 & -9 & -9 & -9\\
    -9 & 1 & 1 & 1  \\
    -9 & 1 & 1 & 1 \\
    -9 & 1 & 1 & 1
    \end{smallmatrix}\right]. $$
Then $J_4AB=0$ and $J_4BA=0$. Here $J_4A^{11}B \neq 0$ and $J_4B^{12}A \neq 0$.

\noindent
It is possible to find such matrices $A$ and $B$ for $n \geq 5$. You can create the system of equations using $J_nAB=0$ and $J_nBA=0$ and then find the set of solutions satisfying $J_nA^kB \neq 0$ or $J_nB^kA \neq0$ for some $k \geq 2$.

\end{enumerate}
\end{Rem}

\begin{proof}
(1). 
We can write the joint cumulants of $Q_1$, $Q_2$ as
$$K_{i_1+j_1\dots+i_k+j_k}(\underbrace{Q_1,\dots,Q_1}_{i_1},\underbrace{Q_2,\dots,Q_2}_{j_1},\dots, \underbrace{Q_1,\dots,Q_1}_{i_k},\underbrace{Q_2,\dots,Q_2}_{j_k})=\Tr (\J A^{i_1}B^{j_1}\dots A^{i_k}B^{j_k}),$$
where  $ i_1,j_1,\dots,i_k,j_k  \in \N_0.$ Assume that $Q_1$ and $Q_2$
are Boolean independent, then mixed cumulants vanish and  
 in particular we have  $\Tr (\J AB^kA)=\frac{1}{n}\Tr (\J AB^kA\J)=0$. Then by
the Schwarz inequality for nonnegative self-adjoint operators we have $\J A^kB=0$ (by symmetry we also get $\J B^kA=0$), i.e., $BA^k$ is zero sum matrix. 
For the inverse, let us observe that $BA^k$ and $AB^k$  are zero sums matrices if and only if $\J A^kB=0$ and $\J B^kA=0$, which imply that joint cumulants of $Q_1$ and $Q_2$ disappear.  
\newline
(2). If all mixed cumulants disappear then from part (1) we see that $BA$ and  $AB$  are zero sums matrices. For the converse, let us observe that  if $J_2 AB=0$ and $J_2 BA=0$, then $AB=\left[\begin{smallmatrix}
    a+ib & -a-ib\\
    -a-ib& a +ib\\
    \end{smallmatrix}\right]$ for some $a,b\in \R$, because sums of rows and columns must be zero. Let us observe that  multiplication of two self-adjoint matrices $A$ and 
 $B$ has the structure $\left[\begin{smallmatrix}
    x-iz& \cdot\\
    \cdot & y +iz\\
    \end{smallmatrix}\right]$ for some $x,y,z \in \R$. From this we conclude that  $AB=BA=\left[\begin{smallmatrix}
    a & -a\\
    -a& a\\
    \end{smallmatrix}\right]$, namely the matrices $A$ and $B$ commute. In this case, we can calculate the
    joint cumulants of $Q_1$, $Q_2$ as
\begin{align*}K_{i_1+j_1\dots+i_k+j_k}&(\underbrace{Q_1,\dots,Q_1}_{i_1},\underbrace{Q_2,\dots,Q_2}_{j_1},\dots, \underbrace{Q_1,\dots,Q_1}_{i_k},\underbrace{Q_2,\dots,Q_2}_{j_k})\\&=\Tr (J_2 AB A^{i_1+\dots+i_k-1}B^{j_1+\dots+j_k-1})=0.
\end{align*}
 \end{proof}
 \noindent
 Kagan and Shalaevski \cite{KaganShalaevski} have shown that if the random variables  $\X_1,\dots,\X_n$ are i.i.d.  and the  distribution of $\sum_{i=1}^n(\X_i+a_i)^2$,   $a_i\in \mathbb{R}$ depends only on $\sum_{i=1}^na_i^2$ , then each $\X_i\sim N(0, \sigma)$. 
In the next theorem, we will show that  the Boolean version of this theorem is true. Since $(\X_i+a_i)^2$, $i\in\{1,\dots,n\}$ are not Boolean  independent (by Proposition \ref{niezaleznoscPopa}), we cannot use the standard arguments as in  \cite[Theorem 3.2]{HiwatashiKurodaNagisaYoshida:1999a}
or in \cite{Ejsmont2016}.

\begin{theo}
 Let $\X_1, \X_2,\dots, \X_n$ be Boolean  independent   identically distributed copies of a
random variable $X$ with mean $0$ and variance $1$. Then the sums $P=\sum_{i=1}^n(\X_i+a_i)^2$ have the law that
depends on $(a_1,\dots, a_n)$ through $\sum_{i=1}^na_i^2$  only  for every $a_i \in \mathbb{R}$ if and only if $X$ is Boolean Gaussian  random variable.
\end{theo}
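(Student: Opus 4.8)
The plan is to pass to Boolean cumulants. The law of $P$ is encoded by $H_P(z)=\sum_{r\ge1}K_r(P)z^r$, so it depends on $(a_1,\dots,a_n)$ only through $u:=\sum_{j=1}^na_j^2$ precisely when every $K_r(P)$ is a function of $u$ alone. The crucial first step is to compute $K_r(P)$. I would write $P=\sum_{i=1}^nP_i$ with $P_i=(X_i+a_i)^2=Q_i+a_i^2\cdot1$, where $Q_i:=X_i^2+2a_iX_i$ lies in the non-unital algebra generated by $X_i$, so the $Q_i$ are still Boolean independent. Expanding $K_r(P)=\sum_{\underline i\in[n]^r}K_r(P_{i_1},\dots,P_{i_r})$ and then expanding multilinearly in the scalar parts $a_{i_k}^2$, each term in which the first or last argument is the scalar $1$ vanishes by Proposition~\ref{niezaleznoscPopa}, the interior scalars are deleted by the same proposition, and the surviving cumulants $K_s(Q_{i_{k_1}},\dots,Q_{i_{k_s}})$ vanish by vanishing of mixed cumulants unless all indices coincide, say to $j$, in which case the value is the univariate Boolean cumulant $\kappa_s(a_j)$, where $\kappa_s(a):=K_s(X^2+2aX)$. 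Summing the remaining free indices over $[n]$ contributes a factor $u$ each; this is the same bookkeeping as in the proof of Proposition~\ref{prop:CykliczneVariancja}, and I expect it to yield
\[
K_r(P)=\sum_{s=2}^{r}\binom{r-2}{s-2}\,u^{\,r-s}\,f_s(\underline a),\qquad
f_s(\underline a):=\sum_{j=1}^n\kappa_s(a_j).
\]

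Since the coefficient of $f_r$ in this identity is $1$ and $K_1(P)=n+u$ is automatically a function of $u$, an upper-triangular induction on $r$ reduces the theorem to: \emph{the law of $P$ depends only on $u$ if and only if $f_s(\underline a)=\sum_{j=1}^n\kappa_s(a_j)$ depends only on $u$ for every $s\ge2$.} For the implication $(\Leftarrow)$, if $X$ is Boolean Gaussian with mean $0$ and variance $1$ then $X$ has distribution $\frac12\delta_{-1}+\frac12\delta_1$, so $X^2=1$ and $X^2+2aX=1+2aX$; a direct computation gives $H_{1+2aX}(z)=z+4a^2z^2/(1-z)$, hence $\kappa_s(a)=4a^2$ for all $s\ge2$, so $f_s(\underline a)=4u$ and $K_r(P)=4u(1+u)^{r-2}$, i.e.\ $H_P(z)=(n+u)z+4uz^2/(1-(1+u)z)$, which depends on $(a_1,\dots,a_n)$ only through $u$.

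For the implication $(\Rightarrow)$, assume the law of $P$ depends only on $u$, so each $f_s(\underline a)=\sum_{j=1}^n\kappa_s(a_j)$ depends only on $\sum_ja_j^2$. I would then use the elementary fact that if $g$ is a one-variable polynomial and $\sum_{j=1}^ng(a_j)$ is a function of $\sum_{j=1}^na_j^2$ with $n\ge2$, then $g(a)=g(0)+ca^2$ for some constant $c$: setting $a_3=\dots=a_n=0$, the odd part of $g$ must vanish identically, and the even part, viewed as a polynomial in $a^2$, satisfies Cauchy's additive equation on $[0,\infty)$ and is therefore affine. Applied with $g=\kappa_s$ this forces $\deg\kappa_s\le2$; but the coefficient of $a^s$ in $\kappa_s(a)=K_s(X^2+2aX)$ is $2^sK_s(X)$ — the term of the multilinear expansion in which every slot contributes the letter $2aX$ — so $K_s(X)=0$ for every $s\ge3$, i.e.\ $X$ is Boolean Gaussian.

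The step I expect to be the actual work is establishing the closed formula for $K_r(P)$: although it is morally the computation behind Proposition~\ref{prop:CykliczneVariancja}, one must interleave the cumulants-of-products expansion with the scalar rules of Proposition~\ref{niezaleznoscPopa} — exactly where the naive classical or free argument breaks down, since the $(X_i+a_i)^2$ are not Boolean independent — and carry the $a_i$-dependence through the multi-index sum. One should also assume $n\ge2$: for $n=1$ the equivalence in the theorem fails, the hypothesis then only forcing the odd cumulants of $X$ to vanish.
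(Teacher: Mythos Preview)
Your argument is correct and takes a genuinely different, cleaner route than the paper.

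The paper proves $(\Leftarrow)$ by applying the product formula of Theorem~\ref{thm:krawczyk} and the block structure of Corollary~\ref{wniosekdolematu} directly to $K_r(P)$, obtaining a sum over interval partitions $\pi$ with $\pi\vee\onetwo{r}=\hat 1_{2r}$, each weighted by a power of $\sum_ia_i^2$. For $(\Rightarrow)$ it specializes to $Y(a)=(X_1+a)^2+X_2^2+\dots+X_n^2$ and $\tilde Y(a,b)=(X_1+a)^2+(X_2+b)^2+X_3^2+\dots$, extracts by hand the coefficients of $K_{2r-1}(X)$ and $K_{2r-4}(X)$ in $K_r(Y(a))$, $K_r(\tilde Y(a,b))$ via a case analysis of partition shapes, and kills the higher cumulants by comparing values at $(a,0)$ and $(a/\sqrt2,a/\sqrt2)$. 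You instead separate $P_i=Q_i+a_i^2\cdot 1$, use Proposition~\ref{niezaleznoscPopa} to strip the scalars, and reduce everything to the univariate quantities $\kappa_s(a)=K_s(X^2+2aX)$; this yields the closed binomial formula $K_r(P)=\sum_{s=2}^r\binom{r-2}{s-2}u^{r-s}f_s$ and turns the characterization into the elementary polynomial fact that $\sum_j g(a_j)$ being a function of $\sum_ja_j^2$ forces $g(a)=g(0)+ca^2$. What you gain is that no partition combinatorics or case analysis is needed, and the necessity direction is reduced to a one-line Cauchy argument; what the paper's method offers is that it stays within the interval-partition machinery used throughout the article and produces the explicit partition-indexed expression $K_r(P)=\sum_\pi(\sum_ia_i^2)^{r+\#\Sing(\pi)-\#\pi}$ in the Gaussian case. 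Your remark that $n\ge2$ is needed is also correct and equally implicit in the paper's proof.
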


\begin{proof}
Suppose that $X_i$ have the Boolean   normal distribution with mean zero and variance one, then by Proposition \ref{niezaleznoscPopa} we have
             \begin{align}
K_r(\X_{i_1}+a_{i_1},\X_{i_2}+a_{i_2},\dots,\X_{i_r}+a_{i_r}) &=
\begin{cases}
0& \text{ for }i_1\neq i_r,\\
a_{i_2}\dots a_{i_{r-1}} K_2(\X_{i_1},\X_{i_1})& \text{ for } i_1= i_r,
\end{cases}\nonumber
\\&= 
\begin{cases}
0& \text{ for }i_1\neq i_r,\\
a_{i_2}\dots a_{i_{r-1}} & \text{ for } i_1= i_r.
\end{cases} \label{cumulantyfacznadlagausowskich}
\end{align}
First we apply the decomposition from Corollary \ref{wniosekdolematu} and obtain 
 \begin{align*}
    K_r(P) &=
               \sum_{i_1,\dots,i_{r}}\sum_{\substack{\pi\in \NC(2r)\\ \pi\vee \onetwo{r}=\hat{1}_{2r}}}
               K_\pi(\X_{i_1}+a_{i_1},\X_{i_1}+a_{i_1},                
                    \dots,        \X_{i_{r}}+a_{i_{r}},\X_{i_{r}}+a_{i_{r}})
                    \\
&=\sum_{i_1,\dots,i_{r}}\sum_{\substack{\pi=}}
K_\pi(\X_{i_1}+a_{i_1},\X_{i_1}+a_{i_1},                
                    \dots,        \X_{i_{r}}+a_{i_{r}},\X_{i_{r}}+a_{i_{r}})
                    \\
&+\sum_{i_1,\dots,i_{r}}\sum_{\substack{\pi\in \NC(2r) \\ \pi=\begin{picture}(32,6.5)(-8,0) 
 

 
   \put(0,0){\line(0,1){4.5}}
  \put(6,0){\line(0,1){4.5}}
  \put(12,0){\line(0,1){4.5}}
  \put(18,0){\line(0,1){4.5}}
  \put(36,0){\line(0,1){4.5}}
  \put(0,4.5){\line(1,0){36}}

 \put(45,-2){$\cdots$}

\put(116,-2){$\cdots$}
 
   \put(108,0){\line(0,1){4.5}}
  \put(90,0){\line(0,1){4.5}}
  \put(84,0){\line(0,1){4.5}}
  \put(66,0){\line(0,1){4.5}}
  \put(66,4.5){\line(1,0){42}}

 
   \put(174,0){\line(0,1){4.5}}
  \put(168,0){\line(0,1){4.5}}
  \put(162,0){\line(0,1){4.5}}
  \put(156,0){\line(0,1){4.5}}
  \put(138,0){\line(0,1){4.5}}
  \put(138,4.5){\line(1,0){36}}
\end{picture}}}
K_\pi(\X_{i_1}+a_{i_1},\X_{i_1}+a_{i_1},                
                    \dots,        \X_{i_{r}}+a_{i_{r}},\X_{i_{r}}+a_{i_{r}})
\intertext{using equation \eqref {cumulantyfacznadlagausowskich}, we  eliminate the zero contribution indexes and  obtain}                  
&=\sum_{i_1,\dots,i_{r-1}}\sum_{\substack{\pi=}}
K_\pi(\X_{i_1},\X_{i_1}+a_{i_1},                
                    \dots,        \X_{i_{1}}+a_{i_{1}},\X_{i_{1}})
                    \\
&+\sum_{i_1,\dots,i_{r}}\sum_{\substack{\pi\in \NC(2r) 
\\ \ker\underline{i}\geq \begin{picture}(32,12)(-4,0) 
 

 
   \put(0,0){\line(0,1){9}}
  \put(6,0){\line(0,1){9}}
  \put(12,0){\line(0,1){4.5}}
  \put(18,0){\line(0,1){4.5}}
  \put(36,0){\line(0,1){9}}
  \put(0,9){\line(1,0){174}} 
  \put(12,4.5){\line(1,0){6}} 
 
 \put(45,-2){$\cdots$}

\put(116,-2){$\cdots$}
 
   \put(108,0){\line(0,1){9}}
  \put(90,0){\line(0,1){4.5}}
  \put(84,0){\line(0,1){4.5}}
  \put(66,0){\line(0,1){9}}
  \put(84,4.5){\line(1,0){6}}

 
   \put(174,0){\line(0,1){9}}
  \put(168,0){\line(0,1){9}}
  \put(162,0){\line(0,1){4.5}}
  \put(156,0){\line(0,1){4.5}}
  \put(138,0){\line(0,1){9}}
  \put(156,4.5){\line(1,0){6}}
\end{picture}
\\ \begin{picture}(32,6.5)(-16,0) 
 

 
   \put(0,0){\line(0,1){4.5}}
  \put(6,0){\line(0,1){4.5}}
  \put(12,0){\line(0,1){4.5}}
  \put(18,0){\line(0,1){4.5}}
  \put(36,0){\line(0,1){4.5}}
  \put(0,4.5){\line(1,0){36}}

 \put(45,-2){$\cdots$}

\put(116,-2){$\cdots$}
 
   \put(108,0){\line(0,1){4.5}}
  \put(90,0){\line(0,1){4.5}}
  \put(84,0){\line(0,1){4.5}}
  \put(66,0){\line(0,1){4.5}}
  \put(66,4.5){\line(1,0){42}}

 
   \put(174,0){\line(0,1){4.5}}
  \put(168,0){\line(0,1){4.5}}
  \put(162,0){\line(0,1){4.5}}
  \put(156,0){\line(0,1){4.5}}
  \put(138,0){\line(0,1){4.5}}
  \put(138,4.5){\line(1,0){36}}
\end{picture}}}
K_\pi(\X_{i_1}+a_{i_1},\X_{i_1}+a_{i_1},                
                    \dots,        \X_{i_{r}}+a_{i_{r}},\X_{i_{r}}+a_{i_{r}}).
                     \end{align*}
The first sums correspond to $\sum_{i_1,\dots,i_{r-1}}a_{i_1}^2\dots a_{i_{r-1}}^2=\big(\sum_{i=1}a_{i}^2\big)^{r-1}$.   The second summation can be decomposed according to the number of singletons. Indeed,  the outer block (which connects two odd blocks)    has the contribution  $\sum_{i}a_{i}^2$ and the remaining indexes run over all independent set of   pairs. It is not difficult to see that the number of these pairs depends on the number of  singletons $r-1+\#\Sing(\pi)-\#\pi$ and the corresponding contribution can be described as below.
 \begin{center}
\begin{tabular}{c c c c  c}
   & $\ker\underline{i}\geq$  &\begin{picture}(178,12)(-4,0) 
 

 
   \put(0,0){\line(0,1){9}}
  \put(6,0){\line(0,1){9}}
  \put(12,0){\line(0,1){4.5}}
  \put(18,0){\line(0,1){4.5}}
  \put(36,0){\line(0,1){9}}
  \put(0,9){\line(1,0){174}} 
  \put(12,4.5){\line(1,0){6}} 
 
 \put(45,-2){$\cdots$}

\put(116,-2){$\cdots$}
 
   \put(108,0){\line(0,1){9}}
  \put(90,0){\line(0,1){4.5}}
  \put(84,0){\line(0,1){4.5}}
  \put(66,0){\line(0,1){9}}
  \put(84,4.5){\line(1,0){6}}

 
   \put(174,0){\line(0,1){9}}
  \put(168,0){\line(0,1){9}}
  \put(162,0){\line(0,1){4.5}}
  \put(156,0){\line(0,1){4.5}}
  \put(138,0){\line(0,1){9}}
  \put(156,4.5){\line(1,0){6}}
\end{picture} & $\leadsto$ & $\big(\sum_{i=1}a_{i}^2\big)^{r+\#\Sing(\pi)-\#\pi}$
\\Outer & $\ker\underline{i}\geq$ &\begin{picture}(178,12)(-4,0) 
 

 
   \put(0,0){\line(0,1){9}}
  \put(6,0){\line(0,1){9}}
  
  \put(36,0){\line(0,1){9}}
  \put(0,9){\line(1,0){174}} 
 
 \put(45,-2){$\cdots$}

\put(116,-2){$\cdots$}
 
   \put(108,0){\line(0,1){9}}
  \put(66,0){\line(0,1){9}}

 
   \put(174,0){\line(0,1){9}}
  \put(168,0){\line(0,1){9}}

  \put(138,0){\line(0,1){9}}
\end{picture} & $\leadsto$ & $\sum_{i}a_{i}^2$ 
\\ Reminder pairs & 
$\ker\underline{i}\geq$ &\begin{picture}(178,12)(-4,0) 
 
 
  \put(12,0){\line(0,1){4.5}}
  \put(18,0){\line(0,1){4.5}}

  \put(12,4.5){\line(1,0){6}} 
 
 \put(45,-2){$\cdots$}

\put(116,-2){$\cdots$}

  \put(90,0){\line(0,1){4.5}}
  \put(84,0){\line(0,1){4.5}}

  \put(84,4.5){\line(1,0){6}}


  \put(162,0){\line(0,1){4.5}}
  \put(156,0){\line(0,1){4.5}}

  \put(156,4.5){\line(1,0){6}}
\end{picture} & $\leadsto$   & $\big(\sum_{i=1}a_{i}^2\big)^{r-1+\#\Sing(\pi)-\#\pi}$
\end{tabular}
\end{center}
 More precisely, it can be written as the sums
 \begin{align*}
&=\big(\sum_{i=1}a_{i}^2\big)^{r-1}                  \\
&+\sum_{\substack{\pi\in \NC(2r) \\ \pi\vee \onetwo{r}=\hat{1}_{2r}
\\ 
\Sing(\pi)=\{(1),(2r)\}}}\sum_{i_1,\dots,i_{r}}
K_\pi(\X_{i_1}+a_{i_1},\X_{i_1}+a_{i_1},                
                    \dots,        \X_{i_{r}}+a_{i_{r}},\X_{i_{r}}+a_{i_{r}})
                                        \\
&+\sum_{\substack{\pi\in \NC(2r) \\ \pi\vee \onetwo{r}=\hat{1}_{2r}
\\ 
\Sing(\pi)=\{(1)\} \vee \Sing(\pi)=\{(2r)\}}}\sum_{i_1,\dots,i_{r}}
K_\pi(\X_{i_1}+a_{i_1},\X_{i_1}+a_{i_1},                
                    \dots,        \X_{i_{r}}+a_{i_{r}},\X_{i_{r}}+a_{i_{r}})
                                          \\
&+\sum_{\substack{\pi\in \NC(2r) \\ \pi\vee \onetwo{r}=\hat{1}_{2r}
\\ 
\Sing(\pi)=\emptyset\\
\pi\neq}}\sum_{i_1,\dots,i_{r}}
K_\pi(\X_{i_1}+a_{i_1},\X_{i_1}+a_{i_1},                
                    \dots,        \X_{i_{r}}+a_{i_{r}},\X_{i_{r}}+a_{i_{r}}) 
                    \end{align*}
Finally, if we take into account the corresponding contribution of singletons, this  can be represented in a more compact form:
                    \begin{align*}          &=\big(\sum_{i=1}a_{i}^2\big)^{r-1}             \\&+\sum_{\substack{\pi\in \NC(2r) \\ \pi\vee \onetwo{r}=\hat{1}_{2r}
\\ 
\Sing(\pi)=\{(1),(2r)\}}}
\big(\sum_{i=1}a_{i}^2\big)^{r+2-\#\pi}
                                        \\
&+\sum_{\substack{\pi\in \NC(2r) \\ \pi\vee \onetwo{r}=\hat{1}_{2r}
\\ 
\Sing(\pi)=\{(1)\} \vee \Sing(\pi)=\{(2r)\}}}
\big(\sum_{i=1}a_{i}^2\big)^{r+1-\#\pi}
                                        \\& +\sum_{\substack{\pi\in \NC(2r) \\ \pi\vee \onetwo{r}=\hat{1}_{2r}
\\ 
\Sing(\pi)=\emptyset\\
\pi\neq}}\big(\sum_{i=1}a_{i}^2\big)^{r-\#\pi}
\\&=\sum_{\substack{\pi\in \NC(2r) \\ \pi\vee \onetwo{r}=\hat{1}_{2r}
}}
\big(\sum_{i=1}a_{i}^2\big)^{r+\#\Sing(\pi)-\#\pi}.
  \end{align*}
  Therefore, the distribution of $P$ depends only on $\sum_{i=1}a_{i}^2$. 
  To prove the reverse implication, we consider the following random
variable
$$Y(a)=(\X_1+a)^2+\X_2^2+\dots\X_n^2.$$
 Let us observe that   by Proposition \ref{niezaleznoscPopa} we have
             \begin{align}
K_m(\X+a) =
\sum_{i=0}^{m-2}{{m-2}\choose{i}}a^iK_{m-i}(X).
\label{rownaniexplusa}
 \end{align}
Direct calculations show that $$K_2(Y(a))=nK_{4}(X)+4aK_{3}(X)+4a^2.$$  By the hypothesis, the right hand side of $K_4(Y(a))$ does not depend on $a^2$, and thus we obtain $K_3(X)=0$.
Next we evaluate the $r$-th cumulants of $Y(a)$, i.e, 
\begin{align*}
    K_r(Y(a)) &=
            \sum_{\substack{\pi\in \NC(2r)\\ \pi\vee \onetwo{r}=\hat{1}_{2r}}}
               K_\pi(Y(a))
                  \\
&=K_{}(X_1+a)+ K_{}(X_2)+\dots+K_{}(X_{n}) + K_{
\begin{picture}(24,6.5)(1,0)
  \put(1,0){\line(0,1){7.5}}
  \put(6,0){\line(0,1){7.5}}
  \put(8,0){$\cdots$}
  \put(24,0){\line(0,1){7.5}}
  \put(6,7.5){\line(1,0){18}}
\end{picture}
}(X_1+a) +K_{\begin{picture}(24,6.5)(1,0)
  \put(1,0){\line(0,1){7.5}}
  \put(18,0){\line(0,1){7.5}}
  \put(2,0){$\cdots$}
  \put(24,0){\line(0,1){7.5}}
  \put(1,7.5){\line(1,0){18}}
\end{picture}}(X_1+a) 
                    \\
&+\sum_{\substack{\pi\in \NC(2r) \\ 
 \pi\vee \onetwo{r}=\hat{1}_{2r}
\\ \pi\neq 
\wedge \pi \neq   
\wedge \pi \neq   
}}
K_\pi(Y(a))
                    \\
&=K_{2r}(X+a)+ (n-1)K_{2r}(X)+2K_{}(X+a) 
                    \\
&+\sum_{\substack{\pi\in \NC(2r) \\ 
 \pi\vee \onetwo{r}=\hat{1}_{2r}
\\ \pi\neq 
\wedge \pi \neq   
\wedge \pi \neq   
}}
K_\pi(Y(a)).
\intertext{This in turn, by \eqref{rownaniexplusa},  is equal to 
}
&=\sum_{i=0}^{2r-2}{{2r-2}\choose{i}}a^iK_{2r-i}(X)+ (n-1)K_{2r}(X)+2\sum_{i=0}^{2r-3}{{2r-3}\choose{i}}a^{i+1}K_{2r-i-1}(X)  
                    \\
&+\sum_{\substack{\pi\in \NC(2r) \\ 
 \pi\vee \onetwo{r}=\hat{1}_{2r}
\\ \pi\neq 
\wedge \pi \neq   
\wedge \pi \neq   
}}
K_\pi(Y(a))
                    \\
&=nK_{2r}(X)+2arK_{2r-1}(X)+\sum_{i=2}^{2r-2}{{2r-2}\choose{i}}a^iK_{2r-i}(X) +2\sum_{i=1}^{2r-3}{{2r-3}\choose{i}}a^{i+1}K_{2r-i-1}(X)  
                    \\
&+\sum_{\substack{\pi\in \NC(2r) \\ 
 \pi\vee \onetwo{r}=\hat{1}_{2r}
\\ \pi\neq 
\wedge \pi \neq   
\wedge \pi \neq   
}}
K_\pi(Y(a))
      \\
&=nK_{2r}(X)+2arK_{2r-1}(X)+p_{2r}(a),
                    \end{align*}
where $p_{2r}(a)=\sum_{i=2}^{2r}c_ia^i$  
 is the polynomial of indeterminate $a$  such that the coefficients $c_i$ are a polynomial of $K_1,\dots,K_{2r-2}$. We would like to emphasize that there is no linear term in $p_{2r}(a)$ because we know that $K_3(X)=0$. Since $K_r(Y(a))=K_r(Y(-a))$  for all
$a\in \R$, we have  $K_{i}(X)=0$ for odd $i\geq 5$.  Further we use induction over $r$ 
to prove that all even cumulants of order at least four disappear. 
Indeed, we consider the following random
variable
$$\tilde Y(a,b)=(\X_1+a)^2+(\X_2+b)^2+\X_3^2+\dots+\X_n^2.$$
First we consider the special case $K_{4}(\tilde Y(a,b))$, which we present without  explanation  how we got the coefficients, because the
details will be explained  later in the general case. We start with  $r=4$ and obtain
\begin{align*}
    K_{4}(\tilde Y(a,b))&=nK_{8}(X)+(a^2+b^2)(26+2(n-2))K_{6}(X)\\&+\Big(4(a^4+b^4)+(2+(n-2))(a^2+b^2)^2+45(a^2+b^2)\Big)K_{4}(X)\\&+      \sum_{\substack{\pi\in \NC(8) \\ \pi\vee \onetwo{r}=\hat{1}_{8}
}}
\big(\sum_{i=1}a_{i}^2\big)^{4+\#\Sing(\pi)-\#\pi}.
\end{align*}  
By assumption we have 
$K_4(\tilde Y(a,0))=K_4(\tilde Y(a/\sqrt{2},a/\sqrt{2}))$ for all $a\in \R$. 
Comparing
the coefficients of the term $a^4$, we  get $K_4(X)=0.$ 
 Let us assume that $K_4(X)=\dots=K_{2r-8}=0$.  We extract from  $K_{r}(\tilde Y(a,b))$ for $r\geq 5$ all the factors involving $K_{2r-4}(X)$. 
Now we would like to present the  partitions and corresponding coefficients  which contribute to the term  $K_{2r-4}(X)$.
\begin{center}
\begin{tabular}{ c c c }
Partition $\pi$ & $\leadsto$ & coefficient associated with $K_{2r-4}(X)$ in expansion of $K_{\pi}(X)$ \\ 
  & $\leadsto$ & $\big({{2r-2}\choose{2}}+2(r-2)\big)(a^2+b^2)+{{r-2}\choose{2}}(n-2)(a^2+b^2)^2$ \\ 
 
$+$   & $\leadsto$&  $2(2r-3)(a^2+b^2)$ \\  
\begin{picture}(42,6.5)(1,0)
  \put(1,0){\line(0,1){7.5}}
  \put(6,0){\line(0,1){7.5}}
  \put(8,0){$\cdots$}
  \put(24,0){\line(0,1){7.5}}
  \put(6,7.5){\line(1,0){18}}
    \put(30,0){\line(0,1){7.5}}
  \put(36,0){\line(0,1){7.5}}
    \put(42,0){\line(0,1){7.5}}
    \put(30,7.5){\line(1,0){12}}
\end{picture}$+$ 
\begin{picture}(42,6.5)(1,0)
  \put(0,0){\line(0,1){7.5}}
  \put(6,0){\line(0,1){7.5}}
  \put(12,0){\line(0,1){7.5}}
  \put(18,0){\line(0,1){7.5}}
  \put(20,0){$\cdots$}
  \put(18,7.5){\line(1,0){18}}
  \put(36,0){\line(0,1){7.5}}
    \put(42,0){\line(0,1){7.5}}
    \put(0,7.5){\line(1,0){12}}
\end{picture} & $\leadsto$ & $2(a^2+b^2)$ 
\\  
\begin{picture}(42,6.5)(1,0)
  \put(1,0){\line(0,1){7.5}}
  \put(6,0){\line(0,1){7.5}}
  \put(8,0){$\cdots$}
  \put(24,0){\line(0,1){7.5}}
  \put(6,7.5){\line(1,0){18}}
    \put(30,0){\line(0,1){7.5}}
  \put(36,0){\line(0,1){7.5}}
    \put(42,0){\line(0,1){7.5}}
    \put(30,7.5){\line(1,0){6}}
\end{picture}$+$ \begin{picture}(42,6.5)(1,0)
  \put(0,0){\line(0,1){7.5}}
  \put(6,0){\line(0,1){7.5}}
  \put(12,0){\line(0,1){7.5}}
  \put(18,0){\line(0,1){7.5}}
  \put(20,0){$\cdots$}
  \put(18,7.5){\line(1,0){18}}
  \put(36,0){\line(0,1){7.5}}
    \put(42,0){\line(0,1){7.5}}
    \put(6,7.5){\line(1,0){6}}
\end{picture} & $\leadsto$ & $2(a^2+b^2)$  
\\  
\begin{picture}(36,6.5)(1,0)
  \put(0,0){\line(0,1){7.5}}
  \put(6,0){\line(0,1){7.5}}
  \put(12,0){\line(0,1){7.5}}
  \put(18,0){\line(0,1){7.5}}
  \put(20,0){$\cdots$}
  \put(18,7.5){\line(1,0){18}}
  \put(36,0){\line(0,1){7.5}}
    \put(0,7.5){\line(1,0){12}}
\end{picture}$+$ \begin{picture}(42,6.5)(1,0)
  \put(0,0){\line(0,1){7.5}}
  \put(2,0){$\cdots$}
  \put(18,0){\line(0,1){7.5}}
  \put(0,7.5){\line(1,0){18}}
    \put(24,0){\line(0,1){7.5}}
  \put(30,0){\line(0,1){7.5}}
    \put(36,0){\line(0,1){7.5}}
    \put(24,7.5){\line(1,0){12}}
\end{picture} & $\leadsto$ & $2(2r-5)(a^2+b^2)$
\\  
\begin{picture}(36,6.5)(1,0)
  \put(0,0){\line(0,1){7.5}}
  \put(6,0){\line(0,1){7.5}}
  \put(12,0){\line(0,1){7.5}}
  \put(18,0){\line(0,1){7.5}}
  \put(20,0){$\cdots$}
  \put(18,7.5){\line(1,0){18}}
  \put(36,0){\line(0,1){7.5}}
    \put(6,7.5){\line(1,0){6}}
\end{picture}$+$ \begin{picture}(42,6.5)(1,0)
  \put(0,0){\line(0,1){7.5}}
  \put(2,0){$\cdots$}
  \put(18,0){\line(0,1){7.5}}
  \put(0,7.5){\line(1,0){18}}
    \put(24,0){\line(0,1){7.5}}
  \put(30,0){\line(0,1){7.5}}
    \put(36,0){\line(0,1){7.5}}
    \put(24,7.5){\line(1,0){6}}
\end{picture} & $\leadsto$ & $2(2r-5)(a^2+b^2)$
\\  
 \begin{picture}(30,6.5)(1,0)
  \put(1,0){\line(0,1){7.5}}
  \put(6,0){\line(0,1){7.5}}
  \put(8,0){$\cdots$}
  \put(24,0){\line(0,1){7.5}}
  \put(6,7.5){\line(1,0){18}}
    \put(30,0){\line(0,1){7.5}}

\end{picture} & $\leadsto$ & ${{2r-4}\choose{2}}(a^4+b^4)+2(r-2)a^2b^2$  
\end{tabular}
\end{center}
Essential to understanding how we obtain these different coefficients is to understand the last term, which we explain in detail. Let us observe that \begin{align*}K_{ }(\tilde Y(a,b))=&K_{ }((\X_1+a)^2)+K_{ }((\X_2+b)^2)\\&+K_{ }((\X_1+a)^2,(\X_2+b)^2,(\X_1+a)^2,\dots,(\X_1+a)^2)+\dots\\&+ K_{ }((\X_1+a)^2,(\X_1+a)^2,\dots,(\X_2+b)^2,(\X_1+a)^2)
\\&+K_{ }((\X_2+b)^2,(\X_1+a)^2,(\X_2+b)^2,\dots,(\X_2+b)^2)+\dots\\&+ K_{ }((\X_2+b)^2,(\X_2+b)^2,\dots,(\X_1+a)^2,(\X_2+b)^2)
\intertext{by Equation \eqref{rownaniexplusa}, we have }
=&a^2\sum_{i=0}^{2r-4}{{2r-4}\choose{i}}a^iK_{2r-2-i}(X)+b^2\sum_{i=0}^{2r-4}{{2r-4}\choose{i}}b^iK_{2r-2-i}(X)
\\&+(r-2)a^2b^2\sum_{i=0}^{2r-6}{{2r-6}\choose{i}}a^iK_{2r-4-i}(X)+(r-2)a^2b^2\sum_{i=0}^{2r-6}{{2r-6}\choose{i}}b^iK_{2r-4-i}(X).
\end{align*}
From above we obtain coefficient expansions for $K_{2r-4}(X)$ associated with partition  namely $${{2r-4}\choose{2}}(a^4+b^4)+2(r-2)a^2b^2=(r-2)\big((2r-6)(a^4+b^4)+((a^2+b^2)^2)\big).$$ 
 Finally, we get the general formula
\begin{align*}
    K_{r}(\tilde Y(a,b))=&nK_{2r}(X)+(a^2+b^2)\Big(1+{{2r-2}\choose{2}}+2{{2r-3}\choose{1}}+(r-2)(n-2)\Big)K_{2r-2}(X)\\&+\Big[(a^2+b^2)\big(14r-26+(r-1)(2r-3)\big)\\&+(r-2)(2r-6)(a^4+b^4)+\Big(r-2+{{r-2}\choose{2}}(n-2)\Big)(a^2+b^2)^2\Big]K_{2r-4}(X)\\&+\sum_{\substack{\pi\in \NC(2r) \\ \pi\vee \onetwo{r}=\hat{1}_{2r}
}}
\big(\sum_{i=1}a_{i}^2\big)^{r+\#\Sing(\pi)-\#\pi},
    \end{align*} 
where the last factor is obtained by using  the induction assumption and the fact  that odd cumulants disappear, namely $K_{2r-5}(X)=K_{2r-6}(X)=\dots K_3(X)=0$. More precisely, we obtain the last term by using  the argument from the beginning of this proof, because for the rest of the calculation we can assume that $X_i$ have the Boolean normal distribution with mean zero and variance one. 
By assumption, we have 
$K_r(\tilde Y(a,0))=K_r(\tilde Y(a/\sqrt{2},a/\sqrt{2}))$ for all $a\in \R$, 
which yields $K_{2r-4}(X)=0.$ \end{proof}

\subsection{The  Boolean cancellation phenomenon}

In \cite[Lemma 2.17]{EjsmontLehner:2017} we established a curious cancellation result for  symmetrized squares
of centered linear statistics that their distributions  do not depend on the odd cumulants. 
  Now we show that one can use the decomposition from Lemma \ref{lemm:singelton} for a much simpler
approach  leading to a  similar result in the Boolean case, and moreover, we  show that only one cumulant contributes to the distribution.

\begin{prop}
  \label{lem:oddcancellation}
  Let $\X_1, \X_2,\dots, \X_n$ be Boolean independent identically distributed copies
  of a random variable $X$ 
  and
  $L=\sum_{i=1}^n\alpha_i X_i$ be a linear form with $\state(L)=0$. 
  Then the $r^{th}$ cumulant of the quadratic statistic
  $$
  P=\sum_{\sigma\in \SG_n} L_\sigma^2,
  $$
has only one contributing  
term, namely the maximal partition $\hat{1}_{2r}=$. 
\end{prop}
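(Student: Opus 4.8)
The plan is to realise $P$ as a genuine quadratic form in the $\X_i$, substitute it into the cumulant formula of Proposition~\ref{prop:CykliczneVariancja}, and then exploit the centering hypothesis $\state(L)=0$ to kill every term of the resulting sum except the one indexed by the maximal partition.

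First I would compute $P$ explicitly. With $L_\sigma=\sum_i\alpha_i\X_{\sigma(i)}$ and summing over $\sigma\in\SG_n$, a fixed ordered pair of distinct indices is attained by $(n-2)!$ permutations and a fixed single index by $(n-1)!$, so
\begin{align*}
P=(n-1)!\Big(\sum_i\alpha_i^2\Big)\sum_k\X_k^2+(n-2)!\Big(\sum_{k\ne l}\alpha_k\alpha_l\Big)\sum_{k\ne l}\X_k\X_l=\sum_{p,q}c_{p,q}\X_p\X_q.
\end{align*}
The coefficient matrix is $C=a\J+bI_n$ with $a=(n-2)!\big((\sum_i\alpha_i)^2-\sum_i\alpha_i^2\big)$ and $a+b=(n-1)!\sum_i\alpha_i^2$. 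Since $\state(L)=\big(\sum_i\alpha_i\big)\state(X)$, the hypothesis $\state(L)=0$ forces $\sum_i\alpha_i=0$; then $b=-na$ and $C=\kappa\,(nI_n-\J)$ with $\kappa=(n-2)!\sum_i\alpha_i^2$. In particular $C$ is a \emph{zero sum matrix}: every row, and by symmetry every column, sums to $0$.

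Next I would feed this $C$ into the i.i.d.\ identity~\eqref{eq:kumulantsamplevariance}. The matrix $E_{i_0}CE_{i_1}\cdots CE_{i_r}$ equals $\prod_{k=0}^{r-1}c_{i_k,i_{k+1}}$ times the matrix unit supported at position $(i_0,i_r)$, so $\Tr(\J E_{i_0}CE_{i_1}\cdots CE_{i_r})=\prod_{k=0}^{r-1}c_{i_k,i_{k+1}}$ just as in the proof of Proposition~\ref{prop:CykliczneVariancja}, and hence
\begin{align*}
K_r(P)=\sum_{\pi\in\NC(r+1)}M_\pi\,K_{\hat\pi}(X),\qquad M_\pi:=\sum_{\ker\underline i\ge\pi}\ \prod_{k=0}^{r-1}c_{i_k,i_{k+1}},
\end{align*}
with $\hat\pi$ the image of $\pi$ under the bijection of Lemma~\ref{lemm:singelton}. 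It then remains to prove $M_\pi=0$ for every $\pi\ne\hat 1_{r+1}$. Regard such a $\pi$ as an interval partition of $\{0,1,\dots,r\}$ and let $B$ be its block containing the last point $r$; since $\pi\ne\hat 1_{r+1}$, $B$ is a proper interval $B=\{m,m+1,\dots,r\}$ with $m\ge1$. In a tuple with $\ker\underline i\ge\pi$ the common value $q:=i_m=\dots=i_r$ ranges freely over $[n]$ while $i_{m-1}$ lies in another block, and the only factors of $\prod_{k}c_{i_k,i_{k+1}}$ involving $q$ are $c_{i_{m-1},q}$ together with the $r-m$ equal diagonal factors $c_{q,q}=\kappa(n-1)$. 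Performing the $q$-summation first gives $\big(\kappa(n-1)\big)^{r-m}\sum_{q\in[n]}c_{i_{m-1},q}=0$ by the vanishing of the row sums of $C$, so $M_\pi=0$.

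Thus only $\pi=\hat 1_{r+1}$ contributes, and under Lemma~\ref{lemm:singelton} this is the maximal partition $\hat 1_{2r}$, yielding the closed form $K_r(P)=n\big((n-1)!\sum_i\alpha_i^2\big)^{r}K_{2r}(X)$. The step I expect to be the real obstacle is the vanishing $M_\pi=0$: in the free cancellation result of \cite{EjsmontLehner:2017} it was enough to throw away the partitions forcing an odd cumulant, whereas here partitions with both a large first and a large last block would survive that coarser filter, so one genuinely needs the \emph{matrix} coefficient itself to vanish. That is exactly where $\state(L)=0$ does the work, through the zero--row--sum structure of $C$; the trick is to collapse the sum over the index attached to the block that contains the last point, which is what makes the row--sum cancellation available.
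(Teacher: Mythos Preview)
Your argument is correct, and it takes a genuinely different route from the paper's. The paper never computes the coefficient matrix of $P$; instead it works directly with the symmetrised cumulant $\tilde K_\pi(L)=\sum_{\sigma_1,\dots,\sigma_r}K_\pi(L_{\sigma_1},L_{\sigma_1},\dots,L_{\sigma_r},L_{\sigma_r})$, decomposes the index set $\{\pi:\pi\vee\onetwo{r}=\hat1_{2r}\}$ according to the first odd block $B_k=(1,\dots,2k-1)$, and for each $k$ factors out a mixed cumulant $K_{r-k+1}(L_{\sigma_k},P,\dots,P)$. That factor is then killed by multilinearity in the first slot, the $\SG_n$-symmetry of $P$, and $\sum_i\alpha_i=0$. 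Your approach instead recognises $P$ as a quadratic form with matrix $C=\kappa(nI_n-\J)$, feeds this into Proposition~\ref{prop:CykliczneVariancja}, and observes that $C$ is a zero-sum matrix with constant diagonal, so the inner sum $M_\pi$ collapses when one sums over the index attached to the last block. This is essentially the easy (sufficiency) direction of Theorem~\ref{lem:equvalencecancellation}, proved in advance for this particular matrix; it is cleaner and immediately delivers the closed form $K_r(P)=n\big((n-1)!\sum_i\alpha_i^2\big)^r K_{2r}(X)$ that the paper obtains only in Corollary~\ref{wniosekkumulanty}. The paper's argument, on the other hand, is more self-contained in that it does not invoke Proposition~\ref{prop:CykliczneVariancja}, and its factoring trick via the symmetry of $P$ is of independent interest.

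One small caveat, shared with the paper: the implication ``$\state(L)=0\Rightarrow\sum_i\alpha_i=0$'' tacitly assumes $\state(X)\ne0$. When $\state(X)=0$ the hypothesis places no constraint on $\sum_i\alpha_i$, and in fact for generic $\alpha$ with $\sum_i\alpha_i\ne0$ the matrix $C$ is no longer zero-sum, so partitions $\hat\pi$ with two odd blocks of size $\ge3$ can contribute. In all the applications (notably the sample variance) one has $\sum_i\alpha_i=0$ outright, so this does not affect the corollaries.
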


\begin{proof}
Let $B_k=(1,\dots,2k-1)$, with  $k\in [r]$ be  an enumeration of the first  odd block of $[2r]$ and let 
$\NC^{B_k}(2r)=\{\pi\in\NC(2r)\mid B_k\in\pi \}$. 
Directly from Lemma \ref{lemm:singelton}, we have the disjoint decomposition 
\begin{align}\label{eq:Ktilde}
\NC(2r)= \NC^{B_1}(2r) \cup \NC^{B_2}(2r)\cup \NC^{B_3}(2r) \cup\dots \cup \NC^{B_r}(2r)\cup \{\}.
\end{align} 
First, we apply the product formula of Theorem~\ref{thm:krawczyk} and obtain
  \begin{align*}
    K_r(P) &= \sum_{\sigma_1,\dots,\sigma_r\in\SG_n}
               K_r(L_{\sigma_1}^2,L_{\sigma_2}^2,\dots,L_{\sigma_r}^2)\\
           &= \sum_{\sigma_1,\dots,\sigma_r\in\SG_n}
               \sum_{\substack{\pi\in \NC(2r)\\ \pi\vee \onetwo{r}=\hat{1}_{2r}}}
               K_\pi(L_{\sigma_1},L_{\sigma_1},
                    L_{\sigma_2},L_{\sigma_2},
                    \dots,
                    L_{\sigma_r},L_{\sigma_r})\\
           &= \sum_{\substack{\pi\in \NC(2r)\\ \pi\vee \onetwo{r}=\hat{1}_{2r}}}
              \tilde{K}_\pi(L)
  \end{align*}
  where $
  \tilde{K}_\pi(L)
  =\sum_{\sigma_1,\sigma_2,\dots,\sigma_r\in\SG_n} 
    K_\pi(L_{\sigma_1},L_{\sigma_1},
          L_{\sigma_2},L_{\sigma_2},
          \dots,
          L_{\sigma_r},L_{\sigma_r}).
  $
 We can split  the last sums according to the decomposition  \eqref{eq:Ktilde} 
  as 
  \begin{equation}
    \label{eq:telescope}
    \sum_{\substack{\pi\in \NC(2r)\\ \pi\vee \onetwo{r}=\hat{1}_{2r}}}
              \tilde{K}_\pi(L) = 
    \sum_{\pi=}
    \tilde{K}_\pi(L)
    +\sum_{k=1}^r
    \sum_{\substack{\pi\in \NC^{B_k}(2r)\\ \pi\vee\onetwo{r}=\hat{1}_{2r}}}
    \tilde{K}_\pi(L)
    .
  \end{equation}
 
\noindent
   Every $\pi$ in this sum splits $[2r]$
  into the two intervals $B_k=(1,\dots,2k-1)$ and it is a complement $(2k,\dots,2r)$, hence we have 
  \begin{align*}
    \sum_{\substack{\pi\in \NC^{B_k}(2r)\\ \pi\vee\onetwo{r}=\hat{1}_{2r}}} \tilde{K}_\pi(L)
    &=\sum_{\sigma_1,\sigma_2,\dots,\sigma_k\in\SG_n} {K}_{2k-1}(L_{\sigma_1},L_{\sigma_1},\dots,L_{\sigma_{k-1}},L_{\sigma_{k-1}},L_{\sigma_k})\\& \times 
\sum_{\sigma_{k+1},\dots,\sigma_r\in\SG_n}    
    \sum_{\substack{\pi\in \NC(2(r-k)+1)\\ \pi\vee \begin{picture}(62,3.5)(1,0)
  \put(2,0){\line(0,1){4.5}}
  \put(8,0){\line(0,1){4.5}}
  \put(14,0){\line(0,1){4.5}}
  \put(20,0){\line(0,1){4.5}}
  \put(26,0){\line(0,1){4.5}}
\put(33.5,0){$\cdots$}
  \put(56,0){\line(0,1){4.5}}
  \put(62,0){\line(0,1){4.5}}
  \put(2,4.5){\line(1,0){0}}
  \put(8,4.5){\line(1,0){6}}
  \put(20,4.5){\line(1,0){6}}
  \put(56,4.5){\line(1,0){6}}
\end{picture}=\hat{1}_{2(r-k)+1}}}
               K_\pi(L_{\sigma_k},L_{\sigma_{k+1}},L_{\sigma_{k+1}},
                    \dots,
                    L_{\sigma_r},L_{\sigma_r})
\\&    
   =\sum_{\sigma_1,\sigma_2,\dots,\sigma_k\in\SG_n} {K}_{2k-1}(L_{\sigma_1},L_{\sigma_1},\dots,L_{\sigma_{k-1}},L_{\sigma_{k-1}},L_{\sigma_k})
   \times K_{r-k+1}(L_{\sigma_k},P,\dots,P).
  \end{align*}
Finally,   by multilinearity,     the factor
$$
K_{r-k+1}(L_{\sigma_k},P,P,\dots,P)=\sum_{i=1}^n \alpha_i K_{r-k+1}(X_{\sigma_k (i)},P,\dots,P) =K_{r-k+1}(X_1,P,\dots,P)\sum_{i=1}^n \alpha_i 
$$
vanishes for every $\sigma_k$ because  in the last equality we use the fact that  $P$ is symmetric polynomial in variables $X_1,\dots, X_n.$
\end{proof}

\begin{cor}\label{wniosekkumulanty}
  Let $P$ be as in Proposition \ref{lem:oddcancellation}, then 
  
\begin{align*}
K_r(P)&= K_r\big((n-1)!(\alpha_1^2+\dots+\alpha_n^2)(X_1^2+\dots+X_n^2)+(n-2)!(\sum_{i\neq j}\alpha_i \alpha_j)\sum_{i\neq j}X_iX_j\big)
\intertext{by Proposition \ref{lem:oddcancellation}}
&=
K_r\big((n-1)!(\alpha_1^2+\dots+\alpha_n^2)(X_1^2+\dots+X_n^2)\big)
\intertext{and by multilinearity  and Boolean independence }&=
n(n-1)!^r(\alpha_1^2+\dots+\alpha_n^2)^r
K_{2r}(X).
\end{align*}
In the case of   the sample variance $Q_n
=\sum_{i=1}^n(\X_i-\overline{\X})^2$
we apply Proposition~\ref{lem:oddcancellation} for $L=X_1-\overline{X}$ and 
$P=\sum_{\sigma\in\SG_n} L_\sigma^2=(n-1)!\,Q_n$ 
to conclude that
\begin{align*}
K_r(P)&=
n(n-1)!^r\Big(\Big(1-\frac{1}{n}\Big)^2 +\underbrace{\frac{1}{n^2}+\dots+\frac{1}{n^2}}_{n-1}\Big)^rK_{2r}(X)
\\&=n(n-1)!^r\Big(1-\frac{1}{n}\Big)^rK_{2r}(X)
\end{align*}
and from above we have 
$$
K_r(Q_n) =n
\left(
1-\frac{1}{n}
\right)^r 
K_{2r}(X).
$$
\end{cor}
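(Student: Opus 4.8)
The plan is to realize $P$ explicitly as a quadratic form in $\X_1,\dots,\X_n$ and then combine Proposition~\ref{lem:oddcancellation} with the cumulant formula~\eqref{eq:kumulantsamplevariancenotiid}. First I would expand $P=\sum_{\sigma\in\SG_n}L_\sigma^2$: writing $L_\sigma=\sum_j\alpha_{\sigma^{-1}(j)}\X_j$ and squaring, the coefficient of $\X_j\X_k$ in $P$ is $\sum_{\sigma\in\SG_n}\alpha_{\sigma^{-1}(j)}\alpha_{\sigma^{-1}(k)}$. A counting argument — each value of $\sigma^{-1}(j)$ is attained by $(n-1)!$ permutations, each ordered pair of distinct values of $(\sigma^{-1}(j),\sigma^{-1}(k))$ by $(n-2)!$ permutations — gives
$$
P=(n-1)!\,\Big(\sum_i\alpha_i^2\Big)\sum_j\X_j^2+(n-2)!\,\Big(\sum_{i\neq j}\alpha_i\alpha_j\Big)\sum_{i\neq j}\X_i\X_j,
$$
which is the first displayed identity; hence $K_r(P)$ is the $r$-th cumulant of the right-hand side.

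Next I would discard everything but the maximal partition. By Proposition~\ref{lem:oddcancellation} only $\pi=\hat{1}_{2r}$ contributes to $K_r(P)$; feeding the quadratic form above into~\eqref{eq:kumulantsamplevariancenotiid} — its system matrix $A$ has diagonal entries $a:=(n-1)!\sum_i\alpha_i^2$ and off-diagonal entries $(n-2)!\sum_{i\neq j}\alpha_i\alpha_j$ — and using that $\hat{1}_{2r}$ is a single block (so $K_{\hat{1}_{2r}}=K_{2r}$), I obtain
$$
K_r(P)=\sum_{i_0,\dots,i_r\in[n]}\Tr(\J E_{i_0}AE_{i_1}\dots AE_{i_r})\,K_{2r}(\X_{i_0},\X_{i_1},\X_{i_1},\dots,\X_{i_{r-1}},\X_{i_{r-1}},\X_{i_r}).
$$
Boolean independence and vanishing of mixed cumulants~\eqref{eq:kerh>=pi} force this $K_{2r}$ to vanish unless $i_0=i_1=\dots=i_r=:i$, in which case $E_iAE_i=aE_i$ gives $\Tr(\J E_iAE_i\dots AE_i)=a^r$ and $K_{2r}(\X_i,\dots,\X_i)=K_{2r}(X)$ since the $X_i$ are identically distributed. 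Summing over $i\in[n]$ yields $K_r(P)=n\,a^r\,K_{2r}(X)=n(n-1)!^{\,r}(\alpha_1^2+\dots+\alpha_n^2)^{\,r}K_{2r}(X)$. Equivalently: restricting to $\hat{1}_{2r}$ pins the two copies of each $\X_{i_k}$ to a common index, so the off-diagonal part of $A$ never survives; one may replace $P$ by its diagonal part $(n-1)!(\sum_i\alpha_i^2)\sum_j\X_j^2$, whereupon multilinearity extracts $a^r$ and Boolean independence collapses the $r$-fold sum to a single index.

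For the sample variance I would take $L=\X_1-\overline{\X}$, so $\alpha_1=1-\tfrac1n$ and $\alpha_j=-\tfrac1n$ for $j\ge2$; since $\overline{\X}$ is symmetric, $L_\sigma=\X_{\sigma(1)}-\overline{\X}$ and therefore $P=\sum_{\sigma}L_\sigma^2=(n-1)!\,Q_n$, because the stabilizer of each index in $\SG_n$ has size $(n-1)!$. A direct computation gives $\sum_i\alpha_i^2=(1-\tfrac1n)^2+(n-1)\tfrac1{n^2}=1-\tfrac1n$, so the previous step gives $(n-1)!^{\,r}K_r(Q_n)=K_r(P)=n(n-1)!^{\,r}(1-\tfrac1n)^rK_{2r}(X)$, whence $K_r(Q_n)=n(1-\tfrac1n)^rK_{2r}(X)$.

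The only genuinely delicate point is the reduction in the second paragraph: one must invoke Proposition~\ref{lem:oddcancellation} to isolate \emph{precisely} the maximal partition $\hat{1}_{2r}$, so that the surviving block has size $2r$ and produces $K_{2r}(X)$ — and not the Poisson-type sum $K_r(X^2)$ over $\NC(r+1)$ that a naive expansion of $\sum_j\X_j^2$ through multilinearity and Corollary~\ref{eq:poisson} would yield — and then verify that the off-diagonal entries of $A$ are annihilated by the index collapse. Everything else is routine permutation bookkeeping together with the elementary identity $\sum_i\alpha_i^2=1-1/n$.
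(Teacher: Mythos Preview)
Your proof is correct and follows the same route as the paper: expand $P$ explicitly as a quadratic form, invoke Proposition~\ref{lem:oddcancellation} to isolate the single contribution from $\hat{1}_{2r}$, and then let Boolean independence collapse the indices to the diagonal. Your version is in fact more careful than the paper's terse display --- your closing caveat is exactly on point, since the intermediate line $K_r(P)=K_r\bigl((n-1)!(\sum_i\alpha_i^2)\sum_jX_j^2\bigr)$ read literally would give $nK_r(X^2)$ rather than $nK_{2r}(X)$, and it is precisely the restriction to $\hat{1}_{2r}$ carried through the computation (as you do via~\eqref{eq:kumulantsamplevariancenotiid}) that makes the final identity valid.
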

\noindent
The main result of \cite{EjsmontLehner:2019:commutators} is a characterization of quadratic forms, which exhibits the phenomenon of cancellation of odd cumulants, i.e.,
whose distributions do not depend on the odd cumulants of the distributions of the  arbitrary free random variables.   
Note that the similar result for 
free identically distributed families is still open. 
Now, we present the solution of this problem  in a Boolean version, which indicates the  direction to go in free probability.   

\begin{theo} 
  \label{lem:equvalencecancellation}
Let $\X_1, \X_2,\dots, \X_n$ be Boolean independent identically distributed copies
  of a random variable $X$,
   $A=[a_{i,j}]_{i,j=1}^n\in M_n^{sa}(\C)$ 
  and $T_n=\sum a_{i,j}X_iX_j$ a quadratic form.  
    Then  the $r^{th}$ cumulants of the $T_n$ 
have only one cotributing  
term  $\hat{1}_{2r}=$, if and only if  
 $A$ is a zero sum matrix with constant diagonal.   

\end{theo}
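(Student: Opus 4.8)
The plan is to push the matrix $A$ through the explicit cumulant formula of Proposition~\ref{prop:CykliczneVariancja} and recast the condition ``only $\hat{1}_{2r}$ contributes'' as the vanishing of an explicit family of matrix expressions built from $A$ and its diagonal. Starting from \eqref{eq:kumulantsamplevariance} and using the identity $\Tr(\J E_{i_0}AE_{i_1}\cdots AE_{i_r})=a_{i_0,i_1}a_{i_1,i_2}\cdots a_{i_{r-1},i_r}$, one writes
\[
K_r(T_n)=\sum_{\pi\in\NC(r+1)}c_\pi(A)\,K_{\hat\pi}(X),\qquad
c_\pi(A):=\sum_{\ker\underline i\,\geq\,\pi}a_{i_0,i_1}a_{i_1,i_2}\cdots a_{i_{r-1},i_r},
\]
where $\underline i=(i_0,\dots,i_r)$ and $\hat\pi$ is the image of $\pi$ under the bijection of Lemma~\ref{lemm:singelton}. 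Since the $X_i$ are identically distributed, the statement ``$K_r(T_n)$ has only the contributing term $\hat{1}_{2r}$, for every $r$'' is exactly ``$c_\pi(A)=0$ for every $r$ and every $\pi\in\NC(r+1)$ other than the one-block partition''. Collapsing the index on each interval block of $\pi=\{B_1,\dots,B_p\}$ (blocks listed in increasing order) to a single value $u_t$ yields the closed form
\[
c_\pi(A)=\sum_{u_1,\dots,u_p}\ \prod_{t=1}^{p}a_{u_t,u_t}^{\,\abs{B_t}-1}\ \prod_{t=1}^{p-1}a_{u_t,u_{t+1}}
=\One\,D^{\abs{B_1}-1}A\,D^{\abs{B_2}-1}A\cdots A\,D^{\abs{B_p}-1}\,\One^{\ast},
\]
with $D:=\diag(a_{1,1},\dots,a_{n,n})$, exactly $p-1$ copies of $A$, and the convention $D^{0}:=I$.

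For the ``if'' part, suppose $A$ has constant diagonal $c$ and vanishing row sums, i.e.\ $D=cI$ and $A\One^{\ast}=0$. Then for $\pi\neq\hat{1}_{r+1}$ (so $p\geq2$) one gets $c_\pi(A)=c^{\,(r+1)-p}\,\One\,A^{p-2}\,(A\One^{\ast})=0$, so only $\hat{1}_{2r}$ survives, with $K_r(T_n)=\bigl(\textstyle\sum_i a_{i,i}^{\,r}\bigr)K_{2r}(X)=n\,c^{\,r}\,K_{2r}(X)$. The hypothesis is used minimally here: ``constant diagonal'' serves only to factor out the power of $c$, and ``zero sum'' only to annihilate $A\One^{\ast}$.

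For the converse, assume $c_\pi(A)=0$ for all $r$ and all non-maximal $\pi$. Taking $r=2$ and $\pi=\{\{0\},\{1\},\{2\}\}$ gives $c_\pi(A)=\One A^{2}\One^{\ast}=\Tr(\J A^{2})$, and Lemma~\ref{lemma:equvalenttwoforall} forces $A$ to be a zero sum matrix. For the diagonal, for each $k\geq1$ I would test the partition $\pi_k=\{(0,\dots,k),\,(k{+}1),\,(k{+}2,\dots,2k{+}2)\}$ of $2k+3$ points: since $A$ is self-adjoint, $c_{\pi_k}(A)=\One\,D^{k}A^{2}D^{k}\,\One^{\ast}=\norm{A\,v^{(k)}}^{2}$, where $v^{(k)}:=D^{k}\One^{\ast}$ is the column vector with $i$-th entry $a_{i,i}^{\,k}$. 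Hence $A\,v^{(k)}=0$ for all $k\geq1$, and together with $A\One^{\ast}=0$ this says that $A$ annihilates $\One^{\ast}$ and every Hadamard power of its diagonal vector. A Vandermonde argument over the distinct diagonal values --- whose indicator vectors lie in $\mathrm{span}\{\One^{\ast},v^{(1)},v^{(2)},\dots\}\subseteq\ker A$ --- combined with the self-adjointness of $A$, should then force the diagonal to be constant. This last passage, from ``$A$ kills the diagonal vector and all its powers'' to ``the diagonal is constant'', is the crux of the argument: it is the point where self-adjointness of $A$ and the already-derived zero-sum property have to be used together in order to extract a genuine constraint on the \emph{values} of the diagonal entries, not merely on the subspace they span.
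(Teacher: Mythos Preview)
Your overall strategy matches the paper's: expand $K_r(T_n)$ via Proposition~\ref{prop:CykliczneVariancja}, write $K_r(T_n)=\sum_{\pi\in\NC(r+1)}c_\pi(A)\,K_{\hat\pi}(X)$, and identify the hypothesis with the vanishing of $c_\pi(A)$ for every non-maximal $\pi$. Your closed form $c_\pi(A)=\One\,D^{|B_1|-1}A\,D^{|B_2|-1}A\cdots A\,D^{|B_p|-1}\One^{\ast}$ is correct and makes the ``if'' direction transparent; the paper argues the same way through the Hadamard-cumulant expression \eqref{eq:kumulantsamplevariance2}. For the zero-sum half of ``only if'' you test $\pi=\hat 0_{3}$ and invoke Lemma~\ref{lemma:equvalenttwoforall}; the paper does the same with $\hat 0_{r+1}$ for all $r$, which by that lemma is equivalent.

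The divergence --- and the gap --- is in the constant-diagonal step. The paper isolates the \emph{two-block} partition $\sigma$ (for odd $r$), obtains $\Tr\bigl(\J D^{k}AD^{k}\bigr)=0$ for all $k$, and then appeals to the prepared Lemma~\ref{lemm:macierzediagonalne}. You instead test the \emph{three-block} partitions $\pi_k$, which elegantly give $c_{\pi_k}(A)=\lVert A\,D^{k}\One^{\ast}\rVert^{2}$ and hence $Av^{(k)}=0$. Your Vandermonde step then yields $Ae_j=0$ for every indicator $e_j$ of a diagonal level set, but this does \emph{not} force the diagonal to be constant, even together with self-adjointness and zero row sums. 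Take
\[
A=\begin{pmatrix}1&-1&0&0\\-1&1&0&0\\0&0&-1&1\\0&0&1&-1\end{pmatrix}\in M_4^{sa}(\C),
\]
which is zero-sum with diagonal $(1,1,-1,-1)$: here $D^{k}\One^{\ast}$ is either $\One^{\ast}$ or $(1,1,-1,-1)^{T}$, and $A$ annihilates both, so every $c_\pi(A)$ with $p\ge 2$ blocks vanishes although the diagonal is not constant. Thus the family $\{\pi_k\}$ you chose is too small to close the argument, and your speculative ``self-adjointness plus zero-sum'' rescue fails on this example. To finish along the paper's lines you must trade your hand-waved passage for an appeal to Lemma~\ref{lemm:macierzediagonalne}; in doing so, note that the matrix above also satisfies $\Tr\bigl(\J D^{k}AD^{k}\bigr)=0$ for every $k$, so that lemma's hypothesis and proof deserve a careful look as well.
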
 
\begin{proof}
  Let  $\hat\pi$ be the image of $\pi\in\NC(r+1)$ under the bijection introduced in Lemma~\ref{lemm:singelton}, 
  then $\hat \pi$ has two singletons if and only if $\pi$ consists only singletons, namely
   \begin{align*}
    \hat \pi =
\begin{picture}(56,6.5)(1,0)
  \put(2,0){\line(0,1){4.5}}
  \put(8,0){\line(0,1){4.5}}
  \put(14,0){\line(0,1){4.5}}
  \put(20,0){\line(0,1){4.5}}
  \put(26,0){\line(0,1){4.5}}
  \put(44,0){\line(0,1){4.5}}
  \put(50,0){\line(0,1){4.5}}
  \put(56,0){\line(0,1){4.5}}
\put(28.0,0){$\cdots$}
  \put(8,4.5){\line(1,0){6}}
  \put(20,4.5){\line(1,0){6}}
  \put(44,4.5){\line(1,0){6}}
\end{picture}\in\NC(2r) \longleftrightarrow \pi= \in\NC(r+1)  
  \end{align*}  and so by Proposition \ref{prop:CykliczneVariancja} 
 we can write 
  \begin{align}\label{eq:dowodrownowaznsoci}
    K_r(T_n) = \Tr(J_n A_n^r)K_1^2(X) K_2^{r-1}(X)+\sum_{ \substack{\pi\in \NC(r+1)\\ \pi>  }}\Tr (\EB[\overline{\Krewl \pi} ] (\underbrace{\J,A,\dots,A}_{r+1}))K_{\hat\pi}(X).
  \end{align} 
If we assume  that the cumulants of $T_n$ 
have only one contributing term, then the factor related to $K_1^2(X)$ must vanish, which implies that  $\Tr(\J A^{r})=0,$ for all $r\in \N$ and by Lemma \ref{lemma:equvalenttwoforall} $A$ is zero sum matrix. We futher  extract from the formula \eqref{eq:dowodrownowaznsoci},  this partition $\sigma$ from $\NC(r+1)$ which has two odd blocks of the same  size, namely    \begin{align*}
    \hat \sigma =\begin{picture}(32,6.5)(1,0)
  \put(0,0){\line(0,1){4.5}}
  \put(6,0){\line(0,1){4.5}}
  \put(12,0){\line(0,1){4.5}}
  \put(18,0){\line(0,1){4.5}}
  \put(36,0){\line(0,1){4.5}}
  \put(42,0){\line(0,1){4.5}}
\put(20,0){$\cdots$}
  \put(0,4.5){\line(1,0){6}}
  \put(12,4.5){\line(1,0){6}}
  \put(36,4.5){\line(1,0){6}}
  \put(44,0){$\cdots$}
 \put(60,0){\line(0,1){4.5}}
  \put(66,0){\line(0,1){4.5}}
   \put(60,4.5){\line(1,0){6}}
  \put(72,0){\line(0,1){4.5}}
  \put(78,0){\line(0,1){4.5}}
   \put(72,4.5){\line(1,0){6}}
 
   \put(0,8){\line(0,1){4.5}}
  \put(6,8){\line(0,1){4.5}}
  \put(12,8){\line(0,1){4.5}}
  \put(18,8){\line(0,1){4.5}}
  \put(36,8){\line(0,1){4.5}}
  \put(0,12.5){\line(1,0){36}}
  
 
   \put(42,8){\line(0,1){4.5}}
  \put(60,8){\line(0,1){4.5}}
  \put(66,8){\line(0,1){4.5}}
  \put(72,8){\line(0,1){4.5}}
  \put(78,8){\line(0,1){4.5}}
  \put(42,12.5){\line(1,0){36}}
  
\end{picture} \qquad \qquad \in\NC(2r) \longleftrightarrow \sigma=
\begin{picture}(26,3.5)(1,0)
  \put(2,0){\line(0,1){4.5}}
  \put(8,0){\line(0,1){4.5}}
\put(10,0){$\cdots$}
  \put(26,0){\line(0,1){4.5}}

   \put(32,0){\line(0,1){4.5}}
\put(34,0){$\cdots$}
 \put(50,0){\line(0,1){4.5}}
  \put(56,0){\line(0,1){4.5}}
  
  \put(2,8){\line(0,1){4.5}}
  \put(8,8){\line(0,1){4.5}}

  \put(26,8){\line(0,1){4.5}}

   \put(32,8){\line(0,1){4.5}}

 \put(50,8){\line(0,1){4.5}}
  \put(56,8){\line(0,1){4.5}}
 \put(2,12.5){\line(1,0){24}} 
 \put(32,12.5){\line(1,0){24}} 
\end{picture} \quad \quad \quad  \in\NC(r+1)\quad  \text{ for }r \text{ odd}.
  \end{align*}  
  Using this, we further expand \eqref{eq:dowodrownowaznsoci}, for odd  $r$  and obtain
    \begin{align*}
    K_r(T_n) &= \Tr (\EB[\overline{\Krewl \sigma} ] (\underbrace{\J,A,\dots,A}_{r+1}))K_{\hat\sigma}(X)+ \sum_{ \substack{\pi\in \NC(r+1)\\ \pi>   \\\pi \neq \sigma }}\Tr (\EB[\overline{\Krewl \pi} ] (\underbrace{\J,A,\dots,A}_{r+1}))K_{\hat\pi}(X)    
 \intertext{and observe that $\overline{\Krewl \sigma}=\begin{picture}(32,6.5)(1,0)
   \put(0,0){\line(0,1){8}}
  \put(6,0){\line(0,1){4.5}}
  \put(12,0){\line(0,1){4.5}}
 \put(16,-2){$\cdots$}
  \put(32,0){\line(0,1){4.5}}
  \put(38,0){\line(0,1){4.5}}
     \put(44,0){\line(0,1){8}}
  \put(6,4.5){\line(1,0){32}}
   \put(0,8){\line(1,0){44}}

  \put(50,0){\line(0,1){4.5}}
  \put(56,0){\line(0,1){4.5}}
 \put(60,-2){$\cdots$}
  \put(76,0){\line(0,1){4.5}}
  \put(82,0){\line(0,1){4.5}}
    \put(50,4.5){\line(1,0){32}}
\end{picture}\qquad \qquad \text{ ,}$ which yields } &= \Tr (\J \EB(\underbrace{A,\dots,A}_{\frac{r-1}{2}})A \EB(\underbrace{A,\dots,A}_{\frac{r-1}{2}}))K_{r}^2(X)
    + \sum_{ \substack{\pi\in \NC(r+1)\\ \pi>   \\\pi \neq \sigma }}\Tr (\EB[\overline{K\Krewl \pi} ] (\underbrace{\J,A,\dots,A}_{r+1}))K_{\hat\pi}(X) 
    \\ &= \Tr (\J \ED(A^{\odot\frac{r-1}{2}})A \ED(A^{\odot\frac{r-1}{2}}))K_{r}^2(X)
    + \sum_{ \substack{\pi\in \NC(r+1)\\ \pi>   \\\pi \neq \sigma }}\Tr (\EB[\overline{\Krewl \pi} ] (\underbrace{\J,A,\dots,A}_{r+1}))K_{\hat\pi}(X). 
  \end{align*} 
Since we consider the interval partitions,  there is just one coefficient appearing in the term with  $K_{r}^2(X)$.  Thus we see that $$\Tr (\J \ED(A^{\odot\frac{r-1}{2}})A \ED(A^{\odot\frac{r-1}{2}}))=\Tr (\J \diag(A)^{\frac{r-1}{2}} A \diag(A)^{\frac{r-1}{2}})=0$$ for all odd $r$.
Thus from Lemma 
\ref{lemm:macierzediagonalne}, this implies that  $A$ is constant diagonal matrix.   

\noindent
The implication in the other direction is a simple manipulation with a constant diagonal matrix.
The  Hadamard cumulants which are not associated with the first block    are of the form 
$$\ED(A^{\odot k})=\left[\begin{smallmatrix}
    a^k & 0\\
    0& a^k
    \end{smallmatrix}\right]_n.$$  
From this we see that the  Hadamard cumulants  are commutative and  formula \eqref{eq:kumulantsamplevariance2} can be rewritten   as
\begin{align}\label{eq:PomDowoddiagonal} 
K_r(T_n) =\sum_{ \pi\in \NC(r+1)} \Tr(\J A^{|B_{1}|-1})  a^{r+1-|B_{1}|}
K_{\hat\pi}(X)
    \end{align}
    where $B_1$ is the first block of $\overline K(\pi)$. By Lemma \ref{lemma:equvalenttwoforall} we have  $\Tr(\J A^{|B_{1}|-1})=0$ whenever size of block $B_1$ is bigger  then one, and from this we conclude that the   formula \eqref{eq:PomDowoddiagonal}  takes non zero value when 
    $B_1=(1)$, i.e. $\pi=$, and in this case  we have  $K_r(T_n) =na^r  K_{2r}(X)$.
\end{proof}
\begin{cor}These findings provide
one more argument for the cancellation phenomenon  of the sample variance 
$$
\Q_n=\sum_{i=1}^n(\X_i-\overline{\X})^2
=\Big(1-\frac{1}{n}\Big)\sum_{i=1}^n\X_i^2-\frac{1}{n}\sum_{i,j=1, \textrm{ }i\neq j}^n\X_i\X_j.
$$
The corresponding matrix from Theorem \ref{lem:equvalencecancellation} is $A=\left[\begin{smallmatrix}
    1-\frac{1}{n} & -\frac{1}{n}\\
    -\frac{1}{n}& 1-\frac{1}{n} \\
    \end{smallmatrix}\right]_n$. It is easily verified that $A$ is  an orthogonal projection of rank $n-1$ and we see  that $\Tr(\J A)=\Tr(\J A^{2})=0.$
\end{cor}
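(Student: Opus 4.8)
The statement is a verification for the specific matrix attached to the sample variance, so the plan is to (i) rewrite $\Q_n$ into the announced quadratic form and read off its coefficient matrix $A$, (ii) identify $A$ with $I-P_n$ and deduce that it is a rank-$(n-1)$ orthogonal projection, and (iii) check the two trace conditions, which by Theorem \ref{lem:equvalencecancellation} is exactly what forces the single surviving term $\hat1_{2r}$.

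First I would expand $\Q_n$. Writing $\overline{X}=\frac1n\sum_i X_i$ and using $(X_i-\overline X)^2=X_i^2-X_i\overline X-\overline X X_i+\overline X^2$, I sum over $i$. Since $\sum_i X_i=n\overline X$, each of the sums $\sum_i X_i\overline X$, $\sum_i\overline X X_i$ and $\sum_i\overline X^2$ equals $n\overline X^2$, so with signs $-,-,+$ they combine to $-n\overline X^2$ and one obtains $\Q_n=\sum_i X_i^2-n\overline X^2$. Replacing $n\overline X^2=\frac1n\sum_{i,j}X_iX_j=\frac1n\sum_i X_i^2+\frac1n\sum_{i\neq j}X_iX_j$ yields the displayed form $(1-\frac1n)\sum_i X_i^2-\frac1n\sum_{i\neq j}X_iX_j$. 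Hence the (real symmetric) coefficient matrix has diagonal entries $1-\frac1n$ and off-diagonal entries $-\frac1n$, i.e. $A=I-\frac1n\J=I-P_n$.

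Next I would observe that $P_n=\frac1n\J=\xi\xi^T$, where $\xi=\frac{1}{\sqrt n}(1,\dots,1)^T$ is the unit vector from the convergence-in-state subsection; thus $P_n$ is the rank-one orthogonal projection onto $\mathrm{span}(\xi)$. Consequently $A=I-P_n$ is self-adjoint and satisfies $A^2=I-2P_n+P_n^2=I-P_n=A$, so it is the orthogonal projection onto $\xi^\perp$ and has rank $n-1$.

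For the trace conditions I would first check that each row of $A$ sums to $(1-\frac1n)+(n-1)(-\frac1n)=0$, so that $A$ is a zero sum matrix; equivalently $A\,\mathbf 1^\ast=(I-P_n)\mathbf 1^\ast=\mathbf 1^\ast-\mathbf 1^\ast=0$. By Lemma \ref{lemma:equvalenttwoforall} a zero sum matrix satisfies $\Tr(\J A^k)=0$ for every $k\in\N$, so in particular $\Tr(\J A)=\Tr(\J A^2)=0$. Since $A$ also has constant diagonal $1-\frac1n$, Theorem \ref{lem:equvalencecancellation} applies and the $r$-th cumulant of $\Q_n$ retains only the maximal contributing term $\hat1_{2r}$, which is precisely the cancellation phenomenon. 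Every step here is routine and there is no genuine obstacle; the only point demanding a little care is the noncommutative expansion in the first step, where one must keep both orders $X_i\overline X$ and $\overline X X_i$ until they collapse to the common scalar-weighted term $n\overline X^2$.
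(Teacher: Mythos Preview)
Your proposal is correct and follows exactly the line the paper intends: the corollary in the paper has no separate proof and simply asserts that the matrix $A=I-P_n$ is ``easily verified'' to be a rank-$(n-1)$ orthogonal projection with $\Tr(\J A)=\Tr(\J A^2)=0$; you supply precisely those routine verifications, invoking Lemma~\ref{lemma:equvalenttwoforall} and Theorem~\ref{lem:equvalencecancellation} as the paper implicitly does.
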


\begin{Rem}
 We note that a cancellation  phenomenon occurs also for the free commutator  \cite{NicaSpeicher:1998,EjsmontLehner:2019:commutators} but this  is not true in the case
of  Boolean probability. Indeed, if $X$ and $Y$ are Boolean independent, then $$K_2(i(XY-YX))=K_1^2(X)K_2(Y)+K_1^2(Y)K_2(X).$$ But from Theorem
\ref{lem:equvalencecancellation}, we conclude that for identical distributed random variables $X_1,X_2,X_3$ cancellation phenomenon is true for 
$i(X_1X_2-X_2X_1)+i(X_3X_1-X_1X_3)+i(X_2X_3-X_3X_2)$  because the corresponding matrix is  
$$\left[\begin{smallmatrix}
    0 & i & -i \\
    -i& 0 & i  \\
    i & -i & 0
    \end{smallmatrix}\right].$$
\end{Rem}
\noindent
We conclude this section with the Boolean  analogue of the $\chi^2$-conjecture ()see \cite{EjsmontLehner:2017}). 
\begin{prop} 
 Let $\X_1, \X_2,\dots, \X_n \in \A_{sa}$ be the Boolean  copies of
a random variable $X$ with  variance $1$.
 Then $\Q_n=\sum_{i=1}^n(\X_i-\overline{\X})^2$
is distributed according to  Boolean Poisson, with  rate $n$ and the jump size $1-\frac{1}{n}$, if and only if $\X$ is even  Poisson random variable. 
\label{twr:OddElement} 
\end{prop}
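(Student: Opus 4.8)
The plan is to compute the Boolean cumulants $K_r(\Q_n)$ explicitly and compare them with the cumulants of a Boolean Poisson law $\nu(n, 1-\tfrac1n)$. First I would recall from Corollary \ref{wniosekkumulanty} that for a family $X_1,\dots,X_n$ of Boolean independent identically distributed copies of $X$ one has the closed form
\[
K_r(\Q_n) = n\Big(1-\frac1n\Big)^r K_{2r}(X).
\]
This identity reduces the whole statement to a one-line comparison: $\Q_n$ has the law $\nu(n, 1-\tfrac1n)$ if and only if its cumulants equal $K_r\big(\nu(n,1-\tfrac1n)\big) = \lambda\alpha^r = n\big(1-\tfrac1n\big)^r$ for every $r\ge 1$. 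Substituting the formula above, this happens precisely when $n\big(1-\tfrac1n\big)^r K_{2r}(X) = n\big(1-\tfrac1n\big)^r$ for all $r$, i.e. when $K_{2r}(X)=1$ for all $r\ge 1$ (the factors $n\big(1-\tfrac1n\big)^r$ are nonzero for $n\ge 2$), which by definition is exactly the statement that $X$ is an even Poisson distribution.

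There is one subtlety I would address carefully: Corollary \ref{wniosekkumulanty} was derived under the running hypothesis of the surrounding subsection, where $X$ had mean zero. Here $X$ is only assumed to have variance $1$, not mean zero. So I would first reduce to the centered case. Since $\Q_n = \sum_{i=1}^n(\X_i - \overline{\X})^2$ is invariant under the shift $\X_i \mapsto \X_i - \state(X)$ — indeed $\X_i - \overline{\X} = (\X_i - \state(X)) - \overline{(\X - \state(X))}$ — the distribution of $\Q_n$ depends only on the distribution of the centered variable $X - \state(X)$. Moreover, by Proposition \ref{niezaleznoscPopa} and formula \eqref{rownaniexplusa}, the even cumulants $K_{2r}$ of $X$ and of $X - \state(X)$ agree only when $\state(X)=0$; however, "even Poisson" is defined purely in terms of $K_{2r}$, and one checks that if $X$ is even Poisson then necessarily $\state(X) = K_1(X)$ must be handled — here I would simply note that the cancellation formula only involves $K_{2r}$ of the centered variable, and that $X$ being even Poisson forces a specific value of $K_1$, so the cleanest route is to apply Corollary \ref{wniosekkumulanty} to the centered copies $X_i - \state(X)$ and observe that the even cumulants appearing there are those of $X-\state(X)$; one then argues that $\Q_n \sim \nu(n,1-\tfrac1n)$ forces, via the odd-cumulant cancellation, all odd cumulants of $X$ to be irrelevant and all even cumulants of the centered variable to equal $1$.

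The main obstacle, then, is not the algebra — which is essentially immediate once Corollary \ref{wniosekkumulanty} is in hand — but rather the bookkeeping around centering and around the precise meaning of "even Poisson" versus the constraint on $K_1$. I expect the cleanest writeup to (i) invoke the shift-invariance of $\Q_n$ to pass to centered variables, (ii) apply Corollary \ref{wniosekkumulanty} to get $K_r(\Q_n) = n(1-\tfrac1n)^r K_{2r}(X_c)$ where $X_c = X - \state(X)$, (iii) recall $K_r(\nu(\lambda,\alpha)) = \lambda\alpha^r$ so that the Boolean Poisson $\nu(n,1-\tfrac1n)$ has $K_r = n(1-\tfrac1n)^r$, (iv) equate and cancel the nonzero prefactor to conclude $K_{2r}(X_c)=1$ for all $r$, which is the definition of even Poisson, and finally (v) note that the converse direction is the same chain of equalities read backwards. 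I would also remark that the hypothesis $n\ge 2$ is implicitly needed for $1-\tfrac1n \ne 0$; for $n=1$ the statement degenerates since $\Q_1 = 0$.
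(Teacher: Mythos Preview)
Your core approach is exactly the paper's: invoke Corollary~\ref{wniosekkumulanty} to get $K_r(\Q_n)=n(1-\tfrac1n)^rK_{2r}(X)$, compare with the Boolean Poisson cumulants $K_r(\nu(n,1-\tfrac1n))=n(1-\tfrac1n)^r$, and cancel.

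However, your detour about centering is both unnecessary and confused. Corollary~\ref{wniosekkumulanty} does \emph{not} assume $\state(X)=0$; the only hypothesis inherited from Proposition~\ref{lem:oddcancellation} is $\state(L)=0$, and for $L=X_1-\overline{X}$ this holds automatically whatever the mean of $X$ is. So the formula $K_r(\Q_n)=n(1-\tfrac1n)^rK_{2r}(X)$ already applies to the original $X$ and the whole discussion of passing to $X_c=X-\state(X)$ should be dropped. Worse, your own observation via \eqref{rownaniexplusa} shows that $K_{2r}(X)\neq K_{2r}(X_c)$ in general in Boolean probability, so centering would change the very quantities you want to identify; the argument as you sketched it in step~(iv) would conclude $K_{2r}(X_c)=1$, which is not the same as $X$ being even Poisson. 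Simply delete the centering paragraph and the proof is the paper's proof verbatim.
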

\begin{proof}
Let  $\X_1,\dots,\X_n$ be Boolean copies of a fixed random variable $X$.
In this case Corollary~\ref{wniosekkumulanty} and the assumption of Theorem \ref{twr:OddElement}  imply that
$$
K_r(Q_n) = n
\left(
1-\frac{1}{n}
\right)^r 
K_{2r}(X)= n
\left(
1-\frac{1}{n}
\right)^r.
$$
From this we infer that
$K_{2r}(X)=1$.
Conversely, 
suppose that $\X_i$'s are even  Poisson, then from  Corollary~\ref{wniosekkumulanty}
 we get 
$ K_r(\Q_n) 
  = n
\left(
1-\frac{1}{n}
\right)^r
$.

\end{proof}

\section{Limit theorems  for quadratic forms}

\subsection{A general  Limit Theorem}



In this section we consider  limit theorems for the sums of 
quadratic forms of the following type.
\begin{theo}
  \label{thm:quadraticCLT}
  Let $A_n = [a_{i,j}^{(n)}]\in M_n^{sa}(\C)$ with zero diagonal,  uniformly bounded entries i.e. 
  $\sup_{i,j,n} \bigabs{a_{i,j}^{(n)}}<\infty$ and such that the matrix
  $\frac{1}{n}A_n$ has limit distribution $\mu$
  with respect to the state $\omega$. 
  Let $X_1,\dots, X_n$ be  identically distributed Boolean independent random variables,  with mean $\state(X_i)=\frac{1}{\sqrt{n}}$  and variance equal 1.  Additionally, let Boolean cumulants satisfy the product formula of Theorem~\ref{thm:krawczyk}. Then
  the sequence of quadratic forms
\begin{align*}
  Q_n  &= \frac{1}{n}\sum_{\substack{i,j=1\\ i\neq j} }^n a_{i,j}^{(n)} X_iX_j
\intertext{converges in distribution to $Y$, where}  
  K_r(Y) &=  \int
  t^r \dd{\mu(t)}.
\end{align*}
\end{theo}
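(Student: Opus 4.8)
The plan is to compute the limit of $K_r(Q_n)$ directly from the cumulant formula for quadratic forms, Proposition~\ref{prop:CykliczneVariancja}, and to show that all contributions except the one coming from the maximal partition $\hat 1_{2r}$ vanish in the limit, while the surviving term converges to $\int t^r\,\dd\mu(t)$. Since convergence of all cumulants is equivalent to convergence in distribution (Section~\ref{sssec:CLT}), this suffices. First I would write $Q_n = \frac1n T_n$ with $T_n = \sum_{i\neq j} a_{i,j}^{(n)} X_i X_j$; note that since $A_n$ has zero diagonal we may equally sum over all $i,j$, so $T_n$ is exactly the quadratic form of Proposition~\ref{prop:CykliczneVariancja} applied to the matrix $A_n$. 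The hypotheses guarantee the $X_i$ are identically distributed (in the sense of Section~\ref{ssec:tangentnumbers}'s predecessor, i.e.\ uniformly bounded cumulants) and Boolean independent with $K_1(X_i)=\state(X_i)=\frac1{\sqrt n}$, $K_2(X_i)=1$, and $|K_s(X_i)|<C_s$; crucially $K_1(X_i)^2 = \frac1n$.

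Next I would invoke formula~\eqref{eq:kumulantsamplevariance2}: by multilinearity in the scale factor $\frac1n$,
\begin{align*}
  K_r(Q_n) = \frac{1}{n^r} K_r(T_n)
  = \frac{1}{n^r}\sum_{\pi\in\NC(r+1)} \Tr\bigl(\EB[\overline{\Krewl\pi}](\J,A_n,\dots,A_n)\bigr) K_{\hat\pi}(X),
\end{align*}
where $\hat\pi$ is the image of $\pi$ under the bijection of Lemma~\ref{lemm:singelton}. The key combinatorial point, exactly as in the proof of Theorem~\ref{lem:equvalencecancellation}, is that $\hat\pi$ has a singleton block among its \emph{even-indexed} slots precisely for those $\pi$ other than $\hat 0_{r+1}$ (the all-singletons partition); more precisely, for $\pi=\hat 0_{r+1}$ one gets $K_{\hat\pi}(X) = K_1(X)^2 K_2(X)^{r-1} = n^{-1}$ times the trace term $\Tr(\J A_n^r)$, whereas every other $\pi$ contributes a product $K_{\hat\pi}(X)$ in which each block of $\pi$ of size $|B|$ produces a cumulant $K_{2|B|-?}(X)$ — the essential bookkeeping being that the number of factors $K_1(X)=n^{-1/2}$ appearing is $2\#\Sing(\pi)$ where singletons of $\pi$ give $K_1^2 = n^{-1}$, while bigger blocks give $O(1)$ cumulants. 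I would make this precise by tracking, for each $\pi\in\NC(r+1)$, the exact power of $n^{-1/2}$ in $K_{\hat\pi}(X)$ together with the order of magnitude of $\Tr(\EB[\overline{\Krewl\pi}](\J,A_n,\dots,A_n))$, which is at most $O(n)$ since each Hadamard/diagonal block restricts a free index and the outer trace against $\J = nP_n$ contributes one factor $n$ in total (by the same mechanism as $\Tr(\J E_{i_0}A E_{i_1}\cdots A E_{i_r})$ summed over a constrained index set).

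For the leading term: $\frac{1}{n^r}\cdot n^{-1}\cdot\Tr(\J A_n^r) = \frac{1}{n^r}\Tr(\J A_n^r)\cdot n^{-1}$ — wait, more carefully, $K_r(Q_n)$'s leading piece is $\frac{1}{n^r}\Tr(\J A_n^r)\,K_1(X)^2 K_2(X)^{r-1} = \frac{1}{n^r}\Tr(\J A_n^r)\cdot\frac1n = \Tr\bigl(P_n (\tfrac1n A_n)^r\bigr) = \omega\bigl((\tfrac1n A_n)^r\bigr)$, using $\J = n P_n$ and $P_n = \frac1n J_n$; by hypothesis this converges to $\int t^r\,\dd\mu(t)$. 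The main obstacle, and the step I would spend the most care on, is the \emph{error estimate}: showing that for every $\pi\neq\hat 0_{r+1}$ the quantity $\frac{1}{n^r}\Tr(\EB[\overline{\Krewl\pi}](\J,A_n,\dots,A_n))K_{\hat\pi}(X) \to 0$. Here one combines (i) the uniform bound $\sup|a_{i,j}^{(n)}|<\infty$, which gives $|\Tr(\EB[\overline{\Krewl\pi}](\J,A_n,\dots,A_n))| = O(n^{1+s(\pi)})$ where $s(\pi)$ counts the free summation indices opened up beyond the forced ones (bounded by $\#\pi$-dependent combinatorics), with (ii) the factor $K_{\hat\pi}(X)$ carrying a compensating power $n^{-\#\Sing(\pi)}$ from the $K_1$'s and uniform boundedness of the higher cumulants; a careful count shows the total power of $n$ is strictly negative unless $\pi$ is all-singletons. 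This is the analogue of the genus/leading-order estimate in the free case \cite{EjsmontLehner:2017}, and reducing it to the clean statement "$\pi\neq\hat 0_{r+1}$ loses at least one power of $n$" is the crux of the argument; once that is in hand, summing the finitely many $\pi\in\NC(r+1)$ finishes the proof.
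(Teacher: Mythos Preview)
Your overall strategy matches the paper's: expand $K_r(Q_n)$ via Proposition~\ref{prop:CykliczneVariancja}, isolate the term $\pi=\hat 0_{r+1}$, which yields $\omega\bigl((\tfrac1n A_n)^r\bigr)\to\int t^r\,\dd\mu$, and show that every other $\pi$ contributes $o(1)$. Your identification and computation of the leading term are correct.

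The error estimate, however, is set up with the wrong bookkeeping, and this is exactly the step you flag as ``the crux''. The power of $n^{-1/2}$ in $K_{\hat\pi}(X)$ is governed by $\#\Sing(\hat\pi)$, not $\#\Sing(\pi)$: by Corollary~\ref{wniosekdolematu} only the first and last blocks of $\hat\pi$ can be singletons (at positions $1$ and $2r$), so $\#\Sing(\hat\pi)\in\{0,1,2\}$; an \emph{interior} singleton of $\pi$ corresponds to a pair block of $\hat\pi$ and contributes $K_2(X)=1$, not $K_1(X)^2$. Your sentence about ``a singleton block among its even-indexed slots'' is therefore wrong, and the compensating power you wrote as $n^{-\#\Sing(\pi)}$ does not match what actually appears. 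The paper avoids this vagueness by working with the index form~\eqref{eq:kumulantsamplevariance} rather than the Hadamard form~\eqref{eq:kumulantsamplevariance2}: the sum over $\ker\underline i\ge\pi$ has at most $n^{\#\pi}$ terms, each bounded by $C^r$, and $|K_{\hat\pi}(X)|\le C'\, n^{-\frac12\#\Sing(\hat\pi)}$; the exponent $\#\pi-r-\tfrac12\#\Sing(\hat\pi)$ is then checked to be strictly negative for $\pi\neq\hat 0_{r+1}$ by a short case analysis on $\#\Sing(\hat\pi)\in\{0,1,2\}$ using the block-size constraints from Corollary~\ref{wniosekdolematu}. One boundary case your outline also misses: when $r=1$ and $\pi=\hat 1_2$ the exponent is zero, not negative, and it is precisely here that the zero-diagonal hypothesis is needed (it kills $\sum_i a_{i,i}$). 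Finally, a terminological slip: the surviving $\hat\pi$ is not $\hat 1_{2r}$ but the partition $\{(1),(2,3),\dots,(2r-2,2r-1),(2r)\}$.
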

\begin{Rem}
Note that assumption  $a_{i,i}=0$  plays a crucial
role in the proof of Theorem \ref{thm:quadraticCLT}, which is in opposition to \cite[Theorem 3.1]{EjsmontLehner:2020:tangent}. We may add diagonal elements but then we need to assume the existence of a limit  of the first moment, such as  
$$\lim_{n\to \infty}K_1(Q_n)=\lim_{n\to \infty}\frac{1}{n}\left(\sum_{i, j}a_{i,j}/n+\sum_{i}a_{i,i}\right). 
    $$
We do not do this because our main example does not have diagonal elements. 
\end{Rem}

\begin{proof}
 The random variables $X_1,\dots, X_n$ have the same distribution (which depends on $n$), so we can use the product formula from Proposition \ref{prop:CykliczneVariancja}. Then 
  \begin{align*}
    K_r(Q_n)
            &=\frac{1}{n^{r}}
                \sum_{ \pi\in \NC(r+1)}
      \left(\sum_{\ker\underline{i}\geq \pi}\Tr(\J E_{i_0} AE_{i_1}\dots AE_{i_r})\right) K_{\hat\pi}(X).
  \end{align*}
 where   $\hat\pi$ is the image of $\pi\in\NC(r+1)$ under the bijection introduced in Lemma~\ref{lemm:singelton}.
 This in turn implies that there
  are only $n^{\#{\pi}}$ allowed choices of indices $\underline{i}$
  and we have the following estimate
  \begin{align} \label{eq:estymacja}
\begin{split}    
    \abs{\frac{1}{n^{r}}
       \left(\sum_{\ker\underline{i}\geq \pi}\Tr(\J E_{i_0} AE_{i_1}\dots AE_{i_r})\right)
         K_{\hat \pi}(X)}
    &\leq 
      n^{\#{\pi}-r} C^r
      \abs{K_{\hat \pi}(X)}
\\    
      &=
      n^{\#{\pi}-r-\frac{1}{2}\#\Sing(\hat \pi)} C^r
      \abs{K_{\hat \pi\setminus \Sing(\hat \pi) } (X)}
      \end{split}
  \end{align}
  where $C=\sup_{i,j,n} \bigabs{a_{i,j}^{(n)}}$ and in the last line we use the assumption $\state(X_i)=\frac{1}{\sqrt{n}}$.  Notation $\hat \pi\setminus \Sing(\hat \pi)$ means the difference of two sets $\hat \pi$ and  $\Sing(\hat \pi)$.
 By Lemma \ref{lemm:singelton}, we have
  $$
\#{\pi} = \left\{ \begin{array}{ll}
1 & \textrm{if $\hat \pi$ does not  have odd blocks, namely } \hat \pi=,\\
\leq r-1 & \textrm{if $\hat \pi$ does not  have  singletons,}\\
\leq r & \textrm{if $\hat \pi$  has at most one singleton,}\\
\leq r+1 & \textrm{if $\hat \pi$ has two singletons.}
\end{array} \right.
$$  
\noindent
Figure \ref{fig:partitionspi} presents the examples of partitions of $\pi$ and $\hat{\pi}$.
 
 \begin{figure}[h]
\begin{subfigure}[t]{0.3\textwidth}
 
\hspace{7em}  (a)

\vspace{1em} 

 \centering 
 
\begin{picture}(44,3.5)(1,0)
  \put(2,0){\line(0,1){4.5}}
  \put(8,0){\line(0,1){4.5}}
  \put(14,0){\line(0,1){4.5}}
  \put(20,0){\line(0,1){4.5}}
\put(21.5,0){$\cdots$}
  \put(38,0){\line(0,1){4.5}}
  \put(44,0){\line(0,1){4.5}}
  \put(50,0){\line(0,1){4.5}}
  \put(56,0){\line(0,1){4.5}}
    \put(62,0){\line(0,1){4.5}}
  \put(68,0){\line(0,1){4.5}}
  \put(70,0){$\cdots$}
    \put(86,0){\line(0,1){4.5}}
  \put(92,0){\line(0,1){4.5}}
    \put(98,0){\line(0,1){4.5}}
  \put(104,0){\line(0,1){4.5}}
  
  \put(2,4.5){\line(1,0){6}}
  \put(14,4.5){\line(1,0){6}}
  \put(38,4.5){\line(1,0){6}}
 \put(50,4.5){\line(1,0){6}}
  \put(62,4.5){\line(1,0){6}}
  \put(86,4.5){\line(1,0){6}}
  \put(98,4.5){\line(1,0){6}}

    \put(2,8){\line(0,1){4.5}}
    \put(8,8){\line(0,1){4.5}}
  \put(14,8){\line(0,1){4.5}}
  \put(20,8){\line(0,1){4.5}}
  \put(26,8){\line(0,1){4.5}}
  \put(28,6){$\cdots$}
  
  \put(44,8){\line(0,1){4.5}}
  \put(50,8){\line(0,1){4.5}}
  \put(56,8){\line(0,1){4.5}}
  \put(62,8){\line(0,1){4.5}}
  \put(64,6){$\cdots$}
  \put(80,8){\line(0,1){4.5}}
  \put(86,8){\line(0,1){4.5}}
  \put(92,8){\line(0,1){4.5}}
  \put(98,8){\line(0,1){4.5}}
  \put(104,8){\line(0,1){4.5}}

  \put(2,12.5){\line(1,0){6}}
  \put(8,12.5){\line(1,0){6}}
  \put(20,12.5){\line(1,0){6}}
  \put(44,12.5){\line(1,0){6}}
  \put(56,12.5){\line(1,0){6}}
   \put(80,12.5){\line(1,0){6}}
  \put(92,12.5){\line(1,0){6}}
    \put(98,12.5){\line(1,0){6}}

\end{picture}

 \end{subfigure}
~
  \begin{subfigure}[t]{0.3\textwidth}
\hspace{7em}  (b)
 
\vspace{1em}  
 \centering
\begin{picture}(44,3.5)(1,0)
  \put(2,0){\line(0,1){4.5}}
  \put(8,0){\line(0,1){4.5}}
  \put(14,0){\line(0,1){4.5}}
  \put(20,0){\line(0,1){4.5}}
\put(21.5,0){$\cdots$}
  \put(38,0){\line(0,1){4.5}}
  \put(44,0){\line(0,1){4.5}}
  \put(50,0){\line(0,1){4.5}}
  \put(56,0){\line(0,1){4.5}}
    \put(62,0){\line(0,1){4.5}}
  \put(68,0){\line(0,1){4.5}}
  \put(70,0){$\cdots$}
    \put(86,0){\line(0,1){4.5}}
  \put(92,0){\line(0,1){4.5}}
    \put(98,0){\line(0,1){4.5}}
  \put(104,0){\line(0,1){4.5}}
  
  \put(2,4.5){\line(1,0){6}}
  \put(14,4.5){\line(1,0){6}}
  \put(38,4.5){\line(1,0){6}}
 \put(50,4.5){\line(1,0){6}}
  \put(62,4.5){\line(1,0){6}}
  \put(86,4.5){\line(1,0){6}}
  \put(98,4.5){\line(1,0){6}}

    \put(2,8){\line(0,1){4.5}}
    \put(8,8){\line(0,1){4.5}}
  \put(14,8){\line(0,1){4.5}}
  \put(20,8){\line(0,1){4.5}}
  \put(26,8){\line(0,1){4.5}}
  \put(28,6){$\cdots$}
  
  \put(44,8){\line(0,1){4.5}}
  \put(50,8){\line(0,1){4.5}}
  \put(56,8){\line(0,1){4.5}}
  \put(62,8){\line(0,1){4.5}}
  \put(64,6){$\cdots$}
  \put(80,8){\line(0,1){4.5}}
  \put(86,8){\line(0,1){4.5}}
  \put(92,8){\line(0,1){4.5}}
  \put(98,8){\line(0,1){4.5}}
  \put(104,8){\line(0,1){4.5}}

  \put(8,12.5){\line(1,0){6}}
  \put(20,12.5){\line(1,0){6}}
  \put(44,12.5){\line(1,0){6}}
  \put(56,12.5){\line(1,0){6}}
   \put(80,12.5){\line(1,0){6}}
  \put(92,12.5){\line(1,0){6}}
    \put(98,12.5){\line(1,0){6}}

\end{picture}

 \end{subfigure}
 ~
  \begin{subfigure}[t]{0.3\textwidth}
\hspace{7em}  (c)
 
\vspace{1em}  
 \centering
\begin{picture}(44,3.5)(1,0)
  \put(2,0){\line(0,1){4.5}}
  \put(8,0){\line(0,1){4.5}}
  \put(14,0){\line(0,1){4.5}}
  \put(20,0){\line(0,1){4.5}}
\put(21.5,0){$\cdots$}
  \put(38,0){\line(0,1){4.5}}
  \put(44,0){\line(0,1){4.5}}
  \put(50,0){\line(0,1){4.5}}
  \put(56,0){\line(0,1){4.5}}
    \put(62,0){\line(0,1){4.5}}
  \put(68,0){\line(0,1){4.5}}
  \put(70,0){$\cdots$}
    \put(86,0){\line(0,1){4.5}}
  \put(92,0){\line(0,1){4.5}}
    \put(98,0){\line(0,1){4.5}}
  \put(104,0){\line(0,1){4.5}}
  
  \put(2,4.5){\line(1,0){6}}
  \put(14,4.5){\line(1,0){6}}
  \put(38,4.5){\line(1,0){6}}
 \put(50,4.5){\line(1,0){6}}
  \put(62,4.5){\line(1,0){6}}
  \put(86,4.5){\line(1,0){6}}
  \put(98,4.5){\line(1,0){6}}

    \put(2,8){\line(0,1){4.5}}
    \put(8,8){\line(0,1){4.5}}
  \put(14,8){\line(0,1){4.5}}
  \put(20,8){\line(0,1){4.5}}
  \put(26,8){\line(0,1){4.5}}
  \put(28,6){$\cdots$}
  
  \put(44,8){\line(0,1){4.5}}
  \put(50,8){\line(0,1){4.5}}
  \put(56,8){\line(0,1){4.5}}
  \put(62,8){\line(0,1){4.5}}
  \put(64,6){$\cdots$}
  \put(80,8){\line(0,1){4.5}}
  \put(86,8){\line(0,1){4.5}}
  \put(92,8){\line(0,1){4.5}}
  \put(98,8){\line(0,1){4.5}}
  \put(104,8){\line(0,1){4.5}}

  \put(8,12.5){\line(1,0){6}}
  \put(20,12.5){\line(1,0){6}}
  \put(44,12.5){\line(1,0){6}}
  \put(56,12.5){\line(1,0){6}}
   \put(80,12.5){\line(1,0){6}}
  \put(92,12.5){\line(1,0){6}}

\end{picture}

 \end{subfigure}
\caption{Examples of partitions of $\pi$ and $\hat{\pi}$: a) $\hat{\pi}$ doesn't have singletons, \\b) $\hat{\pi}$ has at most one singleton and c) $\hat{\pi}$ has two singletons.}
\label{fig:partitionspi}
\end{figure}
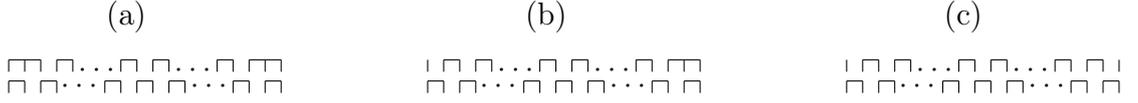
\noindent
 We see that  the  factor $n^{\#{\pi}-r-\frac{1}{2}\#\Sing(\hat \pi)}$  converges to zero whenever $\hat\pi\neq 
\begin{picture}(56,6.5)(1,0)
  \put(2,0){\line(0,1){4.5}}
  \put(8,0){\line(0,1){4.5}}
  \put(14,0){\line(0,1){4.5}}
  \put(20,0){\line(0,1){4.5}}
  \put(26,0){\line(0,1){4.5}}
  \put(44,0){\line(0,1){4.5}}
  \put(50,0){\line(0,1){4.5}}
  \put(56,0){\line(0,1){4.5}}
\put(28.0,0){$\cdots$}
  \put(8,4.5){\line(1,0){6}}
  \put(20,4.5){\line(1,0){6}}
  \put(44,4.5){\line(1,0){6}}
\end{picture} $   (as well as expression \eqref{eq:estymacja} tends to zero) as $n\to\infty$.
 The cases   $r=1$ and $\# \Sing(\hat \pi)=0$ should be considered separately, because  then  we have one more contributing term $\pi=(1,2)$, but since we assume that the matrices $A_n$  have  zero diagonal, we may skip this case. 
  On the other hand, $\#{\pi}=r+1$ if and only if $\pi$  is the singleton partition, equivalently $\hat\pi= 
\begin{picture}(56,6.5)(1,0)
  \put(2,0){\line(0,1){4.5}}
  \put(8,0){\line(0,1){4.5}}
  \put(14,0){\line(0,1){4.5}}
  \put(20,0){\line(0,1){4.5}}
  \put(26,0){\line(0,1){4.5}}
  \put(44,0){\line(0,1){4.5}}
  \put(50,0){\line(0,1){4.5}}
  \put(56,0){\line(0,1){4.5}}
\put(28.0,0){$\cdots$}
  \put(8,4.5){\line(1,0){6}}
  \put(20,4.5){\line(1,0){6}}
  \put(44,4.5){\line(1,0){6}}
\end{picture}$ and finally, by Corollary  \ref{eq:semisirgular}, we have
\begin{align*}
  K_r(Q_n)  
    &= \frac{1}{n^{r+1}}\Tr(J_n A_n^r) + \mathcal{O}(1/n) 
  \\&= \omega( \frac{1}{n^{r}}A_n^r) + \mathcal{O}(1/n) \xrightarrow[n\to\infty]{} \int
  t^r \dd{\mu(t)}.
\end{align*}
\end{proof}

\subsection{Generating functions}
\label{sec:genfunc}
In this subsection we compute  the  moment generating function of the matrix $a\P+b B_n$
with respect to the state
$\omega$, that is
\begin{align*}
F_{a\P+ b B_n}(z) &= \omega((I-z(a\P+b B_n))^{-1})= \Tr(\P (I-z(a \P+b B_n))^{-1}),\quad a,b\in \R .
\end{align*}
\noindent
In \cite[Lemma 4.3 and Lemma 4.4]{EjsmontLehner:2019:cotangent}, 
the moment generating function $F_{B_n}(z)$ was computed using the cyclic Boolean convolution 
and it is equal to 
\begin{align}\label{eq:Fcotanges}
 F_{B_n}(z)=\frac{\tan(n\arctan \frac{z}{n})}{z}.
\end{align}
Now we compute it  in a more general case.
\begin{lemm}\label{lemm:funkcjagrnerujaca}
  The moment generating function of $a\P+ b B_n$ is equal to
  \begin{align}
  \begin{split}
F_{a\P+ b B_n}(z) =\frac{{\tan(n\arctan \frac{bz}{n})}}{bz-az{\tan(n\arctan \frac{bz}{n})}}.
\end{split}  
  \end{align}
\end{lemm}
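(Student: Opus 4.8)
The plan is to exploit that $\omega$ is the vector state associated with the unit vector $\xi=\frac{1}{\sqrt n}(1,\dots,1)^T$, i.e.\ $\P=\xi\xi^T$ and $\omega(C)=\xi^T C\xi$, and that the summand $a\P=a\,\xi\xi^T$ in $a\P+bB_n$ is a rank-one operator pointing exactly along this state vector. I work throughout with formal power series in $z$, so invertibility near $z=0$ is automatic and $B_n$ needs no further properties. Put $M:=I-bzB_n$; then
\begin{align*}
  F_{a\P+bB_n}(z)=\omega\bigl((I-z(a\P+bB_n))^{-1}\bigr)=\omega\bigl((M-az\,\xi\xi^T)^{-1}\bigr).
\end{align*}
The Sherman--Morrison rank-one update identity gives
\begin{align*}
  (M-az\,\xi\xi^T)^{-1}=M^{-1}+\frac{az\,M^{-1}\xi\,\xi^T M^{-1}}{1-az\,\xi^T M^{-1}\xi}.
\end{align*}
Pairing both sides with $\xi$ and abbreviating $g:=\xi^T M^{-1}\xi=\omega\bigl((I-bzB_n)^{-1}\bigr)=F_{bB_n}(z)$, the right-hand side collapses to
\begin{align*}
  F_{a\P+bB_n}(z)=g+\frac{az\,g^{2}}{1-az\,g}=\frac{g}{1-az\,g}.
\end{align*}
(This is just the statement that, with respect to $\omega$, the distribution of $a\P+bB_n$ is the Boolean convolution $\mu_{bB_n}\uplus\delta_a$: the last equality is equivalent to $H_{a\P+bB_n}(z)=H_{bB_n}(z)+az$.)

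It remains to insert the known value of $g$. Since each moment $\omega(B_n^k)$ is a scalar,
\begin{align*}
  F_{bB_n}(z)=\sum_{k\ge 0}\omega\bigl((bB_n)^k\bigr)z^k=\sum_{k\ge 0}\omega(B_n^k)(bz)^k=F_{B_n}(bz),
\end{align*}
and by \eqref{eq:Fcotanges} this equals $\frac{\tan(n\arctan\frac{bz}{n})}{bz}$. Substituting $g=\frac{\tan(n\arctan\frac{bz}{n})}{bz}$ into $\frac{g}{1-az\,g}$ and multiplying numerator and denominator by $bz$ yields exactly
\begin{align*}
  F_{a\P+bB_n}(z)=\frac{\tan\bigl(n\arctan\frac{bz}{n}\bigr)}{bz-az\tan\bigl(n\arctan\frac{bz}{n}\bigr)},
\end{align*}
which is the assertion.

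There is essentially no analytic difficulty; the whole content is the structural observation that $a\P$ is a rank-one perturbation along the cyclic vector $\xi$, after which the proof is the one-line Sherman--Morrison computation followed by the substitution of \eqref{eq:Fcotanges}. The only thing to watch is the bookkeeping of the scalars: the factor $b$ gets absorbed into the argument $bz$ of $F_{B_n}$, while $a$ survives as the outer Boolean-convolution-by-$\delta_a$ contribution, which is why $a$ and $b$ enter the final expression so asymmetrically.
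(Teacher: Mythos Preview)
Your argument is correct. The key step---extracting the rank-one summand $a\P=a\,\xi\xi^T$ and applying the Sherman--Morrison identity to $(M-az\,\xi\xi^T)^{-1}$ with $M=I-bzB_n$---is valid in the ring of formal power series in $z$ because $M$ and $1-az\,\xi^TM^{-1}\xi$ both have invertible constant term. The collapse to $F_{a\P+bB_n}(z)=g/(1-azg)$ with $g=F_{B_n}(bz)$ is exactly right, and the final substitution of \eqref{eq:Fcotanges} is routine.

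The paper's route is different in method though it lands on the same intermediate identity $F_{a\P+bB_n}(z)=\dfrac{F_{B_n}(bz)}{1-az\,F_{B_n}(bz)}$. There one expands $\Tr\bigl(\P(a\P+bB_n)^m\bigr)$ word by word, using the rank-one factorization $\Tr(\P B_n^{l_1}\P\cdots \P B_n^{l_r})=\prod_j\Tr(\P B_n^{l_j})$, groups the resulting words according to the pattern of $\P$'s and $B_n$'s, and then sums several geometric series in $az/(1-az)$ and $F_{B_n}(bz)-1$. Your approach hides all of that combinatorics inside a single resolvent identity: Sherman--Morrison is precisely the closed form of the word expansion the paper carries out by hand. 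What you gain is brevity and a clean structural interpretation (the displayed identity is equivalent to $H_{a\P+bB_n}(z)=H_{bB_n}(z)+az$, i.e.\ a Boolean shift by $\delta_a$ in the state $\omega$); what the paper's expansion buys is that it makes explicit exactly which mixed moments $\Tr(\P B_n^{l_1}\cdots B_n^{l_r})$ contribute, which is occasionally useful elsewhere in the text.
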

\begin{proof}
  Note that $P_n$ is a self-adjoint projection
  of rank $1$. 
  It follows that any mixed moment can be expressed as (see also  \cite[proof of Theorem 3.1]{EjsmontLehner:2019:cotangent})
  \begin{align*}
   \Tr(P_n^{k_1} B_n^{l_1}P_n^{k_2} B_n^{l_2}\dotsm P_n^{k_r}B_n^{l_r})    
    &= \Tr(\P B_n^{l_1}\P B_n^{l_2}\dotsm \P B_n^{l_r})  \\       
    &=\Tr(P_n B_n^{l_1})\Tr(P_n B_n^{l_2})\dotsm \Tr( P_n B_n^{l_r}) . 
  \end{align*}
  The first terms of the power series are easy to calculate, $\omega(a\P+ b B_n)=a\omega(\P)+b\omega(B_n)=a$, and we have 
  \begin{equation}
    \label{eq:MxJnBn.1}
   F_{a\P+ b B_n}(z)=  1 + a  z + \sum_{m\geq 2}     \Tr(\P( a\P+ b  B_n)^m) z^m.
  \end{equation}
 For $m\geq2$ we expand the powers and arrange the resulting words
  according to the last letter:
      \begin{align*}
    \Tr(\P (&a \P+b  B_n)^m) 
    = \Tr\Bigl ( a^m \P +\P( b B_n)^m   \\
    & \phantom{==} + \sum_{\substack{
           k\geq1\\
           p_0\geq 0\\
           p_1,p_2,\dots,p_k\geq 1\\
           q_1,q_2,\dots,q_k\geq 1\\
           p_0+q_1+p_1+\dots+q_k+p_k=m
         }
       }
       \P (b B_n)^{p_0}(a\P)^{q_1}(b B_n)^{p_1}(a\P)^{q_2}(b B_n)^{p_2}\dotsm (a\P)^{q_k}(b B_n)^{p_k}
       \\
    & \phantom{==} + \sum_{\substack{
           k\geq1\\
           q_0\geq 0\\
           p_1,p_2,\dots,p_k\geq 1\\
           q_1,q_2,\dots,q_k\geq 1\\
           q_0+p_1+q_1+\dots+p_k+q_k=m
         }
       }
      \P (a\P)^{q_0} (bB_n)^{p_1}(a\P)^{q_1}(b B_n)^{p_2}(a\P)^{q_2}\dotsm (b B_n)^{p_k} (a\P)^{q_k} 
       \Bigr)
       \\
    &=  \Tr(a^m \P) +\Tr( \P( b B_n)^m) \\
    & \phantom{==} + \sum_{\substack{
           k\geq1\\
           p_0\geq 0\\
           p_1,p_2,\dots,p_k\geq 1\\
           q_1,q_2,\dots,q_k\geq 1\\
           p_0+q_1+p_1+\dots+q_k+p_k=m
         }
       }
       a^{q_1+\dots+q_k}
       \Tr(P_n B_n^{p_0})b^{p_0}\Tr(P_n B_n^{p_1})b^{p_1}\dotsm \Tr(P_n B_n^{p_{k-1}})b^{p_{k-1}}\Tr(P_n B_n^{p_k})b^{p_k}
       \\
    & \phantom{==} + \sum_{\substack{
           k\geq1\\
           q_0\geq 0\\
           p_1,p_2,\dots,p_k\geq 1\\
           q_1,q_2,\dots,q_k\geq 1\\
           q_0+p_1+q_1+\dots+p_k+q_k=m
         }
       }
       a^{q_0+q_1+\dots+q_k}
       \Tr(P_n B_n^{p_1})b^{p_1} \Tr(P_n B_n^{p_2})b^{p_2}\dotsm \Tr(P_n B_n^{p_k})b^{p_k}.
  \end{align*}
\noindent
 Let $\hat{F}_{B_n}(bz)= \frac{az}{1-az}
      (F_{B_n}(bz)-1)$, then  inserting this expansion into \eqref{eq:MxJnBn.1} we obtain
  \begin{align*}
  F_{a\P+ b B_n}(z)
 &= 1 + az + \sum_{m\geq 2} \Tr(\P B_n^m)(bz)^m + \sum_{m\geq2}(az)^m\\
    & \phantom{==}
      + \sum_{k\geq1}
      \left(
        \frac{az}{1-az}\right)^k 
      F_{B_n}(bz)(F_{B_n}(bz)-1)^{k}
      \\
    & \phantom{==} 
      + \frac{1}{1-az}
      \sum_{k\geq1}
      \left(
        \frac{az}{1-az}\right)^{k}
      (F_{B_n}(bz)-1)^{k}
        \\
    &= F_{B_n}(bz) + \frac{az}{1-az}
       + \left(\frac{1}{1-az}+{F}_{B_n}(bz)
      \right)\frac{\hat{F}_{B_n}(bz)}{1-\hat{F}_{B_n}(bz)}
      \\
    &= 
   \left(\frac{1}{1-az}+{F}_{B_n}(bz)
      \right)\frac{1}{1-\hat{F}_{B_n}(bz)}-1
      \\&=\frac{{F}_{B_n}(bz)}{1-az{F}_{B_n}(bz)},
 \intertext{and by inserting the equation  \eqref{eq:Fcotanges} into above expression, we get}
&=\frac{{\tan(n\arctan \frac{bz}{n})}}{bz-az{\tan(n\arctan \frac{bz}{n})}}.
\end{align*}
\end{proof}

\subsection{The  Limit Theorem for commutators and anticommutators}
Finally, we will  illustrate the  limit theorem~\ref{thm:quadraticCLT}
with some interesting computable cases.

\begin{theo}[Boolean generalized tangent law]\label{twr:CLTCommutatorsAniticommutators}
   Let $X_1,\dots, X_n$ be  identically distributed Boolean independent random variables  with mean $\state(X_i)=\frac{1}{\sqrt{n}}$  and variance  1. Then
  the sequence of quadratic forms
  $$Q_n=\frac{1}{n}\sum_{\substack{k,j=1\\ k<j}}^n\big(a(\X_k\X_j+\X_j\X_k)+ib(\X_k\X_j-\X_j\X_k)\big) 
  \xrightarrow{d} Y,\quad a, b\in\IR $$
  where the $H$-transform of the limit distribution has the form 
    \begin{align} \label{eq:uogulnionyrozkaldtangesa}
  \begin{split}
H_{Y}(z)=\frac{1}{z}\frac{\tan(bz)}{b-a\tan(bz)}-1.
\end{split} 
\end{align} 
\end{theo}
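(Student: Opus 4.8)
The plan is to realise $Q_n$ as a quadratic form with self-adjoint, zero-diagonal, uniformly bounded system matrix, apply the general limit theorem~\ref{thm:quadraticCLT}, and identify the limiting generating function with the help of Lemma~\ref{lemm:funkcjagrnerujaca}.

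First I would rewrite the sum: collecting in $\sum_{k<j}\big(a(\X_k\X_j+\X_j\X_k)+ib(\X_k\X_j-\X_j\X_k)\big)$ the coefficient of $\X_k\X_j$ (equal to $a+ib$) and of $\X_j\X_k$ (equal to $a-ib$) for $k<j$, one gets
\[
  Q_n=\frac1n\sum_{\substack{p,q=1\\ p\neq q}}^{n}(A_n)_{p,q}\X_p\X_q,
  \qquad
  A_n=\left[\begin{smallmatrix}0 & a+ib\\ a-ib & 0\end{smallmatrix}\right]_n = n\,(aP_n+bB_n)-aI_n .
\]
Because $a,b\in\R$ one has $\overline{a+ib}=a-ib$, so $A_n\in M_n^{sa}(\C)$; its diagonal vanishes and $\sup_{p,q,n}|(A_n)_{p,q}|<\infty$. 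Hence all hypotheses of Theorem~\ref{thm:quadraticCLT} are in place (the identical distribution, mean $\frac1{\sqrt n}$, variance $1$ and the product formula are assumed in the statement to be proved) as soon as $\frac1n A_n$ is shown to have a limit distribution with respect to $\omega$.

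To get that limit distribution I would write $\frac1n A_n=(aP_n+bB_n)-\frac an I_n$ and use centrality of $I_n$ to expand
\[
  \omega\!\left(\Big(\tfrac1n A_n\Big)^{r}\right)
  =\sum_{k=0}^{r}\binom rk\Big(-\tfrac an\Big)^{k}\,\omega\big((aP_n+bB_n)^{r-k}\big).
\]
By Lemma~\ref{lemm:funkcjagrnerujaca} the moments $\omega\big((aP_n+bB_n)^{m}\big)$ are the Taylor coefficients at $0$ of $F_{aP_n+bB_n}(z)=\dfrac{\tan(n\arctan\frac{bz}{n})}{bz-az\tan(n\arctan\frac{bz}{n})}$; since $n\arctan\frac{bz}{n}\to bz$ coefficientwise, these converge as $n\to\infty$ to the Taylor coefficients of
\[
  F(z):=\frac{\tan(bz)}{bz-az\tan(bz)}=\frac1z\,\frac{\tan(bz)}{b-a\tan(bz)},
\]
an analytic germ at $0$ with $F(0)=1$. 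In particular the moments $\omega\big((aP_n+bB_n)^{m}\big)$ are bounded in $n$, so each $k\geq1$ term above is $O(1/n)$, giving $\lim_{n\to\infty}\omega\big((\tfrac1n A_n)^{r}\big)=[z^r]F(z)=:m_r$. Thus $\frac1n A_n$ has a limit distribution $\mu$ (a linear functional, not necessarily a probability measure, as is permitted in this setting) with $r$-th moment $m_r$.

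Theorem~\ref{thm:quadraticCLT} would then give $Q_n\xrightarrow{d}Y$ with $K_r(Y)=\int t^r\,d\mu(t)=m_r$ for every $r\geq1$, so that
\[
  H_Y(z)=\sum_{r\geq1}K_r(Y)\,z^r=\sum_{r\geq1}m_r z^r=F(z)-m_0=F(z)-1=\frac1z\,\frac{\tan(bz)}{b-a\tan(bz)}-1,
\]
which is \eqref{eq:uogulnionyrozkaldtangesa}. I expect the only subtle point to be the bookkeeping of the diagonal: Theorem~\ref{thm:quadraticCLT} cannot be applied to $n(aP_n+bB_n)$ directly, since that matrix has diagonal $a$, and the argument succeeds only after passing to $A_n=n(aP_n+bB_n)-aI_n$; the key (but elementary) observation is that this diagonal correction shifts the limiting moments only by $O(1/n)$ and hence leaves the limit distribution unchanged.
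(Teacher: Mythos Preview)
Your proposal is correct and follows essentially the same route as the paper: you identify the same system matrix $\frac1n A_n=(aP_n+bB_n)-\frac{a}{n}I_n$, peel off the diagonal correction via the binomial expansion (using centrality of $I_n$), invoke Lemma~\ref{lemm:funkcjagrnerujaca} for the moment generating function of $aP_n+bB_n$, and then apply Theorem~\ref{thm:quadraticCLT}. The only cosmetic difference is that the paper bounds $\bigl|\omega((aP_n+bB_n)^{m})\bigr|$ directly by $(|a|+|b|)^{m}$, whereas you deduce boundedness from the convergence of these moments; both work.
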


\begin{proof}
  The system matrix is $$\frac{1}{n}A_n=
  \frac{1}{n}\left[\begin{smallmatrix}
    0 &a+ib\\
    a-ib& 0
    \end{smallmatrix}\right]_n=\underbrace{a\P+bB_n}_{D_n}-\underbrace{a/n\left[\begin{smallmatrix}
    1 &0\\
    0 &1
    \end{smallmatrix}\right]_n}_{F_n}.$$ 
Asymptotically, the moments  of $\frac{1}{n}A_n$ and $D_n$, are  equal. Indeed,
$$\omega((D_n-F_n)^r)=\omega(D_n^r)+\sum_{i=1}^r \underbrace{{{r}\choose{i}}\left(\frac{-a}{n}\right)^i   \omega(D_n^{r-i})}_{ \xrightarrow{n\to \infty} 0},$$
while the last factor converges to zero because  we can  use a simple estimate    
$$\abs{\omega(D_n^{r-i})}\leq \omega((\abs{a}\P+\abs{b}P_n)^{r-i})=(\abs{a}+\abs{b})^{r-i}.$$
Thus from  Lemma~\ref{lemm:funkcjagrnerujaca} and Theorem \ref{thm:quadraticCLT}
  we have  
  \begin{align*}
  H_{Y}(z) &=\lim_{n\to\infty}  F_{a\P+ b B_n}(z)-1
  \\&=  \lim_{n\to\infty} \frac{\tan(n\arctan \frac{bz}{n})}{zb-az\tan(n\arctan \frac{bz}{n})}-1
\\& =\frac{\tan(bz)}{zb-az\tan(bz)}-1.
\end{align*}
\end{proof}
\begin{Rem}\begin{enumerate}[1.]
\item We call the limit law $\mu_Y$ the Boolean tangent law if 
 $H_{Y}(z) =\frac{\tan z}{z}-1.$ We would like to emphasize that $\frac{\tanh z }{z}$ is a characteristic function of probability measure see \cite[equation (3)]{PitmanYor2003}. 
 \item  Nevanlinna functions are analytic functions having a non-negative imaginary part in the upper half-plane.
We also would like to mention  that the tangent function is  a  fundamental example  of Nevanlinna functions, see
\cite{Arlinskii,Gesztesy,Donoghue:1974:monotone,Steinmetz}. 
\item An important connection between free and Boolean infinite
divisibility was established by  Bercovici and Pata    \cite {BercoviciPata:1999} (see also Belinschi 
 and  Nica \cite{BelinschiNica2008}) in the form of a bijection  from the
class of  probability measure to the class of free   infinitely divisible laws (but also to the class of classical  infinitely divisible).
The easiest
way to define the Boolean  B-P bijection is as follows. Let $\mu$ be a probability measure  having all
moments, and consider its sequence $c_n$ of Boolean cumulants. Then the map $\Lambda$ can be defined
as the mapping that sends $\mu$ to the probability measure on $\R$ with free cumulants $c_n$.
The inverse image of the free tangent law under the Bercovici-Pata bijection has the following
cumulant  function 
$$R(z)=\frac{\tan z-z}{z^2}.$$
This  means that  they are not the same cumulants as in the free version  \cite[Theorem 4.1]{EjsmontLehner:2020:tangent}, because they are shifted $K_r^{\text{Boolean}}=K_{r+1}^{\text{free}}$ (the  Boolean  B-P bijection is not the Boolean tangent law mapped to the free tangent law).
\item In the general case the Boolean cumulants of \eqref{eq:uogulnionyrozkaldtangesa} can be expressed in terms of the generating function of the higher order
tangent numbers as    
$$
  K_r(Y)
  = b^{r}\frac{T_{r+1}(a/b)}{(r+1)!}, \quad r\in \N
  $$ where the polynomials
   ${T_r(x)}$ were defined in Section \ref{ssec:tangentnumbers}. This follows from simple manipulations
using the combinatorics of tangent numbers discussed in Section \ref{ssec:tangentnumbers}.
\item 
 Theorem ~\ref{twr:CLTCommutatorsAniticommutators} in the special case $a=0$ and $b=1$  leads
to another new  fact about the tangent numbers $T_n$, and the Riemann zeta function
\begin{align*}
 T_{2k+1}&=\lim_{n\to\infty}(2k+1)!\Tr (\P B_n ^{2k} ),\text{ } 
    \zeta(2k+2)=\lim_{n\to\infty}\frac{\pi ^{2k+2}\Tr\big(P_n B_n ^{2k}\big)}{ 2(2^{2k+2}-1)}\quad k\geq 0.\end{align*}
    The approximation of
the values of the Riemann zeta function
for even integers is a popular theme,
see \cite{Williams1971,Apostol1973,Cvijovic2003}. 
It is interesting to evaluate  Theorem~\ref{twr:CLTCommutatorsAniticommutators} for
$a=b=1$. 
Indeed, by the identity $\tan z +\sec z =\frac{1+\tan(z/2)}{1-\tan(z/2)}$, we have 
$$
H_Y=\frac{1}{z}\frac{\tan z }{1-\tan z }-1=\frac{1}{z}\frac{\tan (2z) +\sec (2z)-1}{2}- 1
.
$$
This provides a new aproximation of
the Euler zigzag numbers $E_n$, namely 
\begin{align*}
E_{k}=\lim_{n\to\infty}\frac{k!\Tr\big( \P [\P +B_n]^{k-1}\big)}{2^{k-1}}, \quad k\geq 2.
\end{align*}

\end{enumerate}

\end{Rem}

\section{Measure  and L\'evy-Khinchin representation for the tangent laws}
\noindent
In the last section we compute the measure $\mu$ and the  L\'evy-Khinchin representation 
for the Boolean tangent laws. We focus on the special case  $a=0$ and $b=1$ because in this situation we are able to determine the corresponding measures.    In this case the corresponding transforms are given by  

$$M_\mu(z)=\frac{z }{2z-\tan(z)},\quad G_\mu(z)=\frac{1 }{2z-z^2\tan(1/z)} \text{ and }\phi_{\mu}(z)= z^2\tan(1/z)-z.$$ 

\subsection{The  measure of the tangent law}
The Cauchy
transform determines uniquely the measure and there is an inversion formula called Stieltjes
inversion formula, namely 
\begin{equation*}
d\mu(x)=-\frac{1}{\pi}\lim_{\epsilon \to 0^+}\Im G_{\mu}(x+i\epsilon)=-\frac{1}{\pi}\lim_{\epsilon \to
  0^+} \Im \frac{1 }{2(x+i\epsilon)-(x+i\epsilon)^2\tan(1/(x+i\epsilon))} =0
\end{equation*}
for $2/x\neq \tan(1/x)$ and $x\neq 0$. Thus  the  measure $\mu$ has no absolutely continuous part.
In order to determine the atoms, we compute the  limits 
\begin{equation*}
\lim_{\epsilon \to 0^+}i\epsilon G_\mu\left({x+i\epsilon}\right)=0
\end{equation*}
whenever $x= 0$ or $x$ satisfies
\begin{equation} \label{rownietangent}
2/x= \tan(1/x).
\end{equation}
For $x$ as in \eqref{rownietangent}, we get via de L'Hospital's rule
\begin{align*}
\lim_
{\epsilon \to 0^+} &\frac{i\epsilon}{x+i\epsilon} \frac{1}{2-(x+i\epsilon)\tan(1/(x+i\epsilon))}
\\&=\frac{1}{x}\lim_
{\epsilon \to 0^+} \frac{i}{-i\tan(1/(x+i\epsilon))+(x+i\epsilon)\frac{1}{\cot^2(1/(x+i\epsilon))}\frac{i}{(x+i\epsilon)^2}}
\\&=\frac{1}{x}\frac{1}{-\frac{2}{x}+\frac{1}{x\cos^2(1/x)}}=\frac{\cos^2(1/x)}{-2\cos^2(1/x)+1}=\frac{1}{\tan^2(1/x)-1}
\\&=\frac{1}{(2/x)^2-1}=\frac{x^2}{4-x^2}.
\end{align*}
For $x=0$ we have 
\begin{align*}
\lim_
{\epsilon \to 0^+} &\frac{i\epsilon}{i\epsilon} \frac{1}{2-i\epsilon\tan(1/(i\epsilon))}=\lim_
{\epsilon \to 0^+}  \frac{1}{2+\epsilon\tanh(1/\epsilon)}=\frac{1}{2}.
\end{align*}
\noindent
Note that, in general, the singular part  (= discrete part + singular continuous part) of a probability measure $\mu$  is supported on the
set 
$$S=\{x \in \R \mid \lim_{\epsilon \to 0^+}\Im G_{\mu}(x+i\epsilon)=-\infty\}.$$
See \cite[Page 71]{HasebeFranzSchleissinger} or \cite{AronszajnDonoghue1} for more information. After some calculations, it 
can be shown that in the considered case $S= \{x\mid 2/x=\tan(1/x)\}\cup \{0\}$ or it can be concluded from the previous calculation that on this set we have a  non-zero mass. 
Finally, we infer that the  measure $\mu$ is given by
$$
\mu(\{x\}) =
\begin{cases}
\frac{x^2}{4-x^2}& \text{ for }x\in \{x\mid 2/x=\tan(1/x)\},\\
\frac{1}{2} & \text{ for } x=0.
\end{cases}
$$

\begin{Rem}The above measure is positive. 
By elementary analysis of the function  $f(x)=\tan x - 2x$ on interval $[0,\frac{\pi}{2})$ we see that   $f$ is decreasing for $x\in [0,\frac{\pi}{4})$ and increasing for $x\in (\frac{\pi}{4},\frac{\pi}{2})$. We can also observe that $f(0)=0$,
  $f(\frac{\pi}{4})<0$ and  $\lim_{x\to \frac{\pi}{2}^-}f(x)=\infty$. From this we conclude that $\tan x - 2x=0$ has two roots on $[0,\frac{\pi}{2})$,  first is $0$ and the second one is bigger than $\frac{\pi}{4}$. From this we conclude that the  positive roots of  equation \eqref{rownietangent} satisfy $x <\frac{4}{\pi}$. An analysis of the negative part is  analogous and we infer that solution of  equation \eqref{rownietangent} satisfy $\abs x <\frac{4}{\pi}<2. $
\end{Rem}
\noindent
Figure \ref{fig:tan} presents the function $x\text{tan}(1/x)$. We also plot a constant function equal to 2 and mark the roots of the equation $2/x=\text{tan}(1/x).$ More precisely these roots are approximately as follows: $\pm 0.857956, \pm 0.217192, \pm 0.128372, \pm 0.09132$. After inserting values of these roots into equation $\frac{x^2}{4-x^2}$ and summing the results we get the value approximately equal to $1/2$. Thus for the entire space the measure $\mu$ returns 1. From here we can conclude that above measure $\mu(\{x\})$ is a probability measure.

\begin{figure}[h]
    \centering
    \includegraphics[scale=0.6]{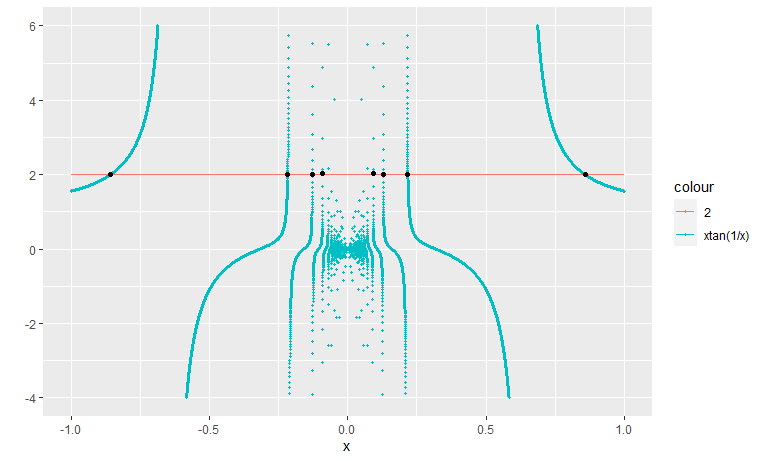}
    \caption{Function $x\text{tan}(1/x)$, constant function equal to 2 and roots of the equation $2/x=\text{tan}(1/x).$}
    \label{fig:tan}
\end{figure}

\subsection{The L\'evy measure of the tangent law}
Now, we will show that   L\'evy measure is given by
$$
\rho(\{x\}) =
\begin{cases}
\frac{x^4}{1+x^2}& \text{ for }x=\frac{2}{n\pi} \text{ with  }n\in\IZ \text{ odd},\\
0 & \text{otherwise.}
\end{cases}
$$
This result will be verified by direct integration  $$\phi_{\mu}(z)= \gamma+\int_{\R}\frac{1+xz}{z-x}\,\dd\rho({x})=z^2\tan(1/z)-z$$
with $\gamma=0.$
We use the well known Euler's partial fraction
expansion of the cotangent function \cite[Ch.~25]{AignerZiegler:2014:BOOK5ed}
\begin{align*}
\cot z &= \frac{1}{z} + \sum_{k=1}^\infty \frac{2z}{z^2-k^2\pi^2}.
\end{align*}
This immediately yields a similar expansion for the tangent function
\begin{align}\label{eq:euler}
\tan \frac{1}{z} = \cot \frac{1}{z}  - 2\cot 2\frac{1}{z} = \sum_{n\in \IN\text{ odd}}\frac{8z}{n^2\pi^2z^2-4}
\end{align}
for $z\neq 0 $ and $z\neq \frac{2}{n\pi}$, $n\in \IN\text{ odd}$.
A direct integration immediately gives
\begin{align*}
\phi_{\mu}(z)&=\int_{\R}\frac{1+xz}{z-x}\,\dd\rho({x})=
  \int_{\R}\left(\frac{1}{z-x}+\frac{x}{1+x^2}\right)(1+x^2)\,\dd\rho({x})
\\&= \sum_{n\in \IZ\text{ odd}} \frac{1}{z-\frac{2}{n\pi}} \frac{16}{n^4\pi^4}+\underbrace{\int_{\R}x\,\dd\rho({x})}_{\text{=0, since~$\rho$ is symmetric}}
\\&= \sum_{n\in \IN\text{ odd}} \frac{32z}{n^2\pi^2z^2-4} \frac{1}{n^2\pi^2}
\\&= \sum_{n\in \IN\text{ odd}}  \left(\frac{8z^3}{n^2\pi^2z^2-4} - \frac{8z}{n^2\pi^2}\right)
\\&=z^2\tan(1/z)-z,
\end{align*}
where in the last line we used the formula \eqref{eq:euler}  and 
$\sum_{n\in \IN\text{ odd}} \frac{1}{n^2}=\frac{\pi^2}{8}$.

\bibliographystyle{amsplain}

\bibliography{sumcomm}
\end{document}